\definecolor{CYAN}{named}{cyan}
\def\newaliasedtheorem#1[#2]#3{
  \newaliascnt{#1@alt}{#2}
  \newtheorem{#1}[#1@alt]{#3}
  \expandafter\newcommand\csname #1@altname\endcsname{#3}
}
\newsavebox{\measure@tikzpicture}
\newcommand{\setword}[2]{%
  \phantomsection
  #1\def\@currentlabel{\unexpanded{#1}}\label{#2}%
}
\renewcommand\labelenumi{(\roman{enumi})}
\renewcommand\theenumi\labelenumi
\newtheorem{theorem}{\bf Theorem}[section]
\newtheorem{remark}[theorem]{\bf{Remark}}
\newtheorem{definition}[theorem]{\bf Definition}
\newtheorem{assumption}[theorem]{\bf Assumption}
\newtheorem{lemma}[theorem]{\bf Lemma}
\newtheorem{proposition}[theorem]{\bf Proposition}
\newtheorem{corollary}[theorem]{\bf Corollary}
\newtheorem{question}[theorem]{\bf Question}
\newtheorem*{theorem*}{Theorem}
\newtheorem*{proposition*}{Proposition}
\newcommand{\e}{{\rm e}}
\newcommand{\eps}{\varepsilon}
\newcommand{\R}{\mathbb R}
\newcommand{\N}{\mathbb N}
\newcommand{\Z}{\mathbb Z}
\newcommand{\T}{\mathbb T}
\newcommand{\diam}{\operatorname{diam}}
\newcommand{\initial}{\operatorname{in}}
\newcommand{\id}{\operatorname{id}}
\newcommand{\BV}{\operatorname{BV}}
\newcommand{\Leb}{\mathcal{L}}
\newcommand{\Borel}{\mathcal{B}}
\newcommand{\Haus}{\mathcal{H}}
\newcommand{\metric}{\mathsf{d}}
\newcommand{\mix}{\operatorname{mix}}
\DeclareMathOperator{\loc}{loc}
\DeclareMathOperator{\ap}{ap}
\newcommand{\apD}{\ap\nabla}
\DeclareMathOperator{\GL}{GL}
\numberwithin{equation}{section}
\title[Lyapunov exponents, entropy and mixing for DiPerna-Lions flows]{Lyapunov exponents, entropy and mixing for DiPerna-Lions flows}
\author[Elia Bru\'e]{Elia Bru\'e}
\address{Bocconi University, Department of Decision Sciences, Via Serfatti 25, 20136 Milano, Italy}
\email{elia.brue@unibocconi.it}
\author[Maria Colombo]{Maria Colombo}
\address{Institute of Mathematics, EPFL, Station 8, 1015 Lausanne, Switzerland}
\email{maria.colombo@epfl.ch}
\author[Carl J. P. Johansson]{Carl Johan Peter Johansson}
\address{Institute of Mathematics, EPFL, Station 8, 1015 Lausanne, Switzerland}
\email{carl.johansson@epfl.ch}
\begin{document}

\begin{abstract}
 { 
The main goal of this work is to establish an asymptotic form of Bressan’s mixing conjecture. To this end, we develop an ergodic-theoretic framework for incompressible DiPerna–Lions flows. Lyapunov exponents are defined via an Oseledets-type decomposition and related to metric entropy through a version of Ruelle’s inequality in this low-regularity setting. These tools yield sharp bounds on asymptotic regularity propagation and mixing rates, leading to our main result.

}
\end{abstract}

\maketitle

\tableofcontents

\section{Introduction}

The objective of this paper is to study {\it lower bounds on mixing efficiency} in connection with Bressan mixing conjecture~\cite{AB03}. To this end, we start by developing classical concepts and results from ergodic theory and dynamical systems in the DiPerna-Lions framework~\cites{DPL89,A04}, focusing on {\it flows generated by Sobolev-regular velocity fields}, which is the natural class to consider in this setting. Specifically, we consider time-periodic, incompressible velocity fields with Sobolev regularity on the $d$-dimensional torus:
\begin{equation}\label{eq:vectorintro}
    b \in L^1_{\text{loc}}(\mathbb{R}_+; W^{1,p}(\mathbb{T}^d; \mathbb{R}^d)) \text{ for some } p \geq 1, \quad \text{div}\, b = 0 \, .
\end{equation}
Without loss of generality, we assume the time period is $\tau = 1$, i.e., $b(t + 1, x) = b(t, x)$ for almost every $t \in \mathbb{R}_+$ and $x \in \mathbb{T}^d$ with $d\ge 2$. Since the pioneering works~\cite{DPL89,A04}, the theory has evolved into a vibrant research field with significant applications to other domains~\cite{DBPEJ18, LAMCAF17, SBPB20}. 

\subsection{The Ergodic-Theoretic Perspective} 
In this work, we adopt an ergodic-theoretic perspective to investigate the dynamical properties of \textit{Regular Lagrangian flows}, 
\begin{equation}
\begin{cases}
    \frac{d}{dt} X_{t}(x) = b(t, X_{t}(x)), \\
    X_{0}(x) = x, \\
    (X_{t})_{\#} \mathcal{L}^d = \mathcal{L}^d,
\end{cases} \tag{RLF}
\end{equation}
where the first two conditions hold for $\mathcal{L}^d$-almost every $x \in \mathbb{T}^d$, and the last condition ensures that the flow map preserves the Lebesgue measure. Although trajectories driven by Sobolev velocity fields may be non-unique~\cite{BCDL2021,AK23}, foundational results~\cite{DPL89,A04} guarantee the uniqueness of Regular Lagrangian flows (RLF) within our framework. In contrast to Lipschitz velocity fields, RLF can exhibit highly singular behavior, lacking Sobolev or even fractional Sobolev regularity~\cite{GAGCAM19,GAGCAM19b,PEJ16}. Nonetheless, their weak regularity and differentiability properties \cite{CLBPLL04,SBNDN22,GCCDL08,FL18,EBQHN21} render them a rich and compelling class of dynamical systems, general enough to include many known examples (see Section~\ref{sec:FrameworkMotivations}) yet sufficiently regular to support meaningful well-posedness and applications.

\medskip
By the time periodicity of $b(t,x)$, the Regular Lagrangian flow at integer times acts as a measure-preserving map on the torus. Thus, we focus on the dynamical system $(M, \mathcal{B}, \mu, T)$, where $M = \mathbb{T}^d$ is the $d$-dimensional torus, $\mathcal{B}$ is the Lebesgue $\sigma$-algebra, $\mu = \mathcal{L}^d$ is the Lebesgue measure, and $T(x) = X_1(x)$ is a measure-preserving map. 
The central results of this work are summarized as follows:
\begin{enumerate}
    \item We introduce a notion of \textit{Lyapunov exponents} $\lambda_1(x) > \dots > \lambda_{k(x)}(x)$ and an Oseledets decomposition tailored to our singular framework. Weak differentiability results, such as those in~\cite{CLBPLL04,SBNDN22,GCCDL08}, play a pivotal role in this development.
    \item We investigate the \textit{metric entropy} $h_\mu(T)$ of the system, recovering the Ruelle inequality, which relates metric entropy to the integral of positive Lyapunov exponents, in our singular framework.
    \item We provide lower bounds on the top Lyapunov exponent in terms of the exponential mixing rate of certain observables. This study advances the understanding of \textit{Bressan's mixing conjecture}~\cite{AB03}, demonstrating its asymptotic validity.
\end{enumerate}

\subsection{Lyapunov Exponents}

The key to defining Lyapunov exponents in our framework lies in the results of Le Bris and Lions~\cite{CLBPLL04}, where the authors establish the well-posedness of the system
\begin{equation}\label{eqn:Wt}
\left\{
\begin{array}{l}
\frac{d}{dt} W_t(x) = \nabla b(t, X_t(x)) W_t(x), \\
W_0(x) = {\rm id}, \\
\end{array}
\right.
\quad x \in \mathbb{T}^d, \, t \geq 0,
\end{equation}
where $b \in L^1_{\mathrm{loc}}(\mathbb{R}_+; W^{1,1}(\mathbb{T}^d; \mathbb{R}^d))$ is incompressible and $X_t(x)$ is the associated Regular Lagrangian flow (RLF). When $b(t,x)$ is $1$-periodic in time, as in our framework, this system gives rise to a natural cocycle: for every positive integer $k$,
\begin{equation}
W_k(x) = W_{1}(X_{k-1}(x)) \cdots W_{1}(X_1(x)) W_{1}(x) \quad \text{for a.e.\ } x \in \mathbb{T}^d,
\end{equation}
This cocycle is log-integrable, as required for applying the classical Oseledets theorem (see Section~\ref{subsec:OseledetsMET}). Notably, Le Bris and Lions~\cite{CLBPLL04} showed that $W_t(x)$ serves as a {\it differential in measure} of $X_t(x)$ (see Section~\ref{subsec:DifferentiabilityOfRLF}), a notion weaker than weak differentiability~\cite{LAJM}. However, when the velocity field is Sobolev regular with $p > 1$, the RLF $X_t(x)$ is weakly differentiable~\cite{GCCDL08} and satisfies a quantitative Lusin-Lipschitz inequality~\cite{EBQHN21}. This upgrades $W_t(x)$ to a weak differential, enabling quantitative results on Lyapunov exponents, such as the Ruelle inequality (see Section \ref{sec:introRuelle}). Finally, the construction of Lyapunov exponents extends to $\BV$ velocity fields, thanks to recent results~\cite{SBNDN22}.

\begin{theorem}[Lyapunov Exponents]\label{thm:LyapunovSobolev}
Let $b \in L^1_{\mathrm{loc}}(\mathbb{R}_+; \mathrm{BV}(\mathbb{T}^d; \mathbb{R}^d))$ be a $1$-periodic, incompressible velocity field with associated Regular Lagrangian flow $X_{t}(x)$. There exists a measurable set $\Gamma \subseteq \mathbb{T}^d$ of full measure and a function $k : \Gamma \to \mathbb{N}_{\geq 1}$ such that, for all $x \in \Gamma$, there are
 \begin{equation}
\infty > \lambda_1(x) > \ldots > \lambda_{k(x)}(x) > - \infty \quad \text{  and direct sum decomposition }\mathbb{R}^d = E^1_x \oplus \dots \oplus E^{k(x)}_x.
 \end{equation}
 for which the following holds:
\begin{enumerate}
    \item The maps $x \mapsto k(x)$, $x \mapsto \lambda_i(x)$, and $x \mapsto E^i_x$ are measurable. \label{item:LyapunovSobolevMeasurabilityOfMaps}
    \item The maps are $X_{1}$-invariant, i.e., $k \circ X_{1} = k$, $\lambda_i \circ X_{1} = \lambda_i$, and $W_{1}(x) E^i_x = E^i_{X_{1}(x)}$. \label{item:LyapunovSobolevInvariance}
    \item For all $v \in E^i_x \setminus \{0\}$, it holds
    \begin{equation}
        \lim_{t \to \infty} \frac{1}{t} \log |W_{t}(x) v| = \lambda_i(x).
    \end{equation} \label{item:LyapunovSobolevLyapunovExps}
\end{enumerate}
\end{theorem}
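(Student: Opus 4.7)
The plan is to reduce the theorem to a direct application of the classical (invertible) Oseledets Multiplicative Ergodic Theorem (MET), with $(M,\mathcal{B},\mu,T)=(\mathbb{T}^d,\mathcal{B},\mathcal{L}^d,X_1)$ and linear cocycle generated by $W_1(x)$. The first task is to set up the cocycle rigorously. Using the well-posedness of \eqref{eqn:Wt} in the BV setting (from \cite{SBNDN22}, extending \cite{CLBPLL04}), together with the time-$1$ periodicity of $b$ and uniqueness of solutions, one gets the semigroup identity $W_{t+s}(x)=W_t(X_s(x))W_s(x)$ for a.e.\ $x$, and iterating at integer times yields $W_k(x)=W_1(X_{k-1}x)\cdots W_1(x)$. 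Since $\divergence b=0$, Liouville gives $\det W_t(x)=1$ a.e., so $W_t(x)\in \GL(d,\mathbb{R})$; invertibility of the base map $T=X_1$ follows from applying DiPerna--Lions to $\tilde b(t,x):=-b(1-t,x)$.

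Next I would verify the hypotheses of Oseledets MET, namely $\log^+\|W_1\|,\,\log^+\|W_1^{-1}\|\in L^1(\mu)$. A Gr\"onwall argument along trajectories yields the pointwise bound $\log\|W_1(x)\|\le \int_0^1 g(s,X_s(x))\,ds$ where $g(s,\cdot)$ is the density of the (absolutely continuous part of the) total variation measure of $\nabla b(s,\cdot)$, handled suitably in the BV case via the anisotropic regularization of \cite{SBNDN22}. Measure preservation of $X_s$ and Fubini then give
\begin{equation*}
\int_{\mathbb{T}^d}\log\|W_1\|\,d\mu \le \int_0^1\|\nabla b(s,\cdot)\|_{\MF}\,ds<\infty,
\end{equation*}
and because $W_1\in\mathrm{SL}(d)$ one has $\|W_1^{-1}\|\le \|W_1\|^{d-1}$, so log-integrability transfers to the inverse for free. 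Measurability of $x\mapsto W_1(x)$ is inherited from the measurability of the RLF together with the construction in \cite{CLBPLL04,SBNDN22}.

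At this point the invertible MET produces a full-measure $T$-invariant set $\Gamma$, measurable $T$-invariant functions $k(\cdot)$ and $\lambda_1(\cdot)>\dots>\lambda_{k(\cdot)}(\cdot)$ with values in $[-\infty,\infty)$, and a measurable Oseledets splitting $\mathbb{R}^d=E^1_x\oplus\dots\oplus E^{k(x)}_x$ with $W_1(x)E^i_x=E^i_{X_1(x)}$, satisfying the asymptotic behaviour along the sequence $t=n\in\N$. The finiteness $\lambda_{k(x)}(x)>-\infty$ follows again from $W_1\in\mathrm{SL}(d)$. This gives items \eqref{item:LyapunovSobolevMeasurabilityOfMaps} and \eqref{item:LyapunovSobolevInvariance}, and the discrete version of \eqref{item:LyapunovSobolevLyapunovExps}.

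The last step, which I expect to be the main technical point, is to upgrade the limit from integers $n\to\infty$ to real $t\to\infty$. Writing $t=n+r$ with $r\in[0,1)$, the cocycle identity gives $W_t(x)v=W_r(X_n x)W_n(x)v$, and one obtains \eqref{item:LyapunovSobolevLyapunovExps} provided
\begin{equation*}
\frac{1}{n}\,\varphi(X_n x)\xrightarrow[n\to\infty]{}0 \quad \text{a.e., where }\varphi(y):=\sup_{r\in[0,1]}\bigl(\,\log^+\|W_r(y)\|+\log^+\|W_r(y)^{-1}\|\bigr).
\end{equation*}
The obstacle is showing $\varphi\in L^1(\mu)$: this requires an $r$-uniform Gr\"onwall bound in the BV setting, typically of the form $\varphi(y)\le \int_0^1 g(s,X_s(y))\,ds$, whose integrability has already been established above. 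Given $\varphi\in L^1$, a Borel--Cantelli argument applied to $\{\varphi>\eps n\}$ and the measure-preservation of $X_n$ yields the needed temperedness, closing the proof.
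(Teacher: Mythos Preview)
Your proposal is correct and follows essentially the same route as the paper: establish the cocycle identity $W_n(x)=W_1(X_1^{n-1}x)\cdots W_1(x)$ from the well-posedness results of \cite{CLBPLL04,SBNDN22}, verify the log-integrability $\int\log^+|W_1|\,dx\le\int_0^1\int|\nabla b|$, and apply the invertible Oseledets theorem (Theorem~\ref{thm:METOne} together with Remark~\ref{rmk:StrongerMET}). Your argument is in fact more detailed than the paper's in two respects: you make explicit the bound $\|W_1^{-1}\|\le\|W_1\|^{d-1}$ coming from $\det W_1=1$ to get log-integrability of the inverse, and you carry out the upgrade from integer $n$ to continuous $t$ via the temperedness argument $\frac{1}{n}\varphi(X_n x)\to 0$, which the paper leaves implicit.
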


\subsection{Metric Entropy and the Ruelle Inequality}\label{sec:introRuelle}

The \textit{metric entropy} $h_\mu(T)$ of a measure-preserving dynamical system $(M, \mathcal{B}, \mu, T)$ quantifies the rate of increase in dynamical complexity as the system evolves over time~\cite{ANK59,YS59}. It is a central notion in modern ergodic theory, and we refer the reader to Section \ref{subsec:entropy} for our working definitions and to the textbooks~\cite{AKBH95,PW82} and surveys~\cite{LSY03,AK22} for comprehensive treatments.

A celebrated result by Ruelle~\cite{DR78} establishes that the metric entropy of a regular dynamical system is bounded by the sum of its positive Lyapunov exponents. Specifically, for a $C^1$ map $T: M \to M$ on a smooth manifold $M$ with an invariant measure $\mu$, Ruelle's theorem states that
\begin{equation}
h_\mu(T) \leq \int_M \sum_{i=1}^{k(x)} \lambda_i^+(x) m_i(x) \, d\mu(x), \label{eq:Ruelle}
\end{equation}
where $\lambda_i^+(x) = \max\{\lambda_i(x), 0\}$ are the positive Lyapunov exponents and $m_i(x)$ their multiplicities. When $T \in C^{1,\alpha}$ (for some $\alpha > 0$) and $\mu$ is equivalent to a Riemannian volume measure on $M$, Pesin's theorem~\cite{YP77} establishes equality in \eqref{eq:Ruelle}.

\bigskip
A central focus of this work is the study of metric entropy and the Ruelle inequality in the context of flows driven by Sobolev  velocity fields. Specifically, we consider the setting where $M = \mathbb{T}^d$, $\mu = \mathcal{L}^d$ (the Lebesgue measure), and the dynamics is induced by the Regular Lagrangian flow $X_{t}(x)$ associated with a $1$-periodic, incompressible velocity field $b \in L^1_{\mathrm{loc}}(\mathbb{R}_+; W^{1,p}(\mathbb{T}^d; \mathbb{R}^d))$. 

\begin{theorem}[Ruelle Inequality]\label{thm:RuelleRLF}
Let $X_t(x)$ be the RLF associated with a $1$-periodic, incompressible velocity field $b \in L^1_{\mathrm{loc}}(\mathbb{R}_+; W^{1,p}(\mathbb{T}^d; \mathbb{R}^d))$ with  $p > 1$.  Then
\begin{equation}
	h_{\mathcal{L}^d}(X_1) \leq \int_{\mathbb{T}^d} \sum_{i=1}^{k(x)} \lambda_i^+(x)\, m_i(x) \, dx,
\end{equation}
where $\lambda_i^+(x) := \max\{\lambda_i(x), 0\}$ are the positive Lyapunov exponents and $m_i(x)$ their multiplicities, as defined in Theorem~\ref{thm:LyapunovSobolev}.
\end{theorem}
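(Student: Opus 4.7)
The plan is to adapt Mañé's partition-based proof of the classical Ruelle inequality to the DiPerna--Lions setting by replacing the pointwise differential with the cocycle $W_n$ from Theorem~\ref{thm:LyapunovSobolev} and leveraging the weak differentiability of the RLF together with the quantitative Lusin--Lipschitz inequality available for $W^{1,p}$ velocity fields with $p>1$.

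First, I would reduce the entropy bound to a partition-counting estimate at scale $\epsilon$. Let $\mathcal{P}_\epsilon$ be the partition of $\mathbb{T}^d$ into cubes of side $\epsilon$. Since the $\mathcal{P}_\epsilon$ refine and their union generates the Borel $\sigma$-algebra, $h_{\mathcal{L}^d}(X_n) = \lim_{\epsilon \to 0} h_{\mathcal{L}^d}(X_n,\mathcal{P}_\epsilon)$, and the classical inequality $h_{\mathcal{L}^d}(X_n,\mathcal{P}_\epsilon) \leq H_{\mathcal{L}^d}(X_n^{-1}\mathcal{P}_\epsilon \mid \mathcal{P}_\epsilon)$ together with the measure-preserving property yield
\begin{equation*}
h_{\mathcal{L}^d}(X_n,\mathcal{P}_\epsilon) \leq \int_{\mathbb{T}^d} \log N_\epsilon(x,n) \, dx,
\end{equation*}
where $N_\epsilon(x,n) := \#\{Q \in \mathcal{P}_\epsilon : Q \cap X_n(P_\epsilon(x)) \neq \emptyset\}$ and $P_\epsilon(x)$ is the cube containing $x$. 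Combined with $h_{\mathcal{L}^d}(X_1) = \frac{1}{n}h_{\mathcal{L}^d}(X_n)$, I will first send $\epsilon \to 0$ and then $n \to \infty$.

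The central geometric estimate bounds $\log N_\epsilon(x,n)$ by the singular values of $W_n(x)$. For each $\delta > 0$, the Lusin--Lipschitz inequality of Bru\'e--Nguyen and the Crippa--De Lellis weak differentiability produce a set $K_{n,\delta} \subset \mathbb{T}^d$ with $\mathcal{L}^d(K_{n,\delta}^c) < \delta$ on which $X_n$ is Lipschitz and pointwise differentiable with differential $W_n$. For $x$ a Lebesgue density point of $K_{n,\delta}$ and $\epsilon$ below a point-dependent threshold,
\begin{equation*}
X_n(P_\epsilon(x)) \subset X_n(x) + W_n(x)(P_\epsilon(x) - x) + B(0, \eta_\epsilon(x)\,\epsilon), \quad \eta_\epsilon(x) \to 0,
\end{equation*}
and a direct volume count yields $\log N_\epsilon(x,n) \leq \sum_{i=1}^d \log^+ \sigma_i(W_n(x)) + \omega_\epsilon(x)$, with $\omega_\epsilon(x) \to 0$ pointwise on $K_{n,\delta}$. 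On the complement $K_{n,\delta}^c$ one uses the $L^p$-integrable pointwise Lusin--Lipschitz constant from Bru\'e--Nguyen to obtain a crude bound on $\log N_\epsilon(x,n)$ whose contribution to the integral is absorbed by choosing $\delta=\delta(n,\epsilon)$ appropriately.

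Finally, passing to the limit $n \to \infty$ and writing $\sum_{i=1}^d \log^+\sigma_i(W_n(x)) = \log \max_k \|\wedge^k W_n(x)\|$, Kingman's subadditive ergodic theorem combined with the Oseledets decomposition of Theorem~\ref{thm:LyapunovSobolev} delivers
\begin{equation*}
\lim_{n \to \infty} \frac{1}{n} \int_{\mathbb{T}^d} \sum_{i=1}^d \log^+ \sigma_i(W_n(x)) \, dx = \int_{\mathbb{T}^d} \sum_{i=1}^{k(x)} \lambda_i^+(x)\, m_i(x) \, dx,
\end{equation*}
which is the claimed bound. The principal obstacle is the coupled management of the three limits $\epsilon\to 0$, $\delta \to 0$, $n \to \infty$: the Lipschitz constant on $K_{n,\delta}$ deteriorates with $n$, so the linearization scale shrinks rapidly, and the bad-set contributions must decay uniformly enough to be absorbed by the $1/n$ prefactor. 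This is where the quantitative form of the Lusin--Lipschitz inequality and the $L^p$ control on the maximal function of $|\nabla b|$ play the decisive role.
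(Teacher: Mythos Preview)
Your overall strategy matches the paper's: reduce entropy to a partition-counting estimate via conditional entropy, bound the count by singular values of the cocycle on a large good set where the map is approximately linear, handle the small bad complement via the Lusin--Lipschitz inequality, then iterate and pass to the limit. There is, however, a genuine gap in the linearization step. The inclusion $X_n(P_\epsilon(x)) \subset X_n(x) + W_n(x)(P_\epsilon(x)-x) + B(0,\eta_\epsilon(x)\epsilon)$ does \emph{not} follow from $x$ being a density point of $K_{n,\delta}$: the cube $P_\epsilon(x)$ will typically contain a positive-measure portion of $K_{n,\delta}^c$, and for those points the linearization at $x$ gives no control, so their images under $X_n$ can spread over uncontrollably many $\epsilon$-cubes. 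The same problem afflicts your bad-set estimate: the Lusin--Lipschitz bound reads $\metric(X_n(x),X_n(y)) \le e^{g_n(x)+g_n(y)}\metric(x,y)$ and involves \emph{both} endpoints, so knowing only that $x\in K_{n,\delta}^c$ does not yield a ``pointwise'' bound on $N_\epsilon(x,n)$.

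The paper's fix is to refine the cube partition $\xi_k$ by a second partition $\beta_k$ built from the good set $A_\eps$ together with dyadic level sets $\{2^{i-1}\le g^\star < 2^i\}\cap A_\eps^c$ of the \emph{maximal ergodic function} $g^\star(x)=\sup_{n\ge 1}\tfrac1n\sum_{j=0}^{n-1}g(T^j x)$. Within any atom of $\beta_k\vee\xi_k$, either all points lie in $A_\eps$ (so the approximate-gradient linearization applies uniformly) or all points have $g^\star$ in a single dyadic shell (so the two-point Lipschitz constant is uniformly controlled). The almost-invariance $g^\star\circ T\le 2g^\star$ further bounds how many $\beta_k$-atoms the image can meet. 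Since $g^\star\in L^p$ by the maximal ergodic theorem (here $p>1$ is essential), the bad-set contribution is $\int_{A_\eps^c} g^\star \, d\mu \lesssim \eps^{1/p'}\|g^\star\|_{L^p}\to 0$. Your closing remark about the maximal function of $|\nabla b|$ is pointing at precisely this object, but the partition refinement is the concrete mechanism that is missing from your proposal.
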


 In Subsection~\ref{subsec:Ruelle}, we prove a more general version of Theorem~\ref{thm:RuelleRLF} (see Theorem~\ref{thm:RuelleGeneral}), applicable to a broad class of maps, including Regular Lagrangian flows (RLF) driven by $W^{1,p}$ incompressible velocity fields, classical Lipschitz and Sobolev maps, and maps with singularities as studied in~\cite{AKJMSFLFP86, CL95, NC99, NCRM06, VBMFDCL18, VBMFD20} to mention just a few.
To this end, we isolate a regularity assumption of the dynamical system which is shared by all these singular settings (see Assumption~\ref{assump:LusinLip} below).
To the authors' knowledge, {\it Theorem~\ref{thm:RuelleGeneral} represents the most general formulation of the Ruelle inequality when the volume measure is absolutelly contiuous to the volume measure, available in the literature}.

\bigskip

An open question, currently beyond the reach of existing techniques, concerns the finiteness of the metric entropy for flows driven by velocity fields in $\BV$ or $W^{1,1}$. This problem is closely tied to the propagation of regularity in such low-regularity settings and to Bressan's mixing conjecture.
Heuristically, the possibility of infinite entropy production in this framework may signal super-exponential trajectory separation on a set of positive measure, potentially pointing to new loss of regularity phenomena and perhaps even to the failure of Bressan’s conjecture.

\begin{question}[Entropy of $\BV$ flows]
Let $X_t(x)$ be the RLF associated with a divergence-free, $1$-periodic velocity field $b \in L^1_{\mathrm{loc}}(\mathbb{R}_+; \BV(\mathbb{T}^d; \mathbb{R}^d))$ (or even $b \in L^1_{\mathrm{loc}}(\mathbb{R}_+; W^{1,1}(\mathbb{T}^d; \mathbb{R}^d))$). 

Is the metric entropy $h_{\mathcal{L}^d}(X_1)$ finite? Does the Ruelle inequality remain valid in this setting?
\end{question}

\subsection{Log-Sobolev Regularity and Lyapunov Exponents}

The propagation of regularity for flow maps and passive scalars advected by singular velocity fields is a central topic of research with broad implications.

To formulate the problem, we consider a divergence-free, $1$-periodic velocity field $b \in L^1_{\mathrm{loc}}(\mathbb{R}_+; W^{1,p}(\mathbb{T}^d; \mathbb{R}^d))$ for some $p \geq 1$, and a bounded density function $\rho_{\rm in}\in L^\infty(\T^d)$. We study the Cauchy problem for the transport equation:
\begin{equation} \tag{TE}\label{eq:TransportEquation}
\left\{
\begin{array}{ll}
\partial_t \rho + b \cdot \nabla \rho = 0 & \text{in $\R_+ \times \mathbb{T}^d$}, \\
\rho(0, x) = \rho_{\mathrm{in}}(x) &  x\in \mathbb{T}^d.
\end{array}
\right.
\end{equation}
By results of DiPerna, Lions, and Ambrosio~\cite{DPL89, A04}, this system is well-posed, and the density $\rho\in L^\infty_t L^\infty_x$ is transported by the Regular Lagrangian flow $X_t(x)$.

In this framework, the {\it log-Sobolev norm} is the appropriate measure for studying regularity propagation:
\begin{equation}\label{eq:HomogeneousLogSobolevNorm}
\|\rho(t,\cdot)\|_{\dot{H}_{\mathrm{log}}(\mathbb{T}^d)}^2 :=  \int_{\mathbb{T}^d} \left( \int_{B_{1/5}(0)} \dfrac{|\rho(t,x+h) - \rho(t,x)|^2}{|h|^d} dh\right) dx .
\end{equation}
By~\cite{EBQHN21, PEJ16, FL18}, for $p > 1$, there exists a constant $C(d, p)$ such that
\begin{equation}\label{eq:standardpropagation}
\|\rho( t, \cdot)\|_{\dot{H}_{\mathrm{log}}(\mathbb{T}^d)}^2 
\leq
\|\rho_{\rm in}\|_{\dot{H}_{\mathrm{log}}(\mathbb{T}^d)}^2
+
C(d, p) \int_0^t \|\nabla b(s,\cdot)\|_{L^p} \, ds ,
\quad t\ge 0.
\end{equation}
Whether propagation holds for $p = 1$ remains an open question, with significant implications for Bressan’s mixing conjecture.

There are compelling reasons to focus on the log-Sobolev norm:
\begin{enumerate}
    \item No stronger norm can be propagated, as demonstrated in~\cite{GAGCAM19,GAGCAM19b,PEJ16,EBQHN21}. 
    \item The sharp growth of $\|\rho( t, \cdot)\|_{\dot{H}_{\mathrm{log}}(\mathbb{T}^d)}^2$ is linear in time, saturated when the system exhibits exponential separation of orbits.
\end{enumerate}
Given the second point, it is natural to hypothesize that the asymptotic growth rate $\frac{1}{t} \|\rho( t, \cdot)\|_{H_{\mathrm{log}}(\mathbb{T}^d)}$ is related to Lyapunov exponents. This is confirmed by the following theorem. The estimate is in terms of the {\it top Lyapunov exponent} defined as the maximum Lyapunov exponent in Theorem~\ref{thm:LyapunovSobolev} \ref{item:LyapunovSobolevLyapunovExps} or equivalently as 
\begin{equation}
\lambda_{\max}(x) = \lim_{t \to \infty} \frac{1}{t} \log |W_{t}(x)|.
\end{equation}

\begin{theorem}\label{thm:RLFAsymptoticRegularityThm}
Let $p \in (1, \infty]$, and let $b \in L^1_{\mathrm{loc}}(\mathbb{R}_+; W^{1,p}(\mathbb{T}^d; \mathbb{R}^d))$ be a $1$-periodic, divergence-free velocity field. Let $\rho_{\mathrm{in}} \in L^\infty \cap \mathrm{BV}(\mathbb{T}^d)$, and let $\rho \in L^{\infty}(\mathbb{R}_+ \times \mathbb{T}^d)$ be the unique solution to  \eqref{eq:TransportEquation}. Then
\begin{equation}\label{eqn:RLFAsymptoticRegularity}
	\limsup_{t \to \infty} \frac{1}{t} \|\rho(t,\cdot)\|_{\dot H_{\mathrm{log}}(\mathbb{T}^d)}^2 
	\leq C(d) \| \rho_{\mathrm{in}} \|_{L^\infty}^2 
	\int_{\mathbb{T}^d} \lambda_{\max}(x) \, dx
\end{equation}
where $\lambda_{\max}(x)$ denotes the top Lyapunov exponent of $X_1^{-1}$ defined in Theorem~\ref{thm:LyapunovSobolev}.
\end{theorem}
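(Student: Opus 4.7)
Starting from the Lagrangian identity $\rho(t,x) = \rho_{\mathrm{in}}(Y_t(x))$ with $Y_t := X_t^{-1}$ (valid by DiPerna--Lions well-posedness), the elementary inequality $|a-b|^2 \le 2\|\rho_{\mathrm{in}}\|_{L^\infty}|a-b|$ together with the measure-preserving change $x = X_t(y)$ gives
\[
\|\rho(t,\cdot)\|_{\dot H_{\mathrm{log}}}^2 \le 2\|\rho_{\mathrm{in}}\|_{L^\infty}\!\int_{\T^d}\!\!\int_{B_{1/5}(0)}\!\!\frac{|\rho_{\mathrm{in}}(y+\eta(y,h)) - \rho_{\mathrm{in}}(y)|}{|h|^d}\,dh\,dy,
\]
where $\eta(y,h) := Y_t(X_t(y)+h)-y$. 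I would split the $h$-integral at some $t$-independent radius $\delta>0$: on the bulk $|h|>\delta$, the bound $|\rho_{\mathrm{in}}|\le\|\rho_{\mathrm{in}}\|_{L^\infty}$ yields an $O(d,\delta)\|\rho_{\mathrm{in}}\|_{L^\infty}^2$ contribution, independent of $t$.

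The heart of the argument is the small-$h$ regime, where the pointwise differentiability and quantitative Lusin--Lipschitz estimates for $X_t, Y_t$ from \cite{CLBPLL04,GCCDL08,EBQHN21} (requiring $p>1$) yield $\eta(y,h) = W_t(y)^{-1}h + o(|h|)$ as $h\to 0$ for a.e.\ $y$. Substituting the linearization, performing the area-preserving change $h = W_t(y)\xi$ (since $\det W_t = 1$ by incompressibility), and passing to polar coordinates $\xi = r\omega$ factorizes the kernel as $|W_t(y)\omega|^{-d}\,r^{-1}\,dr\,d\sigma(\omega)$. For each fixed direction, the truncated BV bound $\int_{\T^d}|\rho_{\mathrm{in}}(y+r\omega)-\rho_{\mathrm{in}}(y)|\,dy \le \min(r\|\rho_{\mathrm{in}}\|_{\BV},\,2\|\rho_{\mathrm{in}}\|_{L^\infty})$, integrated against $r^{-1}\,dr$ up to the cutoff $(5|W_t(y)\omega|)^{-1}$, produces a $\log^+(1/|W_t(y)\omega|)$ factor plus $t$-independent constants. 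The spherical integration is closed via the identity
\[
\int_{S^{d-1}}\frac{\log(1/|W\omega|)}{|W\omega|^d}\,d\sigma(\omega) \le |S^{d-1}|\,\log^+|W^{-1}|,
\]
which follows from the change $\eta = W\omega/|W\omega|$ whose surface Jacobian on $S^{d-1}$ equals $|W\omega|^{-d}$ when $\det W = 1$. Putting the pieces together,
\[
\|\rho(t,\cdot)\|_{\dot H_{\mathrm{log}}}^2 \le C(d)\,\|\rho_{\mathrm{in}}\|_{L^\infty}^2 \int_{\T^d}\log^+|W_t(y)^{-1}|\,dy + R(d,\rho_{\mathrm{in}},\delta),
\]
with remainder $R$ uniformly bounded in $t$.

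The conclusion comes from Kingman's subadditive ergodic theorem applied to $f_n(y) := \log|W_n(y)^{-1}|$: by the cocycle identity $W_{n+m}(y)^{-1} = W_n(y)^{-1}\,W_m(X_n(y))^{-1}$ the sequence is subadditive under $T = X_1$, and it is log-integrable for $p>1$. Together with the identification of $\lambda_{\max}(x)$ as the top Lyapunov exponent of $X_1^{-1}$ from Theorem~\ref{thm:LyapunovSobolev}, Kingman yields
\[
\lim_{n\to\infty}\frac{1}{n}\int_{\T^d}\log|W_n(y)^{-1}|\,dy = \int_{\T^d}\lambda_{\max}(y)\,dy;
\]
dividing the previous estimate by $t$ and passing to $\limsup$ completes the proof. \emph{The main obstacle} is the delicate control of the linearization error $\eta(y,h) - W_t(y)^{-1}h$ near the transition radius $|h|=\delta$ and on the exceptional set where pointwise differentiability of $Y_t$ fails to hold uniformly; this is where the Lusin--Lipschitz machinery of \cite{EBQHN21} is essential, and where the hypothesis $p>1$ is sharply used (for $p=1$ the argument breaks down, in line with the open problem on $\BV$ entropy highlighted earlier in the paper).
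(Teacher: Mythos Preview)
Your proposal has the right high-level shape (Lagrangian representation, pass to the backward flow, extract $\log|W_t^{-1}|$, finish with Kingman), but there is a genuine gap in the core computation. After your change of variables $h=W_t(y)\xi$ you write that the BV bound $\int_{\T^d}|\rho_{\mathrm{in}}(y+r\omega)-\rho_{\mathrm{in}}(y)|\,dy\le\min(r\|\rho_{\mathrm{in}}\|_{\BV},2\|\rho_{\mathrm{in}}\|_{L^\infty})$, integrated in $r$ up to $(5|W_t(y)\omega|)^{-1}$, ``produces a $\log^+(1/|W_t(y)\omega|)$ factor''. This is circular: the BV estimate is an $L^1_y$ bound and requires integrating over $y$, yet both the Jacobian weight $|W_t(y)\omega|^{-d}$ and the radial cutoff depend on $y$. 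You cannot simultaneously integrate out $y$ and retain a $y$-dependent answer; and if you try to bound $|W_t(y)\omega|^{-d}$ pointwise before integrating, you pick up $|W_t(y)^{-1}|^d$, which is exponentially large in $t$ and destroys the estimate. The linearization error that you flag as ``the main obstacle'' is a second, independent gap: approximate differentiability gives $\eta(y,h)=W_t(y)^{-1}h+o(|h|)$ only pointwise in $y$, with no uniformity, so the replacement cannot be justified on a set of full measure at a fixed scale $\delta$.

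The paper avoids both issues by never linearizing inside the $\dot H_{\log}$ integral. Instead it constructs, for each $n$ and each $\eps>0$, a \emph{pointwise} Lusin--Lipschitz bound $\metric(T^n x,T^n y)\le e^{\ell_n(x)+\ell_n(y)}\metric(x,y)$, where $\ell_n$ is built by summing $\log(\eps+|\apD T(T^jx)|)$ over the iterates lying in a good set (from Lemma~\ref{lemma:UsefulLemmaAboutTheGradientBeingAGoodApprox}) and $g(T^jx)$ on the small bad set; the $L^1$ norm of $\ell_n$ is then $n\int\log|\apD T|\,dx$ plus errors small in $\eps$. Composing with the analogous Lusin--Lipschitz bound for $\rho_{\mathrm{in}}\in\BV$ and invoking Proposition~\ref{prop:LusinLipBoundGivesBoungOnIntegral} (which converts any such pointwise bound directly into $\|f\|_{\dot H_{\log}}^2\le C(\|g\|_{L^1}+\|f\|_{L^1})$) yields the estimate in one stroke, with no Fubini conflict and no need to control the linearization remainder. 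Your final Kingman step is correct and matches the paper's conclusion.
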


Theorem~\ref{thm:RLFAsymptoticRegularityThm} provides an asymptotic regularity estimate that improves upon the standard log-Sobolev bound in \eqref{eq:standardpropagation}. Indeed, we first observe that 
{by Fatou's lemma the top Lyapunov exponent of $X_1$ admits the following a priori bound in terms of the total variation of the velocity field:
\begin{equation}\label{eq:EstimateOnTopLyapunovExp}
\int_{\mathbb{T}^d} \lambda_{\max}(x) \, dx\leq \liminf_{t \to \infty} \dfrac{1}{t} \int_{\T^d} \log |W_{t}(x)| \, d x  \leq \fint_0^{1} \int_{\mathbb{T}^d} |\nabla b(t,x)| \, dx \, dt.
\end{equation}
The latter inequality follows from \eqref{eqn:Wt} for vector fields in $ L^1_{\mathrm{loc}}(\mathbb{R}_+; \mathrm{BV}(\mathbb{T}^d; \mathbb{R}^d))$ and will be justified rigorously for $\BV$ vector fields in Subsection~\ref{subsec:DifferentiabilityOfRLF}.}
Equation~\eqref{eq:EstimateOnTopLyapunovExp} also holds for the top Lyapunov exponent of $X_1^{-1}$ and yields
\begin{equation}
	\limsup_{t \to \infty} \frac{1}{t} \|\rho(t,\cdot)\|_{H_{\mathrm{log}}(\mathbb{T}^d)}^2 
	\leq C(d) \| \rho_{\mathrm{in}} \|_{L^\infty}^2 
	\fint_0^1 \int_{\mathbb{T}^d} |\nabla b(t,x)| \, dx \, dt.
\end{equation}

To the best of the authors' knowledge, this result is new even in the case of smooth velocity fields. We refer the reader to Theorem~\ref{thm:AsymptoticRegularityMPS} for a more general version of Theorem~\ref{thm:RLFAsymptoticRegularityThm}, which applies to singular maps as introduced in Section~\ref{sec:singmaps}.

\subsection{Bound on Mixing and Bressan's Conjecture}\label{sec:IntroMixing}
Mixing induced by incompressible flows is a fundamental stabilization mechanism in fluid dynamics, associated with the transfer of energy from large to small spatial scales. Although this process is conservative and reversible over finite time intervals, it leads to an effective irreversible loss of information in the long-time regime.

A widely used quantitative measure of the degree of mixing of a mean-free passive scalar $\rho(t,x)$, i.e., a solution to the transport equation~\eqref{eq:TransportEquation}, is given by the homogeneous negative Sobolev norm:
\begin{equation}\label{eq:H-1}
   \| \rho(t,\cdot) \|_{\dot{H}^{-1}(\T^d)}^2 = \sum_{k \in \Z^d \setminus \{ 0 \}} |k|^{-2} |\hat{\rho}(t,k)|^2,
\end{equation}
where $\hat{\rho}(t,k)$ denotes the $k$-th Fourier coefficient of $\rho(t,\cdot)$. This functional was introduced in~\cite{GMIMLP05,ZLJLTCRD11}, and it shares a close analogy with the classical notion of decay of correlations with H\"older observables, a well-established concept in mathematical physics and dynamical systems.

Two central questions in this context have received considerable attention: how rapidly the quantity in~\eqref{eq:H-1} can decay under constraints on the velocity field $b$, and which velocity fields can achieve such decay. It is classical that for incompressible $b \in L^\infty_t W^{1,\infty}_x$, the norm~\eqref{eq:H-1} cannot decay faster than exponentially, with rate $\sim e^{-t C \|\nabla b \|_{L^\infty}}$. This estimate was {improved }in~\cite{GCCDL08} (see also~\cite{GIAKXX14}), where the authors establish the exponential lower bound $\sim e^{-t C(p) \|\nabla b \|_{L^p}}$ for $b \in L^\infty_t W^{1,p}_x$, with $p \in (1,\infty]$.

It is conjectured that the optimal lower bound should be $\sim e^{-t C \|\nabla b \|_{L^1}}$, and that it holds for incompressible velocity fields in $W^{1,1}$ or $\BV$. This is closely related to Bressan’s rearrangement cost conjecture~\cite{AB03} and remains a major open problem. 

\medskip

A consequence of Theorem~\ref{thm:RLFAsymptoticRegularityThm} is a quantitative lower bound on mixing, expressed in terms of the time average of the maximal Lyapunov exponent.

\begin{theorem}\label{thm:RLFAsymptoticMixingThm}
Let $p \in (1, \infty]$, and let $b \in L^1_{\loc}(\mathbb{R}_+; W^{1,p}(\mathbb{T}^d; \mathbb{R}^d))$ be a $1$-periodic, divergence-free velocity field. Let $\rho_{\initial} \in L^\infty \cap \BV(\mathbb{T}^d)$ be mean-free, i.e., $\int_{\mathbb{T}^d} \rho_{\initial} = 0$. Let $\rho \in L^\infty(\mathbb{R}_+ \times \mathbb{T}^d)$ be the unique solution to \eqref{eq:TransportEquation}. Then
\begin{equation}\label{eq:MainEqOfRLFAsymptoticMixingThm}
    \liminf_{t \to \infty} \frac{1}{t} \log \| \rho(t,\cdot) \|_{\dot{H}^{-1}(\mathbb{T}^d)} 
    \ge -C(d) \left( \frac{\| \rho_{\initial} \|_{L^\infty}}{\| \rho_{\initial} \|_{L^2}} \right)^2
    \int_{\mathbb{T}^d} \lambda_{\max}(x) \, dx,
\end{equation}
where $\lambda_{\max}(x)$ denotes the top Lyapunov exponent of $X_1^{-1}$ defined in Theorem~\ref{thm:LyapunovSobolev}.
\end{theorem}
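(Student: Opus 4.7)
The plan is to derive the mixing lower bound from the log-Sobolev regularity estimate of Theorem~\ref{thm:RLFAsymptoticRegularityThm} by means of a dyadic interpolation in Fourier space, using that the measure-preserving nature of $X_t$ conserves $\|\rho(t,\cdot)\|_{L^2}$. Since $b$ is divergence-free, the representation $\rho(t,\cdot) = \rho_{\mathrm{in}} \circ X_t^{-1}$ gives $\|\rho(t,\cdot)\|_{L^2} = \|\rho_{\mathrm{in}}\|_{L^2}$ for every $t \ge 0$, and the mean-free hypothesis makes $\|\rho(t,\cdot)\|_{\dot H^{-1}}$ well-defined.

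The key ingredient is the interpolation inequality
$$
\|\rho\|_{L^2}^2 \;\le\; K^2\,\|\rho\|_{\dot H^{-1}(\T^d)}^2 \;+\; \frac{C(d)}{\log K}\,\|\rho\|_{\dot H_{\mathrm{log}}(\T^d)}^2 \qquad \text{for every } K>2,
$$
valid for any mean-free $\rho$. To establish it, one first identifies $\|\rho\|_{\dot H_{\mathrm{log}}}^2$ with its Fourier-side counterpart $\sum_{k\neq 0}\log(2+|k|)|\hat\rho(k)|^2$ up to dimensional constants — this is Plancherel applied to the definition \eqref{eq:HomogeneousLogSobolevNorm} together with the elementary asymptotic $\int_{B_{1/5}}|h|^{-d}|e^{2\pi i k\cdot h}-1|^2\,dh \sim_d \log(2+|k|)$ — and then splits the Plancherel sum for $\|\rho\|_{L^2}^2$ at frequency $|k|=K$, bounding $1 \le K^2|k|^{-2}$ on the low frequencies and $1 \le \log(2+|k|)/\log K$ on the high frequencies.

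Applying this to $\rho(t,\cdot)$ with threshold $K(t)$ chosen so that $\log K(t) = 2C(d)\,\|\rho(t,\cdot)\|_{\dot H_{\mathrm{log}}}^2/\|\rho_{\mathrm{in}}\|_{L^2}^2$ — an admissible choice for large $t$, since $\|\rho(t,\cdot)\|_{\dot H_{\mathrm{log}}}<\infty$ by the propagation estimate \eqref{eq:standardpropagation} and the embedding $\BV \hookrightarrow \dot H_{\mathrm{log}}$ — forces the second term in the interpolation to absorb at most half of $\|\rho_{\mathrm{in}}\|_{L^2}^2$, so that $\|\rho(t,\cdot)\|_{\dot H^{-1}} \ge \|\rho_{\mathrm{in}}\|_{L^2}/(\sqrt{2}\,K(t))$. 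Taking logarithms, dividing by $t$ and letting $t \to \infty$, the bounded constant contributions disappear and we obtain
$$
\liminf_{t\to\infty}\frac{1}{t}\log\|\rho(t,\cdot)\|_{\dot H^{-1}} \;\ge\; -\frac{2C(d)}{\|\rho_{\mathrm{in}}\|_{L^2}^2}\, \limsup_{t\to\infty} \frac{\|\rho(t,\cdot)\|_{\dot H_{\mathrm{log}}}^2}{t}.
$$
The conclusion \eqref{eq:MainEqOfRLFAsymptoticMixingThm} then follows at once by substituting Theorem~\ref{thm:RLFAsymptoticRegularityThm} into the right-hand side, the dimensional constants grouping into a single $C(d)$.

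No individual step is a serious obstacle — the Fourier computation and the frequency-splitting interpolation are classical — and the main conceptual content lies in recognizing that the linear-in-$t$ growth encoded in Theorem~\ref{thm:RLFAsymptoticRegularityThm} converts, after taking the logarithm of the interpolation inequality, into exactly the desired exponential-in-$t$ lower bound on $\|\rho(t,\cdot)\|_{\dot H^{-1}}$. The ratio $(\|\rho_{\mathrm{in}}\|_{L^\infty}/\|\rho_{\mathrm{in}}\|_{L^2})^2$ appearing in the final estimate arises naturally: the numerator comes from the $L^\infty$ factor in Theorem~\ref{thm:RLFAsymptoticRegularityThm}, while the denominator records the conserved $L^2$-mass of the advected scalar.
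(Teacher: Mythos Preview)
Your proposal is correct and follows essentially the same route as the paper: combine an interpolation inequality linking $\|\rho\|_{L^2}$, $\|\rho\|_{\dot H^{-1}}$ and $\|\rho\|_{\dot H_{\log}}$ with the conservation $\|\rho(t,\cdot)\|_{L^2}=\|\rho_{\mathrm{in}}\|_{L^2}$ and the asymptotic log-Sobolev bound of Theorem~\ref{thm:RLFAsymptoticRegularityThm}. The paper packages the interpolation as Lemma~\ref{lemma:MixingBoundByIntegral}, namely $\log(2+\|f\|_{L^2}/\|f\|_{\dot H^{-1}})\,\|f\|_{L^2}^2 \le C(d)\|f\|_{\dot H_{\log}}^2$, quoted from \cite[Corollary~3.7]{EBQHN21}, whereas you derive the equivalent two-term form by Fourier splitting at frequency $K$ and then optimize in $K$; after optimization your inequality is exactly Lemma~\ref{lemma:MixingBoundByIntegral} up to constants, so the arguments coincide.
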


Theorem~\ref{thm:RLFAsymptoticMixingThm} establishes an asymptotic version of Bressan's mixing conjecture~\cite{AB03}. Indeed, by combining it with the estimate of the integral of the top Lyapunov exponents in terms of the $L^1$-norm of the gradient of the velocity field (see~\eqref{eq:EstimateOnTopLyapunovExp}), we obtain the following consequence: there exists a time $T > 0$, depending on the velocity field $b(t,x)$ and the initial datum $\rho_{\mathrm{in}}(x)$, such that
\begin{equation}\label{eqn:analytmixing}
    \| \rho(t,\cdot) \|_{\dot{H}^{-1}(\mathbb{T}^d)}
    \ge \exp\left( -C(d, \rho_{\mathrm{in}}) \, t \fint_0^1 \int_{\mathbb{T}^d} |\nabla b(t,x)| \, dx \, dt \right)
    \quad \text{for all } t \ge T.
\end{equation}

Although this result does not fully resolve Bressan's conjecture, due to the restriction $t \ge T$, it provides an asymptotic version of the conjectured lower bound.

\begin{remark} In Bressan's conjecture~\cite{AB03} and some related literature, see for instance \cite{GCCDL08, WC23, TEKLJM25}, mixing is measured in terms of the geometric mixing scale, for densities taking only values $ \pm 1$. 
We remark that \eqref{eqn:analytmixing} still holds if $  \| \rho(t,\cdot) \|_{\dot{H}^{-1}(\mathbb{T}^d)}$ is replaced by the geometric mixing scale of $\rho(t,\cdot)$ in this context, giving also in this phrasing an asymptotic version of Bressan's mixing conjecture. The notion of geometric mixing and the result are discussed in Subsection~\ref{subsec:AsympMixingGeneralAsymptoticMixingThmProof} and in Theorem~\ref{thm:AsymptoticMixingMPS}, Equation~\eqref{eq:GeometricMixingInThmForMaps} below.
\end{remark}

\medskip

Theorem~\ref{thm:RLFAsymptoticMixingThm} can also be interpreted as a quantitative lower bound on the maximal Lyapunov exponent, inferred from the exponential mixing rate of a single observable. Suppose there exists an initial datum $\rho_{\initial} \in L^\infty \cap \BV(\mathbb{T}^d)$ that undergoes exponential mixing:
\begin{equation}
    \| \rho(t,\cdot) \|_{H^{-1}} \le C e^{-t \beta}, \quad t \ge 0,
\end{equation}
for some $\beta > 0$. Then Theorem~\ref{thm:RLFAsymptoticMixingThm} implies 
\begin{equation}
    \int_{\mathbb{T}^d} \lambda_{\max}(x) \, dx 
    \ge C(d) \left( \frac{\| \rho_{\initial} \|_{L^2(\mathbb{T}^d)}}{\| \rho_{\initial} \|_{L^\infty(\mathbb{T}^d)}} \right)^2 \beta.
\end{equation}

In \cite[Proposition 1.4]{DDAKFRH24}, Dolgopyat, Kanigowski, and Rodriguez-Hertz establish a similar lower bound on the top Lyapunov exponent under the stronger assumption of \textit{universal exponential mixing}, where the decay estimate holds for every initial datum. 

In Section~\ref{sec:mixingandreg}, we prove a more general version of Theorem~\ref{thm:RLFAsymptoticMixingThm}, applicable to general measure-preserving maps (see Theorem~\ref{thm:AsymptoticMixingMPS}). In particular, our framework encompasses maps that may be more singular than those considered in \cite[Proposition 1.4]{DDAKFRH24}. However, we restrict attention to systems whose stationary measure is the Lebesgue measure $\Leb^d$.

\subsection{Connections to previous Literature}
In the fluid dynamics literature, tools from the theory of (stochastic) dynamical systems have been used in a multitude of contexts. Without aiming to be exhaustive, we highlight some of them:
\begin{itemize}
\item \emph{Stochastically forced fluid models.}
Lyapunov exponents and mixing have been studied in stochastically forced fluid models, see for instance \cite{JBABSPS22, JBABSPS21, WCKR24}.
Strongly related, concepts from passive scalar turbulence (see \cite{AMO49, SC51, GKB59}) have been investigated in \cite{JBABSPS22B, ABKKH24, WCKR25} mainly motivated by classical fluid turbulence.

\item \emph{Mixing in randomised velocity fields.}
Exponential mixing has been established for various stochastic velocity fields, e.g. the Pierrehumbert model \cite{RTP94} which alternates between vertical and horizontal sinusoidal shear flows where the phase shifts are random variables, see \cite{ABMCZRG23}. Many other stochastic velocity fields have been studied, see for instance \cite{MCZVNF24, VNFCS25}.

\item \emph{Mixing in low regularity velocity fields.}
Studying the time-one flow map of a 1-periodic deterministic velocity field, as in this manuscript, has been done previously. In \cite{TMEAZ19}, the authors construct a Sobolev velocity field whose time-one flow map is exponentially mixing. In \cite{TEKLJM25}, a Lipschitz velocity field having the same property is constructed. Properties of time-one flow maps of BV velocity fields are studied in ~\cite{SBMZ22, MZ22}, proving in particular that weak mixing is generic while mixing is not.
\end{itemize}

\subsection{Notation}
Throughout the manuscript, $C$ denotes a universal constant which may vary from line to line. We write $C(\ast)$ for a constant which depends only on the parameters $\ast$. 
We mostly work on the $d$-dimensional torus $\T^d$ with the geodesic distance $\metric$. For certain calculations, it is convenient to lift to the universal cover together with the Euclidean distance and when it is, we will do it tacitly, see for instance Lemma~\ref{lemma:UsefulLemmaAboutTheGradientBeingAGoodApprox}.

\subsection*{Acknowledgments}
MC and CJ were supported by the Swiss State Secretariat for Education, Research and Innovation (SERI) under contract number MB22.00034 through the project TENSE. 

\section{Framework and motivations}\label{sec:FrameworkMotivations}

We introduce a class of dynamical systems that includes flows generated by incompressible Sobolev velocity fields \ref{eq:vectorintro}. This class will be the primary focus of our work. In Section~\ref{sec:MapsSing}, we discuss its relationship with other singular frameworks previously studied in the theory of dynamical systems.
\subsection{Singular Maps}\label{sec:singmaps}

Let $(M, \metric)$ denote a metric measure space, equipped with the Borel $\sigma$-algebra $\mathcal{B}$ and a reference probability measure $\mu$. We consider a measure-preserving map $T: M \to M$, i.e., $T$ is measurable and satisfies
\begin{equation}
    T_{\#}\mu(A) := \mu(T^{-1}(A)) = \mu(A) \quad \forall A \in \mathcal{B}.
\end{equation}
Additionally, $T$ satisfies the following regularity assumption:

\begin{assumption}\label{assump:LusinLip}
There exists a function $g: M \to [0, \infty]$ such that:
\begin{enumerate}
    \item $g \in L^p(M, \mu)$ for some $p \geq 1$,
    \item For every $x, y \in M$, the following inequality holds:
    \begin{equation}\label{eq:LusinLipschitzInequalityInAssumption}
        \metric (T(x), T(y)) \leq e^{g(x) + g(y)} \metric(x, y).
    \end{equation}
\end{enumerate}
\end{assumption}

Even though Assumption~\ref{assump:LusinLip} is weaker than any Sobolev-type bound, it implies \textit{weak differentiability} of the map $T$ (see Section~\ref{sec:NotionsDifferentiabilityAndLyapunov}). Notice that Assumption~\ref{assump:LusinLip} is trivially satisfied when $T$ is Lipschitz regular.

\begin{remark}[Subadditivity Under Composition]

Consider two measure-preserving maps $T, S: M \to M$, each satisfying Assumption~\ref{assump:LusinLip} with functions $g_T, g_S \in L^p(M, \mu)$, respectively. Then, for all $x, y \in M$, the composition $T \circ S$ satisfies
\begin{equation}
    \metric((T \circ S)(x), (T \circ S)(y)) \leq \exp \big( g_T(S(x)) + g_T(S(y)) + g_S(x) + g_S(y) \big) \metric(x, y).
\end{equation}
Hence, $T \circ S$ satisfies Assumption~\ref{assump:LusinLip} with the function $g(x) = g_T(S(x)) + g_S(x)$, and the $L^p$-norm satisfies
\begin{equation}
    \| g \|_{L^p} \leq \| g_T \circ S \|_{L^p} + \| g_S \|_{L^p} = \| g_T \|_{L^p} + \| g_S \|_{L^p}.
\end{equation}
\end{remark}

 All the theorems stated in the introduction in terms of the regular Lagrangian flow will have a more general analogue in the sequel, stated in terms of {a dynamical system with} a map $T$ satisfying Assumption~\ref{assump:LusinLip} below. The correspondence of the various theorems is described in the following Table.

\begin{table}[h!]
\centering
\begin{tabular}{| c || c c c |}
\hline
  & Regular  & $\Leftarrow$ & General maps satisfying \\ 
  &Lagrangian Flows & & Assumption~\ref{assump:LusinLip} \\
 \hline  \hline
 Lyapunov exponents & Theorem~\ref{thm:LyapunovSobolev} & & Theorem~\ref{thm:LyapunovFirst} \\
 Estimates of entropy &  &  & Theorem~\ref{thm:BoundEntropyGeneral} \\  
 Ruelle's inequality & Theorem~\ref{thm:RuelleRLF} &  & Theorem~\ref{thm:RuelleGeneral} \\
 Asymptotic regularity & Theorem~\ref{thm:RLFAsymptoticRegularityThm} &  & Theorem~\ref{thm:AsymptoticRegularityMPS} \\
 Asymptotic mixing  & Theorem~\ref{thm:RLFAsymptoticMixingThm} &  & Theorem~\ref{thm:AsymptoticMixingMPS} \\
 \hline
\end{tabular}
\end{table}

For simplicity, throughout most of this work we restrict our attention to the specific case $M = \mathbb{T}^d$.

\subsection{Regular Lagrangian Flows}

The primary motivation for studying measure-preserving systems satisfying Assumption~\ref{assump:LusinLip} stems from the analysis of Regular Lagrangian flows associated with periodic velocity fields.
We consider a divergence-free velocity field $b \in L^1_{\text{loc}}(\mathbb{R}_+; \BV(\mathbb{T}^d; \mathbb{R}^d))$ on the torus of dimension $d\ge 2$.

\begin{definition}[Regular Lagrangian Flow \cite{A04}]
 A Borel map $X \colon \R_+ \times \R_+ \times \T^d \to \T^d$ is called a regular Lagrangian flow (RLF) associated with $b(t,x)$ if the following properties hold for all $s \in \R_+$:
 \begin{enumerate}
   \item for any $t \in \R_+$, the map $x \mapsto X(t,s,x)$ preserves the Lebesgue measure;
   \item for $\Leb^d$-a.e. $x \in \T^d$, the map $t \mapsto X(t,s,x)$ is an absolutely continuous curve which solves 
   \begin{equation}
    X(t,s,x) = x + \int_s^t b(r, X(r, s, x)) \, d r \quad \forall t \in \R_+.
   \end{equation}
 \end{enumerate}
\end{definition}
To shorten notation, we write $X_{t,s} \colon \T^d \to \T^d$ in place of $X_{t,s}(x) = X(t,s,x)$. When $s = 0$, we write $X_{t} = X_{t,0}$.
If $b$ is $1$-periodic in time, i.e., $b(t+1,x) = b(t,x)$ for almost every $x\in \T^d$ and $t\in \R_+$,  then for every integer $k\ge 1$ it holds
$X_{(k+1), k} = X_{1},$ {a.e. in }$\T^d$\, .
Hence, for every integer $n\ge 0$ we deduce
\begin{equation}
 X_{n, 0} = X_{1}\circ \ldots \circ X_{1} = (X_{1})^n ,\quad \text{a.e. in }\T^d\, .
\end{equation}

Additionally, the measure-preserving system $(\mathbb{T}^d, \mathcal{B}, \mathcal{L}^d, X_{1})$ satisfies Assumption~\ref{assump:LusinLip}. This was first established in~\cite{GCCDL08} using a slightly different formalism. We refer the reader to~\cite[Proposition 2.9]{EBQHN21} for a proof of the statement.

\begin{lemma}\label{lemma:LusinLipFlows}
 Let $p \in (1, \infty]$ and $b \in L^1([0,T] ; W^{1,p}(\T^d; \R^d))$ divergence-free. 
 Then there exists a Borel function $g:\T^d \to [0,\infty]$
 such that the following hold:
 \begin{equation}\label{eq:LusinLipflows}
     \begin{split}
       \| g \|_{L^p(\T^d)} &\leq C(d,p) \| \nabla b \|_{L^1([0, T]; L^{p}(\T^d; \R^{d \times d}))}
       \\
     e^{- g(x) - g(y)} &\leq \frac{\metric(X_{\tau,0}(x), X_{\tau,0}(y))}{\metric(x,y)} \leq e^{g(x) + g(y)} \quad \forall x \neq y \in \T^d, \, \tau \in [0, T].  
     \end{split}
 \end{equation}
 Here $X$ denotes the RLF generated by $b$, and $\metric(x,y)$ is the geodesic distance of the flat torus. 
\end{lemma}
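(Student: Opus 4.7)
The plan is to adapt the Crippa--De Lellis Lusin--Lipschitz type estimate (in the refined formulation of~\cite{EBQHN21}) and build $g$ directly from the maximal function of $\nabla b$ composed with the flow.

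\textbf{Step 1 (pointwise inequality).} Recall that for any $f\in W^{1,1}(\T^d;\R)$ there exists a representative and a Lebesgue negligible set $N$ such that for all $x,y\in\T^d\setminus N$,
\begin{equation}
|f(x)-f(y)|\le C(d)\,\metric(x,y)\bigl(M|\nabla f|(x)+M|\nabla f|(y)\bigr),
\end{equation}
where $M$ denotes the (uncentered) Hardy--Littlewood maximal function on $\T^d$. Applying this coordinatewise to $b(t,\cdot)$ for a.e.\ $t\in[0,T]$, I obtain an analogous bound for $|b(t,\cdot)|$.

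\textbf{Step 2 (differential inequality along the flow).} Fix a set $\Gamma\subset\T^d$ of full measure on which the RLF is defined for every $\tau\in[0,T]$ and $t\mapsto X_t(x)$ is absolutely continuous. For $x,y\in\Gamma$, the map $t\mapsto \metric(X_t(x),X_t(y))$ is absolutely continuous, and wherever it is positive and differentiable,
\begin{equation}
\Bigl|\tfrac{d}{dt}\log\metric(X_t(x),X_t(y))\Bigr|\le \frac{|b(t,X_t(x))-b(t,X_t(y))|}{\metric(X_t(x),X_t(y))}.
\end{equation}
Combining this with Step~1 applied to $b(t,\cdot)$, evaluated at the points $X_t(x),X_t(y)$, gives
\begin{equation}
\Bigl|\tfrac{d}{dt}\log\metric(X_t(x),X_t(y))\Bigr|\le C(d)\bigl(M|\nabla b(t,\cdot)|(X_t(x))+M|\nabla b(t,\cdot)|(X_t(y))\bigr).
\end{equation}
A small subtlety: Step~1 holds only off a null set, but since $(X_t)_\#\Leb^d=\Leb^d$ for each $t$, a Fubini argument guarantees that the exceptional set contributes nothing after integration in $(x,y,t)$; thus the inequality holds for a.e.\ pair $(x,y)\in\T^d\times\T^d$.

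\textbf{Step 3 (definition of $g$).} Set
\begin{equation}
g(x):= C(d)\int_0^T M|\nabla b(t,\cdot)|(X_t(x))\,dt,
\end{equation}
with the constant from Step~2. Integrating the differential inequality of Step~2 over $[0,\tau]$ yields, for a.e.\ $x\ne y$ and every $\tau\in[0,T]$,
\begin{equation}
\Bigl|\log\frac{\metric(X_{\tau,0}(x),X_{\tau,0}(y))}{\metric(x,y)}\Bigr|\le g(x)+g(y),
\end{equation}
which is exactly~\eqref{eq:LusinLipflows}, up to redefining $g$ on a null set to be everywhere defined and Borel measurable.

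\textbf{Step 4 ($L^p$ estimate of $g$).} By Minkowski's integral inequality and the measure-preserving property $(X_t)_\#\Leb^d=\Leb^d$,
\begin{equation}
\|g\|_{L^p(\T^d)}\le C(d)\int_0^T \bigl\|M|\nabla b(t,\cdot)|\circ X_t\bigr\|_{L^p}\,dt
=C(d)\int_0^T\bigl\|M|\nabla b(t,\cdot)|\bigr\|_{L^p}\,dt.
\end{equation}
Here the hypothesis $p>1$ enters crucially: the Hardy--Littlewood maximal operator is bounded on $L^p(\T^d)$ for $p>1$, giving $\|M|\nabla b(t,\cdot)|\|_{L^p}\le C(d,p)\|\nabla b(t,\cdot)\|_{L^p}$, and therefore
\begin{equation}
\|g\|_{L^p(\T^d)}\le C(d,p)\|\nabla b\|_{L^1([0,T];L^p(\T^d;\R^{d\times d}))}.
\end{equation}

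\textbf{Main obstacle.} The only genuinely delicate point is Step~2: the Lusin-type inequality from Step~1 holds off a null set that a priori depends on $t$, so one cannot invoke it naively along the a.e.\ defined trajectories. The standard remedy is to work on a sufficiently regular modification of $b$ (e.g.\ via approximation by mollification, where the bounds are uniform), pass the differential inequality to the limit in the Lagrangian formulation, and then use the incompressibility of $b$ to control the composition with $X_t$ by Fubini. The passage to geodesic distance on the torus introduces no further difficulty since~\eqref{eq:LusinLipflows} is a local statement and $\metric$ agrees with the Euclidean distance at scales below the injectivity radius.
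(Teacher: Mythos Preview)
Your sketch is correct and follows the standard Crippa--De Lellis argument that underlies the cited reference~\cite[Proposition~2.9]{EBQHN21}; the paper does not give its own proof but simply refers to that result and remarks that the passage from the Euclidean to the geodesic distance on $\T^d$ is immediate via the periodicity relation $X_{t,s}(x+k)-k=X_{t,s}(x)$. Your treatment of the null-set issue in Step~2 (approximation plus Fubini, then setting $g=+\infty$ on the exceptional set so that~\eqref{eq:LusinLipflows} holds for \emph{all} $x\neq y$) is the right way to close the gap between ``a.e.\ pair'' and the pointwise statement.
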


In~\cite[Proposition 2.9]{EBQHN21}, the regularity estimate~\eqref{eq:LusinLipflows} was formulated in terms of the Euclidean distance in $\mathbb{R}^d$, identifying $b$ as a space-periodic velocity field in $\mathbb{R}^d$. However, it can be readily shown that this estimate implies the corresponding estimate in terms of the distance on the torus, as the Regular Lagrangian Flow (RLF) satisfies $X_{t,s}(x + k) - k = X_{t,s}(x)$ for every integer vector $k \in \mathbb{Z}^d$, a consequence of the space periodicity of $b$.

\begin{remark}\label{rmk:strongerAssumption}
From Lemma~\ref{lemma:LusinLipFlows}, it follows immediately that the map $T := X_{1}$ satisfies Assumption~\ref{assump:LusinLip}. Additionally, it satisfies the lower bound
\begin{equation}
    e^{-g(x) - g(y)} \metric(x, y) \leq \metric(T(x), T(y)) \quad \text{for all } x, y \in \mathbb{T}^d,
\end{equation}
which corresponds to the same regularity condition for the inverse map.
\end{remark}

\subsection{Maps with Singularities}
\label{sec:MapsSing}
Maps with singularities such as billiards have been subject of intense study (see for instance \cite{YGS70, AKJMSFLFP86, LBYGSNC90, CL95, LSY98, NC99, RM04, PBIPT08, NCZHK09, MDHZ14, VBMFDCL18, VBMFD20, VCMFDYLHZ24, EAYLMP24} and the references therein) and are strongly related to statistical mechanics.
In the book \cite{AKJMSFLFP86}, A. Katok and J.-M. Strelcyn study a class of maps with singularities. 
In particular, the authors prove Ruelle's inequality for such maps. 
They impose the following conditions: Let $(M, \metric)$ be a metric space and assume $M$ contains a dense Riemannian manifold $V$ of finite dimension. Consider an open subset $N \subseteq V$ and a differentiable map $T \colon N \to V$. Some further assumptions are made in \cite{AKJMSFLFP86} which we ignore here for the sake of simplicity. This map $T$ is the map which is studied. The singular set is
\[
 S = M \setminus N.
\]
Let $\mu$ be an invariant measure with respect to $T$, i.e. $T_{\#} \mu = \mu$.
Additionally, it is assumed that
\begin{enumerate}
 \item $\mu(B_{\eps}(S)) \leq C \eps^{a}$ for some $a > 0$; \label{item:KSLP-condition1} \medskip
 \item $\displaystyle\int_M \log^+ |\nabla T| \, d \mu < \infty$ and $\displaystyle\int_M \log^+ |\nabla (T^{-1})| \, d \mu < \infty$;
\end{enumerate} 
as well as other minor technical assumptions, which we omit here for simplicity. To prove Ruelle's inequality in this setting, the authors of~\cite{AKJMSFLFP86} additionally assume

\begin{enumerate} \setcounter{enumi}{2}
 \item $|\nabla T(x)| \leq C \metric(x, S)^{-q}$ for some $q \geq 1$. \label{item:KSLP-condition4}
\end{enumerate} 

The framework of \cite{AKJMSFLFP86} allows for domains with singularities, which we do not consider here for the sake of simplicity. Nevertheless, such cases appear to be treatable, as the DiPerna–Lions theory has been extended to metric measure spaces with synthetic curvature assumptions \cite{LADT14, EBDS20}. When restricted to singular maps on smooth manifolds, our regularity assumptions on the map significantly generalize those in \cite{AKJMSFLFP86}, as the following result demonstrates.

\begin{lemma}\label{lemma:RuelleWithSingularities}
Let $M$ be a smooth Riemannian manifold and $(M, \Borel, \mu, T)$ be a measure-preserving system such that \ref{item:KSLP-condition1} and \ref{item:KSLP-condition4} are satisfied. Then there exists $g \in \bigcap_{p \in [1, \infty)} L^p(M,\mu)$
such that
\begin{equation}\label{eq:AsymLusinLipForRuelle}
 |T(x) - T(y)| \leq \e^{g(x)} \metric(x,y) \quad \forall x,y \in M \setminus S.
\end{equation}
In particular, $(M, \Borel, \mu, T)$ satisfies Assumption~\ref{assump:LusinLip} for any $p \in [1,\infty)$.
\end{lemma}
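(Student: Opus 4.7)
The plan is to construct $g$ as $A + q\log^{+}\!\bigl(1/\metric(\cdot,S)\bigr)$ for a suitable constant $A$, then verify \eqref{eq:AsymLusinLipForRuelle} by a two-case split (near vs.\ far, measured by $\metric(x,y)$ relative to $\metric(x,S)$), and finally establish $L^p$ integrability via the layer-cake formula using condition \ref{item:KSLP-condition1}.

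First I would fix $x,y \in M\setminus S$ with $x\ne y$ and split into cases. If $\metric(x,y) < \tfrac{1}{2}\metric(x,S)$ (near regime), the minimizing geodesic $\gamma$ joining $x$ to $y$ lies inside the ball $B_{\metric(x,S)/2}(x)$, so every $z\in\gamma$ satisfies $\metric(z,S)\ge \metric(x,S)/2$ and hence $\gamma\subset N$. The Riemannian mean value inequality combined with condition \ref{item:KSLP-condition4} then yields
\[
\metric(T(x),T(y)) \le \Bigl(\sup_{z\in\gamma}|\nabla T(z)|\Bigr)\metric(x,y) \le C\,2^{q}\,\metric(x,S)^{-q}\,\metric(x,y).
\]
If instead $\metric(x,y)\ge \tfrac{1}{2}\metric(x,S)$ (far regime), compactness of $M$ forces $\metric(T(x),T(y)) \le \diam(M) =: D$, so $\metric(T(x),T(y))/\metric(x,y) \le 2D/\metric(x,S)$. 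Choosing $A := \log\max\{C\,2^{q},\,2D\}$ and using $q\ge 1$, the factor $e^{A + q\log^+(1/\metric(x,S))}$ dominates both $C 2^{q}\metric(x,S)^{-q}$ and $2D/\metric(x,S)$, which proves \eqref{eq:AsymLusinLipForRuelle}.

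Next, the $L^p$ integrability of $g$ reduces to showing $\log^{+}(1/\metric(\cdot,S))\in L^p(\mu)$ for every $p\in[1,\infty)$. By the layer-cake formula and condition \ref{item:KSLP-condition1},
\[
\int_M \bigl(\log^{+}(1/\metric(x,S))\bigr)^p d\mu = \int_0^\infty p\,t^{p-1}\,\mu\bigl(\{\metric(x,S) < e^{-t}\}\bigr)\,dt \le C\int_0^\infty p\,t^{p-1} e^{-at}\,dt < \infty,
\]
so $g\in \bigcap_{p\in[1,\infty)} L^p(M,\mu)$. Since $g\ge 0$, the inequality \eqref{eq:AsymLusinLipForRuelle} trivially implies the symmetric bound $\metric(T(x),T(y))\le e^{g(x)+g(y)}\metric(x,y)$, verifying Assumption~\ref{assump:LusinLip}.

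The main obstacle is technical rather than conceptual: one must ensure that in the near regime a minimizing geodesic from $x$ to $y$ actually exists and stays inside $N$, which is automatic once the standard geometric hypotheses of the Katok–Strelcyn framework (compactness of $M$, positive injectivity radius) are in force. A minor interpretive point is that we read $|T(x)-T(y)|$ as the geodesic distance $\metric(T(x),T(y))$ on the manifold $M$; with this convention the proof is a direct combination of the mean value inequality and a one-dimensional distribution-function estimate, and no delicate local analysis near $S$ is needed.
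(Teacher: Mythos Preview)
Your proof is correct and follows essentially the same approach as the paper's: both define $g$ in terms of the distance to the singular set, split into a near regime (where the gradient bound \ref{item:KSLP-condition4} controls the Lipschitz ratio via the mean value inequality) and a far regime (where the diameter bound suffices), and deduce $L^p$ integrability from condition \ref{item:KSLP-condition1}. The only cosmetic difference is that the paper works with a piecewise-constant $g$ on dyadic shells $U_j=B_{2^{-j}}(S)$ and sums over shells for the $L^p$ estimate, whereas your continuous choice $g=A+q\log^{+}(1/\metric(\cdot,S))$ together with the layer-cake formula is a slightly cleaner packaging of the same computation.
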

\begin{proof}[Proof of Lemma~\ref{lemma:RuelleWithSingularities}]
We define the sets $U_0 = M$,  $U_j = B_{2^{-j}}(S)$ for all $j \geq 1$.
By \ref{item:KSLP-condition1} and \ref{item:KSLP-condition4}, we have that
\[
 \mu(U_j) \leq C 2^{-ja} \quad \text{and} \quad \| \nabla T \|_{L^{\infty}(M \setminus U_j)} \leq C 2^{jq}.
\]
We give now a natural definition for the function $g$: in each $U_j \setminus U_{j+1}$, it corresponds to the maximum possible value of $|\nabla T|$ there, up to an additive constant. More precisely, we set
\[
 g(x) =\log ( C d 2^{(j+3) q} )\quad \text{for every $x \in U_j \setminus U_{j+1}$,}
\]
We show that \eqref{eq:AsymLusinLipForRuelle} holds: to this end, let $x,y \in M \setminus S$, and without loss of generality assume that $x \in  U_j \setminus U_{j+1}$ for some $j \in \N$. Since from the definitions above we have that $B_{2^{-(j+2)}}(x) \subseteq M \setminus U_{j+2}$, for any $y \in B_{2^{-(j+2)}}(x)$, we have
\[
 \metric(T(x) , T(y)) \leq \| \nabla T \|_{L^{\infty}(B_{2^{-(j+2)}}(x))} d(x,y) \leq C 2^{(j+2)q} \metric(x,y).
\]
For any $y \in M \setminus B_{2^{-(j+2)}}(x)$, we have
\[
 \metric(T(x) , T(y)) \leq 2d \leq d 2^{j+3} \metric(x,y).
\]
Finally $g$ belongs to $L^p(M, \mu)$ for all $p \in [1, \infty)$ since
\begin{align*}
 \int_{M} |g(x)|^p \, d \mu(x) &= \sum_{j = 0}^{\infty} \int_{U_j \setminus U_{j+1}} |g(x)|^p \, d \mu(x) = \sum_{j = 0}^{\infty} \mu(U_j ) \left[ \log ( C 2^{(j+3)q}) \right]^p \\
 &= \sum_{j = 0}^{\infty} \mu(U_j) \left[ \log ( C ) + (j+2)q \log (2) \right]^p < \infty.
\end{align*}
\end{proof}

\begin{remark}
 It is clear from the proof above that the assumptions \ref{item:KSLP-condition1} and \ref{item:KSLP-condition4} can be relaxed while still satisfying Assumption~\ref{assump:LusinLip}. For instance, if we keep \ref{item:KSLP-condition1} identical, then there exists $\delta = \delta(a, p) > 0$ such that if \ref{item:KSLP-condition4} is replaced by
 \[
  |\nabla T (x)| \leq C \exp \left( d(x, S)^{- \delta} \right)
 \]
 then \eqref{eq:AsymLusinLipForRuelle} holds with some $g \in L^p(M, \mu)$.
\end{remark}

\section{Elements of Dynamical Systems and Ergodic Theory}
\label{sec:dynamical_systems}

We recall some key tools and concepts from the theory of dynamical systems and ergodic theory: the maximal ergodic function, the entropy, and the Oseledets multiplicative ergodic theorem.

In this section, $(M, \mathcal{A}, \mu)$ denotes an arbitrary probability space, and $(M, \mathcal{A}, \mu, T)$ represents a measure-preserving system, meaning that $\mu(T^{-1}(A)) = \mu(A)$ for all $A \in \mathcal{A}$.

\subsection{Maximal Ergodic Function}\label{subsec:MaxErgFct}

For every $g\in L^1(M,\mu)$, we define the maximal function
\begin{equation}
     g^{\star}(x) := \sup_{n \geq 1} \dfrac{1}{n} \sum_{i = 0}^{n-1} |g|(T^{i}(x)).
\end{equation}

A useful property of the maximal ergodic function is its almost invariance under the action of the measure-preserving map $T$: for all  $x \in M$
\begin{equation}
    g^\star(T(x)) = \sup_{n \geq 1} \frac{1}{n} \sum_{i=0}^{n-1} |g|(T^{i+1} x) = \sup_{n \geq 1} \frac{1}{n} \sum_{i=1}^{n} |g|(T^i x) \leq \sup_{n \geq 1} \frac{2}{n+1} \sum_{i=1}^{n} |g|(T^i x) \leq 2 g^\star(x).
\end{equation}

\begin{theorem}[Maximal Ergodic Theorem, {\cite[Theorem 2.24]{METW10}}]\label{thm:MaxErgThm}
 Let $(M, \mathcal{A}, \mu, T)$ be a measure-preserving system and $g \in L^1(M,\mu)$. Then for all $\lambda\ge 0$, we have
 \begin{equation}\label{eq:weakL1}
  \mu \left( \left\{ x \in M : g^{\star}(x) > \lambda \right\} \right) \leq \dfrac{\| g \|_{L^1(M,\mu)}}{\lambda}.
 \end{equation}
\end{theorem}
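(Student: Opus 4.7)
First, I would reduce the weak-type bound to the classical \emph{Hopf maximal ergodic lemma}: for any $f \in L^1(M,\mu)$, if $S_n(x) := \sum_{i=0}^{n-1} f(T^i x)$ and $E := \{x : \sup_{n\ge 1} S_n(x) > 0\}$, then $\int_E f \, d\mu \ge 0$. Assuming $\lambda > 0$ (the case $\lambda = 0$ being vacuous), set $f := |g| - \lambda \in L^1(M,\mu)$; then, pulling the constant $\lambda$ inside the sum,
\begin{equation*}
\{g^\star > \lambda\} \;=\; \Bigl\{x : \sup_{n \ge 1} \tfrac{1}{n}\sum_{i=0}^{n-1} |g|(T^i x) > \lambda \Bigr\} \;=\; \Bigl\{x : \sup_{n\ge 1} S_n(x) > 0\Bigr\} \;=\; E.
\end{equation*}
Granting Hopf's lemma, we obtain $0 \le \int_E (|g| - \lambda)\,d\mu \le \|g\|_{L^1(M,\mu)} - \lambda \mu(E)$, which is exactly \eqref{eq:weakL1}.

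For Hopf's lemma itself, the plan is to truncate and work with the maximum $M_n(x) := \max_{0 \le k \le n} S_k(x)$, with the convention $S_0 \equiv 0$, so that $M_n \ge 0$ pointwise and $M_n \in L^1(M,\mu)$ (since $M_n \le \sum_{i=0}^{n-1} |f| \circ T^i$). On $\{M_n > 0\}$ the maximum is attained at some index $k \ge 1$, and the cocycle identity $S_k(x) = f(x) + S_{k-1}(Tx)$ combined with $M_n(Tx) \ge \max_{0 \le j \le n-1} S_j(Tx)$ yields the key pointwise estimate $M_n(x) \le f(x) + M_n(Tx)$ on $\{M_n > 0\}$, i.e.\ $f \ge M_n - M_n \circ T$ there. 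Integrating, extending the integral of $M_n$ to all of $M$ (since $M_n \equiv 0$ off $\{M_n>0\}$), using $M_n \circ T \ge 0$ to drop the restriction on the other term, and finally invoking the $T$-invariance of $\mu$:
\begin{equation*}
\int_{\{M_n>0\}} f\,d\mu \;\ge\; \int_M M_n\,d\mu - \int_M M_n \circ T\,d\mu \;=\; 0.
\end{equation*}
Since $\{M_n > 0\} \nearrow E$ as $n \to \infty$, dominated convergence (with $|f|$ as a dominating function) then delivers $\int_E f\,d\mu \ge 0$.

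The main obstacle is purely combinatorial: justifying the telescoping-type bound $M_n \le f + M_n \circ T$ on $\{M_n > 0\}$ and carefully tracking signs when extending integrals from $\{M_n > 0\}$ back to all of $M$. The inequality $f \ge M_n - M_n\circ T$ can fail outside $\{M_n > 0\}$, so the restriction to that set is essential; conversely, the extensions in the two subsequent integrals must go in opposite directions, exploiting $M_n \equiv 0$ off the set on one side and $M_n \circ T \ge 0$ on the other. Once this bookkeeping is set up, the measure-preserving property closes the estimate to $0$ and a monotone passage to the limit concludes the argument.
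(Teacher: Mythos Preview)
Your proof is correct and follows the standard textbook argument (Garsia's proof of Hopf's maximal ergodic lemma, followed by the reduction with $f = |g| - \lambda$). The paper does not give its own proof of this statement at all; it simply cites it as \cite[Theorem~2.24]{METW10}, and the argument you wrote is essentially the one found in that reference.
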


Interpolating \eqref{eq:weakL1} with $\| g^{\star} \|_{L^{\infty}(M,\mu)} \leq \| g \|_{L^{\infty}(M,\mu)}$, we obtain the following $L^p$-$L^p$ estimate.

\begin{corollary}\label{cor:MaxErgFct}
Let $p\in (1,\infty)$. For every $g\in L^p(M,\mu)$, the following estimate holds:
\begin{equation}
    \| g^\star\|_{L^p(M,\mu)} \le C(p) \| g\|_{L^p(M,\mu)}.
\end{equation}
\end{corollary}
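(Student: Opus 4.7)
The plan is to run the standard Marcinkiewicz interpolation argument, splitting $g$ into a bounded part and a tail, and then using the weak $L^1$ bound \eqref{eq:weakL1} together with the trivial $L^\infty$ bound on the tail. Concretely, for each $\lambda>0$ I would decompose
\begin{equation*}
g = g_\lambda^{\rm low} + g_\lambda^{\rm high}, \qquad g_\lambda^{\rm high} := g \, \mathbf{1}_{\{|g| > \lambda/2\}},
\end{equation*}
so that $\|g_\lambda^{\rm low}\|_{L^\infty} \le \lambda/2$. By sublinearity of $g \mapsto g^\star$ (which is immediate from the definition, since the supremum of a sum is bounded by the sum of suprema of the absolute values), we obtain $g^\star \le (g_\lambda^{\rm low})^\star + (g_\lambda^{\rm high})^\star$, and since $(g_\lambda^{\rm low})^\star \le \|g_\lambda^{\rm low}\|_{L^\infty} \le \lambda/2$, it follows that
\begin{equation*}
\{ g^\star > \lambda \} \subseteq \{ (g_\lambda^{\rm high})^\star > \lambda/2 \}.
\end{equation*}

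Next I would apply the Maximal Ergodic Theorem \ref{thm:MaxErgThm} to $g_\lambda^{\rm high}$, which gives
\begin{equation*}
\mu\bigl(\{ g^\star > \lambda \}\bigr) \le \frac{2}{\lambda} \|g_\lambda^{\rm high}\|_{L^1(M,\mu)} = \frac{2}{\lambda} \int_{\{|g| > \lambda/2\}} |g| \, d\mu.
\end{equation*}
Then I would integrate this in $\lambda$ using the layer-cake identity
\begin{equation*}
\|g^\star\|_{L^p(M,\mu)}^p = p \int_0^\infty \lambda^{p-1} \mu\bigl(\{ g^\star > \lambda \}\bigr) \, d\lambda,
\end{equation*}
and swap the order of integration via Fubini:
\begin{equation*}
\|g^\star\|_{L^p}^p \le 2 p \int_0^\infty \lambda^{p-2} \int_M |g(x)| \mathbf{1}_{\{|g(x)| > \lambda/2\}} \, d\mu(x) \, d\lambda = \frac{2^p \, p}{p-1} \int_M |g(x)|^p \, d\mu(x),
\end{equation*}
which is the desired bound with $C(p) = \bigl(2^p p/(p-1)\bigr)^{1/p}$.

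The argument is completely standard and I do not anticipate a real obstacle; the only subtle point to make explicit is the sublinearity of the maximal ergodic function, which the paper uses implicitly when invoking interpolation. No further regularity of $T$ or $\mu$ beyond the measure-preserving property is required, since both endpoint bounds (weak-$L^1$ from Theorem~\ref{thm:MaxErgThm} and $L^\infty \to L^\infty$ which is immediate from $\frac1n\sum_{i=0}^{n-1}|g|\circ T^i \le \|g\|_{L^\infty}$) hold in this generality.
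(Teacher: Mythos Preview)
Your proposal is correct and is precisely the standard Marcinkiewicz interpolation argument that the paper invokes in the line preceding the corollary (``Interpolating \eqref{eq:weakL1} with $\| g^{\star} \|_{L^{\infty}(M,\mu)} \leq \| g \|_{L^{\infty}(M,\mu)}$\ldots''). You have simply spelled out the details the paper leaves implicit, including the sublinearity of $g\mapsto g^\star$ needed for the interpolation, and your computation of the constant is accurate.
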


\subsection{Metric Entropy}
\label{subsec:entropy}
We summarize some preliminary material on the metric entropy of measure-preserving dynamical systems. For a more comprehensive treatment of the subject, we refer the reader to~\cite{PW82, MV16}.

Given a measurable space $(M, \mathcal{A})$, a partition is a countable collection of pairwise disjoint measurable sets whose union is $M$. For each $x \in M$, we denote by $\xi(x)$ the unique element of $\xi$ containing $x$.

\begin{definition}
 Let $(M, \mathcal{A}, \mu)$ be a probability space and $\xi \subseteq \mathcal{A}$ a partition. The entropy of $\xi$ is 
 \begin{equation}
  H_{\mu}(\xi) = - \sum_{A \in \xi} \mu(A) \log(\mu(A))
 \end{equation}
 with the convention that $0 \log 0 = 0$.
\end{definition}

For any family of partitions $\xi_1, \dots, \xi_n$, we define their join as
\begin{equation}
    \bigvee_{i=1}^n \xi_i = \left\{ \bigcap_{i=1}^n A_i : A_i \in \xi_i, \quad i = 1, \dots, n \right\}.
\end{equation}

\begin{definition}[Metric Entropy]\label{def:Ent}
 Let $(M, \mathcal{A}, \mu, T)$ be a measure-preserving system and $\xi \subseteq \mathcal{A}$ a partition with finite entropy (i.e., $H(\xi) < \infty$).
 The entropy of $T$ with respect to $\xi$ is
 \begin{equation}\label{eq:entropy1}
  h_\mu(T, \xi) = \lim_{n \to \infty} \dfrac{1}{n} H_{\mu} \left( \bigvee_{i = 0}^{n-1} T^{-i} \xi \right) = \inf_{n \geq 1} \dfrac{1}{n} H_{\mu} \left( \bigvee_{i = 0}^{n-1} T^{-i} \xi \right),
 \end{equation}
 where $T^{-i} \xi = \{ T^{-i}(A) : A \in \xi \}$ denotes the pull-back partition. 
 Then, the entropy of $T$ is defined as
 \begin{equation}
  h_\mu (T) := \sup_{\xi} h_{\mu}(T, \xi)
 \end{equation}
 where the supremum is taken over all finite partitions.
\end{definition}

The metric entropy behaves well under iterations and inversions (when $T$ is invertible).

\begin{proposition}[{\cite[Proposition 9.1.14]{MV16}}]\label{prop:EntIterationsInversions}
 Let $(M, \mathcal{A}, \mu, T)$ be a measure-preserving system. Then $h_{\mu}(T^j) = j h_{\mu}(T)$ for every $j\ge 0$.
 Moreover, if $T$ is invertible, then $h_{\mu}(T^{-1}) = h_{\mu}(T)$.
\end{proposition}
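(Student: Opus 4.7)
The plan is to reduce both assertions to a single combinatorial identity on joins of pull-back partitions, and then to exploit two standard facts that may be invoked freely from~\cite{MV16}: (i) the map $\xi\mapsto h_\mu(T,\xi)$ is monotone under refinement of $\xi$, and (ii) $H_\mu(S^{-1}\eta)=H_\mu(\eta)$ for every measure-preserving $S$. The degenerate case $j=0$ is trivial since $T^0=\mathrm{id}$ has zero entropy.

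For the iteration formula, fix a finite partition $\xi$ and set $\eta:=\bigvee_{k=0}^{j-1} T^{-k}\xi$, which has finite entropy by the subadditivity $H_\mu(\zeta_1\vee\zeta_2)\le H_\mu(\zeta_1)+H_\mu(\zeta_2)$. The backbone is the telescoping identity
\begin{equation*}
 \bigvee_{i=0}^{n-1}(T^j)^{-i}\eta \;=\; \bigvee_{i=0}^{nj-1} T^{-i}\xi,
\end{equation*}
which follows from $(T^j)^{-i}T^{-k}=T^{-(ij+k)}$ and the bijection $\{(i,k):0\le i<n,\ 0\le k<j\}\leftrightarrow\{0,\dots,nj-1\}$. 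Applying $H_\mu$, dividing by $n$, and sending $n\to\infty$ via Definition~\ref{def:Ent} yields $h_\mu(T^j,\eta)=j\,h_\mu(T,\xi)$. Taking the supremum over finite $\xi$ in the chain $h_\mu(T^j)\ge h_\mu(T^j,\eta)=j\,h_\mu(T,\xi)$ gives $h_\mu(T^j)\ge j\,h_\mu(T)$. Conversely, for any finite partition $\eta$, set $\eta':=\bigvee_{k=0}^{j-1}T^{-k}\eta$; by monotonicity and the same identity (applied with $\xi:=\eta$),
\begin{equation*}
 h_\mu(T^j,\eta)\;\le\;h_\mu(T^j,\eta')\;=\;j\,h_\mu(T,\eta)\;\le\;j\,h_\mu(T),
\end{equation*}
so taking $\sup_\eta$ on the left closes the first statement.

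For the inversion $h_\mu(T^{-1})=h_\mu(T)$, the key observation is the reindexing identity
\begin{equation*}
 \bigvee_{i=0}^{n-1}T^{i}\xi \;=\; T^{n-1}\!\left(\bigvee_{i=0}^{n-1}T^{-i}\xi\right),
\end{equation*}
valid for every finite $\xi$ and $n\ge 1$, obtained by replacing $i$ with $n-1-i$. Since $T^{n-1}$ is measure-preserving, the two sides have the same $H_\mu$-value; dividing by $n$ and letting $n\to\infty$ gives $h_\mu(T^{-1},\xi)=h_\mu(T,\xi)$, and $\sup_\xi$ concludes.

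The main obstacle: there is essentially none. The argument is pure bookkeeping with compositions of pull-backs. The only care required is to check that the refinements $\eta$ and $\eta'$ above have finite entropy (automatic from subadditivity of $H_\mu$ on finite joins), and to dispatch $j=0$ by hand.
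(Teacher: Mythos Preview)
Your proof is correct and is the standard textbook argument; the paper does not supply its own proof but simply cites \cite[Proposition~9.1.14]{MV16}, where essentially this same argument appears. There is nothing to add.
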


Given two partitions $\xi$ and $\eta$, we say that $\xi$ is a finer than $\eta$ if every element of $\xi$ is contained in some element of $\eta$. Then, $\xi$ is called a refinement of $\eta$.

\begin{theorem}[Kolmogorov-Sinai,{ \cite[Theorem 9.2.1]{MV16}}]\label{thm:KolmogorovSinai}
Let $(M, \mathcal{A}, \mu, T)$ be a measure-preserving system. 
Let $\xi_1, \xi_2, \ldots$ be a sequence of partitions with finite entropy such that $\xi_{i+1}$ is a refinement of $\xi_i$ for all $i \geq 1$ and so that
\begin{equation}
 \mathcal{A} = \sigma \left( \bigcup_{i \geq 1} \xi_i \right) \quad \text{up to zero measure sets.}
\end{equation}
Then,
\begin{equation}
 h_{\mu}(T) = \lim_{n \to \infty} h_{\mu} (T, \xi_n).
\end{equation}
\end{theorem}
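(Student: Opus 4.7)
The strategy is to establish $h_\mu(T)=\lim_n h_\mu(T,\xi_n)$ via matching inequalities, using monotonicity of $H_\mu$ under refinement together with the standard subadditivity bound involving conditional entropy. The main technical input will be the convergence to zero of the conditional entropy of a fixed finite partition given the generating $\xi_n$'s.

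\textbf{Monotonicity and the easy direction.} Since $\xi_{n+1}$ refines $\xi_n$, the iterated join $\bigvee_{i=0}^{N-1} T^{-i}\xi_{n+1}$ refines $\bigvee_{i=0}^{N-1} T^{-i}\xi_n$, and $H_\mu$ is monotone under refinement. Dividing by $N$ and letting $N\to\infty$ in \eqref{eq:entropy1} yields $h_\mu(T,\xi_{n+1})\geq h_\mu(T,\xi_n)$, so the sequence is non-decreasing and the limit exists. By approximating each $\xi_n$ by finite sub-partitions (possible since $H_\mu(\xi_n)<\infty$), one checks $h_\mu(T,\xi_n)\le h_\mu(T)$ for each $n$, so $\lim_n h_\mu(T,\xi_n)\leq h_\mu(T)$.

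\textbf{Key inequality.} For the reverse inequality, I fix an arbitrary finite partition $\eta$ and aim to show $h_\mu(T,\eta)\leq \lim_n h_\mu(T,\xi_n)$. Introducing the conditional entropy
\[
H_\mu(\eta\mid\xi)=-\sum_{A\in\eta,\, B\in\xi}\mu(A\cap B)\log\frac{\mu(A\cap B)}{\mu(B)},
\]
one has the standard identity $H_\mu(\alpha\vee\beta)=H_\mu(\alpha)+H_\mu(\beta\mid\alpha)$, which applied to iterated joins and combined with $T$-invariance of $\mu$ yields the subadditivity
\[
h_\mu(T,\eta\vee\xi_n)\leq h_\mu(T,\xi_n)+H_\mu(\eta\mid\xi_n).
\]
Since $\eta\vee\xi_n$ refines $\eta$, monotonicity then gives $h_\mu(T,\eta)\leq h_\mu(T,\xi_n)+H_\mu(\eta\mid\xi_n)$.

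\textbf{Vanishing conditional entropy, the main obstacle.} The heart of the argument is to show $H_\mu(\eta\mid\xi_n)\to 0$ as $n\to\infty$. This is where the hypothesis $\mathcal{A}=\sigma(\bigcup_i\xi_i)$ enters. Writing
\[
H_\mu(\eta\mid\xi_n)=\sum_{A\in\eta}\int_M \phi\bigl(\mathbb{E}[\mathbf{1}_A\mid\sigma(\xi_n)]\bigr)\,d\mu,
\qquad \phi(t)=-t\log t,
\]
the martingale convergence theorem gives $\mathbb{E}[\mathbf{1}_A\mid\sigma(\xi_n)]\to\mathbf{1}_A$ in $L^1$ (equivalently, $A$ is approximated in measure by unions of atoms of $\xi_n$). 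Since $\phi$ is continuous and bounded on $[0,1]$ with $\phi(0)=\phi(1)=0$, dominated convergence yields that each summand vanishes, and since $\eta$ is finite we conclude $H_\mu(\eta\mid\xi_n)\to 0$. Passing to the limit in $n$ in the previous inequality gives $h_\mu(T,\eta)\le\lim_n h_\mu(T,\xi_n)$, and taking the supremum over finite partitions $\eta$ in the definition of $h_\mu(T)$ closes the argument.
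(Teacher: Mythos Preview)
Your proof is correct and follows the standard textbook argument for the Kolmogorov--Sinai theorem. Note, however, that the paper does not actually prove this statement: Theorem~\ref{thm:KolmogorovSinai} is stated with a citation to \cite[Theorem 9.2.1]{MV16} and no proof is given in the manuscript itself. Your argument---monotonicity for the easy inequality, the bound $h_\mu(T,\eta)\le h_\mu(T,\xi_n)+H_\mu(\eta\mid\xi_n)$, and martingale convergence to kill the conditional entropy term---is precisely the standard proof one finds in the cited reference and in most ergodic theory texts, so there is nothing to compare against beyond observing that you have supplied what the paper chose to outsource.
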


\begin{definition}[Conditional Entropy]
 Let $(M, \mathcal{A}, \mu, T)$ be a measure-preserving system and $\xi, \eta \subseteq \mathcal{A}$ two partitions with finite entropy. The conditional entropy of $\xi$ given $\eta$ is defined as
 \begin{equation}
  H_{\mu} (\xi | \eta) = \sum_{B \in \eta} \mu(B) \left( - \sum_{A \in \xi} \dfrac{\mu(A \cap B)}{\mu(B)} \log \left( \dfrac{\mu(A \cap B)}{\mu(B)} \right) \right)
 \end{equation}
 with the convention that $\frac{0}{0} = 0$.
\end{definition}
We will use the following property of conditional entropy coming from \cite[Lemma 9.1.5]{MV16}.
Given three partitions $\xi$, $\eta$, $\gamma$, we have
\begin{equation}\label{eq:UsefulPropertyOfEntropy}
 H_{\mu}(\xi \vee \eta | \gamma) = H_{\mu}(\xi | \gamma) + H_{\mu}(\eta | \xi \vee \gamma).
\end{equation}

\begin{lemma}\label{lem:UsefulEntLemma}
 Let $(M, \mathcal{A}, \mu, T)$ be a measure-preserving system and $\xi$ a partition with finite entropy. 
 Then,
\begin{equation}\label{eq:UsefulEntLemmaIneqs}
  h_\mu(T, \xi) \leq \int_M \log\big( \# \{ A \in \xi : \mu(T^{-1}(A) \cap \xi(x)) > 0 \}\big)
 \, d \mu.
 \end{equation}
 
\end{lemma}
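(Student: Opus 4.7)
\smallskip

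\noindent\emph{Proof proposal for Lemma~\ref{lem:UsefulEntLemma}.}

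The plan is to reduce the entropy rate to a one-step conditional entropy and then bound that conditional entropy atom-by-atom using the elementary inequality that a Shannon entropy is at most the logarithm of the size of its support.

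First I would use the chain rule \eqref{eq:UsefulPropertyOfEntropy} (with $\gamma$ trivial, iterated) to decompose, for every $n\ge 1$,
\begin{equation*}
H_{\mu}\left(\bigvee_{i=0}^{n-1} T^{-i}\xi\right)
= H_{\mu}(\xi) + \sum_{k=1}^{n-1} H_{\mu}\left(T^{-k}\xi \,\Big|\, \bigvee_{j=0}^{k-1} T^{-j}\xi\right).
\end{equation*}
Since $\bigvee_{j=0}^{k-1} T^{-j}\xi$ refines $T^{-(k-1)}\xi$ and conditioning on a finer partition does not increase conditional entropy, each summand is bounded by $H_{\mu}(T^{-k}\xi \mid T^{-(k-1)}\xi)$. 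The $T$-invariance of $\mu$ then gives $H_{\mu}(T^{-k}\xi \mid T^{-(k-1)}\xi) = H_{\mu}(T^{-1}\xi \mid \xi)$, so dividing by $n$ and letting $n\to\infty$ in \eqref{eq:entropy1} yields
\begin{equation*}
h_{\mu}(T,\xi) \le H_{\mu}(T^{-1}\xi \mid \xi).
\end{equation*}

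Next I would bound the right-hand side. By definition,
\begin{equation*}
H_{\mu}(T^{-1}\xi \mid \xi)
= \sum_{B \in \xi} \mu(B) \left( -\sum_{A \in \xi} \frac{\mu(T^{-1}(A) \cap B)}{\mu(B)} \log \frac{\mu(T^{-1}(A) \cap B)}{\mu(B)} \right).
\end{equation*}
For each fixed $B \in \xi$, the numbers $p_A := \mu(T^{-1}(A) \cap B)/\mu(B)$, $A \in \xi$, form a probability distribution whose support has exactly $N(B) := \#\{A \in \xi : \mu(T^{-1}(A) \cap B) > 0\}$ elements. By Jensen's inequality (or the standard fact that Shannon entropy is maximized by the uniform distribution on its support), $-\sum_A p_A \log p_A \le \log N(B)$. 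Summing over $B$ weighted by $\mu(B)$ and rewriting the sum as an integral (using $N(\xi(x)) = N(B)$ for $x \in B$), we obtain
\begin{equation*}
H_{\mu}(T^{-1}\xi \mid \xi) \le \sum_{B \in \xi} \mu(B) \log N(B) = \int_M \log\big(\#\{A \in \xi : \mu(T^{-1}(A) \cap \xi(x)) > 0\}\big) \, d\mu(x),
\end{equation*}
which combined with the previous inequality gives \eqref{eq:UsefulEntLemmaIneqs}.

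There is no serious obstacle: the argument is a standard rearrangement of the Kolmogorov--Sinai machinery. The only mild care needed is in the monotonicity-in-conditioning step (which is where $T$-invariance enters to collapse all one-step conditional entropies to $H_{\mu}(T^{-1}\xi\mid \xi)$), and in recognizing that after conditioning on a single atom $B \in \xi$ the support of the conditional distribution of $T^{-1}\xi$ has cardinality $N(B)$, so that the entropy-vs-log-cardinality bound applies.
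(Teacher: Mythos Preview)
Your proof is correct and follows essentially the same route as the paper: reduce $h_\mu(T,\xi)$ to $H_\mu(T^{-1}\xi\mid\xi)$ via the chain rule, monotonicity of conditional entropy under coarsening, and $T$-invariance, then bound each conditional Shannon entropy by the log of the size of its support. The only cosmetic difference is that the paper writes the first step as a one-step recursion and induction rather than a full telescoping sum, and phrases the final bound as ``concavity of $-x\log x$'' rather than Jensen's inequality.
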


\begin{proof}
First observe that for any $n \geq 1$
\begin{equation}
    H_{\mu} \left( \bigvee_{i = 0}^{n-1} T^{-i} \xi \right) {\leq} H_{\mu} \left( \bigvee_{i = 0}^{n-2} T^{-i} \xi \right) + H_{\mu} \left( T^{-(n-1)} \xi \big| T^{-(n-2)} \xi \right).
\end{equation}
where we used \eqref{eq:UsefulPropertyOfEntropy} with $\gamma$ the trivial partition and the fact that conditional entropy increases when conditioning with a less fine partition.
Noting that $H_{\mu} ( T^{-(n-1)} \xi | T^{-(n-2)} \xi ) = H_{\mu}(T^{-1} \xi | \xi)$ and arguing by induction, we deduce $h_{\mu}(T, \xi) \leq H_{\mu}(T^{-1} \xi | \xi)$. 

Therefore,
\begin{align*}
 H_\mu(T^{-1} \xi | \xi) &= \sum_{B \in \xi} \mu(B) \left( - \sum_{A \in \xi} \dfrac{\mu(T^{-1} (A) \cap B)}{\mu(B)} \log \left( \dfrac{\mu(T^{-1} (A) \cap B)}{\mu(B)} \right) \right)\\
 &\leq  \sum_{B \in \xi} \mu(B) \log (\# \{ A \in \xi: \mu(T^{-1} (A) \cap B) > 0 \} ) = \int_M \log \nu(x) \, d \mu(x).
\end{align*}
where in the inequality we used the concavity of the function $-x \log (x)$.
\end{proof}

\subsection{Oseledets Multiplicative Ergodic Theorem}\label{subsec:OseledetsMET}
Throughout this section,  let $(M, \mathcal{A}, \mu, T)$ be a measure-preserving system over a separable probability space.
 We state the multiplicative ergodic theorem, first proven by Oseledets \cite{VIO68}, see also \cite{DR79, LA02, MV14, LBYP23}.
Consider a mapping $A \colon M \to \R^{d \times d}$. The transformation form $M \times \R^d$ to itself defined by 
\[
 (x, v) \mapsto (T(x), A(x) v )
\]
is called the linear cocycle defined by $A$ over $T$. Iterating this transformation $n$ times, the second component gives
\[
 A^n(x) = A(T^{n-1}(x)) \ldots A(T(x)) A(x).
\]
Additionally, if $T$ is invertible and $A(x) \in \GL_d(\R)$ for $\mu$-a.e. $x$ then the inverse of the transformation $(x, v) \mapsto (T(x), A(x))$ is
\[
 (x, v) \mapsto (T^{-1}(x), A(x)^{-1} v ).
\]

\bigskip

\begin{theorem}[{\cite[Theorem 1.6 and Remark 1.8]{DR79}}]\label{thm:METOne}

 Let $A \colon M \to \R^{d \times d}$ be a measurable function. Assume that
 \begin{equation}\label{eq:LogIntegrabiliytOfCocycle}
  \int_M \log^+ |A(x)| \, d \mu(x) < \infty.
 \end{equation}
 Then, there exists a full measure set $\Gamma \subseteq M$ and a function $k \colon \Gamma \to \N_{\geq 1}$ such that for all $x \in \Gamma$ there exists:
 \begin{equation}
\infty > \lambda_1(x) > \ldots > \lambda_{k(x)}(x) \geq - \infty \text{ and a flag } \R^d = V^1_x \supsetneq \ldots \supsetneq V^{k(x)}_x \supsetneq \{ 0 \}
 \end{equation}
 for which the following holds:
 \begin{enumerate}
  \item the maps $x \mapsto k(x)$, $x \mapsto \lambda_i(x)$ and $x \mapsto V_x^{i}$ are defined on $\Gamma$ and are measurable;
  \item $k \circ T = k$, $\lambda_i \circ T = \lambda_i$ and $A(x) V_x^{i} \subseteq V_{T(x)}^{i}$;
  \item $\lim_{n \to \infty} \frac{1}{n} \log |A^n(x) v| = \lambda_i(x)$, $\forall v \in V_x^{i} \setminus V_x^{i+1}$ (with the convention that $V_x^{k+1} = \{ 0 \}$).
 \end{enumerate}
\end{theorem}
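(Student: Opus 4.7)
The strategy I would take follows the classical route via Kingman's subadditive ergodic theorem applied to the exterior powers of the cocycle $A^n$. For each $k \in \{1,\dots,d\}$, the sequence
\begin{equation*}
f_n^{(k)}(x) := \log \|\wedge^k A^n(x)\|
\end{equation*}
is subadditive in the sense $f_{n+m}^{(k)}(x) \le f_n^{(k)}(x) + f_m^{(k)}(T^n x)$, since $\wedge^k$ is multiplicative in the matrix argument and the operator norm is submultiplicative. The hypothesis \eqref{eq:LogIntegrabiliytOfCocycle} together with $\|\wedge^k A\| \le \|A\|^k$ gives $(f_1^{(k)})^+ \in L^1(\mu)$. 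Kingman's theorem then yields $T$-invariant measurable limits $\Lambda_k(x) = \lim_{n\to\infty} \tfrac{1}{n}\log\|\wedge^k A^n(x)\| \in [-\infty,+\infty)$ on a full-measure $T$-invariant set $\Gamma$.

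\textit{Tentative exponents, flag and invariance.} Via the polar decomposition write $A^n(x) = U_n(x) P_n(x)$ with $P_n(x) := (A^n(x)^\ast A^n(x))^{1/2}$, and denote its ordered singular values $\sigma^{(n)}_1(x) \ge \dots \ge \sigma^{(n)}_d(x) \ge 0$. Since $\|\wedge^k A^n(x)\| = \sigma^{(n)}_1(x)\cdots\sigma^{(n)}_k(x)$, the previous step gives
\begin{equation*}
\tilde{\lambda}_k(x) \ := \ \lim_{n\to\infty} \tfrac{1}{n}\log \sigma^{(n)}_k(x) \ = \ \Lambda_k(x) - \Lambda_{k-1}(x),
\end{equation*}
which is $T$-invariant. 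Grouping equal values produces the strictly ordered list $\lambda_1(x) > \dots > \lambda_{k(x)}(x)$ with multiplicities $m_i(x) \in \N_{\ge 1}$. The flag is then defined by
\begin{equation*}
V^i_x \ := \ \bigl\{ v \in \R^d \, : \ \limsup_{n\to\infty} \tfrac{1}{n} \log |A^n(x) v| \ \le \ \lambda_i(x) \bigr\},
\end{equation*}
which is a linear subspace, satisfies $\R^d = V^1_x \supseteq V^2_x \supseteq \dots \supseteq V^{k(x)}_x \supsetneq \{0\}$, and is $T$-covariant in the sense $A(x) V^i_x \subseteq V^i_{T(x)}$ as an immediate consequence of the cocycle identity $A^{n+1}(x) = A(T^n x)A^n(x)$. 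The dimensions $\dim V^i_x = m_i(x) + \dots + m_{k(x)}(x)$ come from identifying $V^i_x$ with the span of the singular vectors of $P_n(x)$ associated with singular values of order $e^{n\lambda_j}$ for $j \ge i$, after passing to the limit.

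\textit{Main obstacle.} The delicate step is to upgrade the $\limsup$ defining $V^i_x$ to a genuine limit equal to $\lambda_i(x)$ for every $v \in V^i_x \setminus V^{i+1}_x$, and to construct the subspaces $V^i_x$ as measurable maps into the Grassmannian. The difficulty is that the singular vectors of $P_n(x)$ depend on $n$ and need not converge; one must show that on an Oseledets-regular orbit the angles between the singular directions associated to distinct Lyapunov levels are \emph{tempered}, that is, decay at most subexponentially. This is the Oseledets--Raghunathan regularity argument and can be carried out using a Borel--Cantelli argument applied to the subadditive convergence rates of $\Lambda_k$, together with a Grassmannian comparison between the flags produced by $A^n(x)$ and by $A^{n+m}(x)$. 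Measurability of $k(x)$, $\lambda_i(x)$, and $V^i_x$ then follows because each construction uses only measurable operations (eigen-decompositions, $\liminf$'s, kernels of explicit linear maps), and $T$-invariance of $\lambda_i$ and $k$ follows from the cocycle relation.
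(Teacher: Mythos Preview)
The paper does not provide its own proof of this statement: Theorem~\ref{thm:METOne} is quoted as a classical result, with citation to Ruelle~\cite{DR79}, and no argument is given beyond a short paragraph explaining that the exponents and flag arise from the limit $\Lambda(x) = \lim_{n\to\infty} (A^n(x)^T A^n(x))^{1/(2n)}$ and its eigendecomposition. There is therefore nothing to compare against.

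Your sketch follows the Raghunathan route via Kingman's subadditive theorem applied to the exterior powers $f_n^{(k)} = \log\|\wedge^k A^n\|$, which is one of the standard proofs and is perfectly compatible with the paper's remarks (the convergence in \eqref{eq:ConvergenceSingularValuesToLyapunov} is essentially the same information repackaged). The outline is sound; the only place where more care would be needed in a full write-up is, as you already flag, the upgrade from $\limsup$ to $\lim$ in the definition of $V^i_x$ and the verification that $\dim V^i_x$ equals the predicted multiplicity --- this requires a genuine comparison of the eigenspaces of $P_n(x)$ for different $n$ (e.g.\ via the perturbation estimates in Ruelle's or Raghunathan's original papers), not just a Borel--Cantelli argument on the scalar quantities $\Lambda_k$.
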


 The numbers $\lambda_1(x) > \ldots > \lambda_{k(x)}(x)$ and the flag $\R^d = V^1_x \supsetneq \ldots \supsetneq V^{k(x)}_x \supsetneq \{ 0 \}$ in the theorem above can be obtained as follows. For $\mu$-a.e. $x \in M$, the limit
 \begin{equation}\label{eq:ConvergenceSingularValuesToLyapunov}
  \Lambda(x) = \lim_{n \to \infty} \left( A^n(x)^T A^n(x) \right)^{\frac{1}{2n}}
 \end{equation}
 exists.
 Moreover, this matrix is symmetric positive semi-definite.
 Call $\exp \lambda_1(x) > \ldots > \exp \lambda_{k(x)}(x)$ the eigenvalues of $\Lambda(x)$ and $U^1_x, \ldots, U^{k(x)}_x$ the corresponding eigenspaces. Then, the flag is obtained as $V_x^{i} = U_x^{i} + \ldots + U_x^{k(x)}$. 
 {Call $\chi_i(U)$ the $i$-th singular value of $U \in \R^{d \times d}$, i.e. the $i$-th eigenvalue of $\sqrt{U^T U}$, counted with multiplicities. Then, since $\exp \lambda_i(x)$ is the limit of the $i$-th eigenvalue (up to multiplicities) of $( A^n(x)^T A^n(x) )^{1/2n}$, it follows from \eqref{eq:ConvergenceSingularValuesToLyapunov} that
 \begin{equation}\label{eq:SumOfPositiveLyapunovExpsAsLimit}
     \sum_{i = 1}^{k(x)} \lambda_i^{+}(x) m_i(x) 
     = \lim_{n \to \infty} \sum_{i = 1}^{d} \log^+ \left( [\chi_i(A^n(x))]^{1/n} \right) = \lim_{n \to \infty} \frac{1}{n} \sum_{i = 1}^{d} \log^+ \left( \chi_i(A^n(x)) \right).
 \end{equation}
}

\begin{remark}[{\cite[Remark 1.8]{DR79}}]
 We observe that if addition to \eqref{eq:LogIntegrabiliytOfCocycle}, we also have $A(x)$ $\mu$-a.e. invertible and
 \begin{equation}\label{ass:invert}
  \int_M \log^+ |A(x)^{-1}| \, d \mu(x) < \infty,
 \end{equation}
 then we may take $\Gamma$ in Theorem~\ref{thm:METOne} such that $\lambda_i(x) > - \infty$ and $A(x) V_x^{i} = V_{T(x)}^{i}$ for all $x \in \Gamma$.
\end{remark}

\begin{remark}[{\cite[Theorem 3.1]{DR79} and \cite[Theorem 4.2]{MV14}}]\label{rmk:StrongerMET}
    In the setting of Theorem~\ref{thm:METOne}, if additionally $T$ is invertible and $A(x) \in \GL_d(\R)$ for $\mu$-a.e. $x \in M$ with \eqref{ass:invert}
    then Theorem~\ref{thm:METOne} can be refined 
    to give the  existence of a measurable direct sum decomposition $\R^d = E_x^1 \oplus \ldots E_x^{k(x)}$ with $A(x) E_x^i = E^i_{T(x)}$ for $\mu$-a.e. $x \in M$ and all $i = 1, \ldots, k(x)$ such that
    \[
     \lim_{n \to \infty} \dfrac{1}{n} \log |A^n(x) v| =\lambda_i(x) \quad \forall v \in E_x^i \setminus \{ 0 \}.
    \]
    Moreover, angles between subspaces $E_x^i$ decay sub-exponentially along $\mu$-a.e. orbit.
\end{remark}

\section{Weak Differentiability and Lyapunov Exponents}\label{sec:NotionsDifferentiabilityAndLyapunov}

We introduce two well-known notions of differentiability: {\it approximate differentiability} and {\it differentiability in measure}. We then explain how these weak notions of differentiability can be used to define cocycles in our singular framework. Almost everywhere approximate differentiability has been established in \cite{GCCDL08} for regular Lagrangian flows (RLFs) associated with Sobolev velocity fields in $W^{1,p}$ for $p > 1$. Differentiability in measure was proven in \cite{CLBPLL04} for $W^{1,1}$ velocity fields, and extended to the $\BV$ setting in \cite{SBNDN22}. 

Thanks to the multiplicative ergodic theorem, these weak differentiability notions yield the existence of Lyapunov exponents. In particular, we establish Theorem~\ref{thm:LyapunovSobolev}.

\subsection{Approximate Limits}\label{subsec:ApproxLimits}

The upper and lower densities of a measurable set $E \subset \mathbb{T}^d$ at a point $x \in \mathbb{T}^d$ are defined as
\begin{equation}
    \limsup_{r \to 0^+} \frac{\mathcal{L}^d(B_r(x) \cap E)}{\mathcal{L}^d(B_r(x))} \quad \text{and} \quad \liminf_{r \to 0^+} \frac{\mathcal{L}^d(B_r(x) \cap E)}{\mathcal{L}^d(B_r(x))},
\end{equation}
respectively. When the upper and lower densities coincide, their common limit is called the density of $E$ at $x$. It is a classical result (see, for instance,~\cite[Theorem 1.35]{LCERFG15}) that the density exists and equals $1$ for $\mathcal{L}^d$-almost every $x \in E$. We refer to such points as the {\it full density points of $E$}.

\begin{definition}[Approximate Limit]
 Let $(N,\metric)$ be a metric space. We say that $F \colon \T^d \to N$ has an approximate limit at $x_0 \in \T^d$ if there exists a measurable set $E \subseteq \T^d$ whose density at $x_0$ is 1 such that
 \begin{equation}
  \lim_{x \to x_0, \, x \in E} F(x) \text{ exists.}
 \end{equation}
 Then, we denote it as $\ap \lim_{x \to x_0} F(x)$ and call it the approximate limit of $F$ at $x_0$.
\end{definition}

\subsection{Approximate Differentiability}
\label{subsec:ApproxDiff}

We introduce the concept of approximate differentiability.

\begin{definition}
Let $F: \mathbb{T}^d \to \mathbb{T}^d$ be a map and $x_0 \in \mathbb{T}^d$. The map $F$ is approximately differentiable at $x_0$ if there exists a matrix $A \in \mathbb{R}^{d \times d}$ such that
\begin{equation}
    \ap \lim_{x \to x_0} \frac{|F(x) - F(x_0) - A(x - x_0)|}{|x - x_0|} = 0.
\end{equation}
In this case, we write $\apD F(x_0) = A$ and call $\apD F(x_0)$ the approximate differential or approximate gradient of $F$ at $x_0$.
\end{definition}

When it exists, the approximate differential is unique. Additionally, any function $F \in C^1(\mathbb{T}^d; \mathbb{T}^d)$ is everywhere approximately differentiable, with its approximate differential coinciding with the classical differential, i.e., $\apD F(x) = \nabla F(x)$ for all $x \in \mathbb{T}^d$.

\begin{proposition}\label{prop:ApproxDiffPlusBoundPlusSmoothCovering}
Let $(\T^d, \Borel, \mu, T)$ be a measure-preserving system satisfying Assumption~\ref{assump:LusinLip} with $\mu \ll \Leb^d$. Then $T$ is $\mu$-a.e. approximately differentiable. In particular, the approximate differential satisfies 
\begin{equation}\label{eq:BoundOnApproximateGradient}
    |\apD T(x)| \leq \e^{3 g(x)} \quad \mu\text{-a.e.}
\end{equation} 
Moreover, there exists a collection of measurable sets $\{ A_i \}_{i \geq 1}$ and mappings $\Phi_i \in C^1(\overline{B}_i; M)$ such that
\begin{enumerate}
    \item $B_i$ are open balls containing $A_i$, and $\Phi_i = T$ on $A_i$,
    \item $\mu \left( \T^d \setminus \bigcup_{i \geq 1} A_i \right) = 0$.
\end{enumerate}
\end{proposition}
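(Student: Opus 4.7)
The plan is to produce the $C^1$-covering first, and to read off a.e.\ approximate differentiability and the gradient bound from it. For $k \in \mathbb{N}$, set $E_k := \{g \le k\}$. Chebyshev's inequality together with $g \in L^p(\mathbb{T}^d, \mu)$ gives $\mu(\mathbb{T}^d \setminus E_k) \to 0$, and Assumption~\ref{assump:LusinLip} directly yields that $T|_{E_k}$ is $e^{2k}$-Lipschitz. Working in coordinate charts on $\mathbb{T}^d$, I would first extend $T|_{E_k}$ to an $e^{2k}$-Lipschitz map on the whole torus via a coordinatewise McShane extension, and then invoke the classical Lusin-type $C^1$-approximation theorem for Lipschitz maps (see, e.g., Federer, Section~3.1.16, or Evans--Gariepy, Chapter~6) to produce, for each $j \ge 1$, a closed set $A_{k,j} \subseteq E_k$ with $\mu(E_k \setminus A_{k,j}) < 2^{-j}$, an open ball $B_{k,j} \supseteq A_{k,j}$, and a map $\Phi_{k,j} \in C^1(\overline{B_{k,j}}; \mathbb{T}^d)$ with $\Phi_{k,j} = T$ on $A_{k,j}$. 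Relabelling $\{(A_{k,j}, B_{k,j}, \Phi_{k,j})\}_{k,j}$ as $\{(A_i, B_i, \Phi_i)\}_{i \ge 1}$ gives a countable family whose sets $A_i$ exhaust $\bigcup_k E_k$ up to $\mu$-null sets, and hence exhaust $\mathbb{T}^d$ up to a $\mu$-null set; this establishes~(ii).

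At every $x_0$ that is a density point of some $A_i$, the coincidence $\Phi_i|_{A_i} = T|_{A_i}$ together with the classical differentiability of $\Phi_i$ at $x_0$ immediately yields
\[
\ap\lim_{x \to x_0} \frac{|T(x) - T(x_0) - \nabla \Phi_i(x_0)(x - x_0)|}{|x - x_0|} = 0,
\]
so $T$ is approximately differentiable at $x_0$ with $\apD T(x_0) = \nabla \Phi_i(x_0)$. Since $\mu$-a.e.\ point is a density point of some $A_i$, this gives a.e.\ approximate differentiability. For the pointwise gradient bound, fix a $\mu$-a.e.\ $x_0$ which is simultaneously a density point of some $A_i$ and a Lebesgue point of $g$; then for every $\eps > 0$, the set $F_\eps := A_i \cap \{g \le g(x_0) + \eps\}$ has density $1$ at $x_0$. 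By Assumption~\ref{assump:LusinLip},
\[
\metric(\Phi_i(y), \Phi_i(x_0)) = \metric(T(y), T(x_0)) \le e^{2 g(x_0) + \eps}\, \metric(y, x_0) \quad \text{for every } y \in F_\eps.
\]
Combined with the $C^1$-regularity of $\Phi_i$ and a Fubini-type argument on the unit sphere (which ensures that for a.e.\ unit vector $v$ there exists a sequence $t_n \downarrow 0$ with $x_0 + t_n v \in F_\eps$), this transfers to the operator norm: $|\nabla \Phi_i(x_0)| \le e^{2g(x_0) + \eps}$. Sending $\eps \to 0$ and using $e^{2g(x_0)} \le e^{3g(x_0)}$ gives the bound in~\eqref{eq:BoundOnApproximateGradient}.

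The $C^1$-approximation of Lipschitz maps invoked in the first paragraph is classical, so the main subtle point is the last step: translating the Lipschitz-type bound that only holds on the density-$1$ subset $F_\eps$ into a bound on the full operator norm of $\nabla \Phi_i(x_0)$. This uses the $C^1$-regularity of $\Phi_i$ together with a directional-density argument on the unit sphere (alternatively, one can apply Rademacher's theorem to a Lipschitz extension of $T|_{F_\eps}$ and identify its classical derivative at density points of $F_\eps$ with $\apD T(x_0)$), and is the only place where care beyond classical considerations is needed.
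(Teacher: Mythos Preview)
Your argument is correct and follows essentially the same route as the paper: restrict $T$ to sublevel sets of $g$ to get Lipschitz maps, apply the $C^1$ Lusin approximation in charts, read off a.e.\ approximate differentiability at density points, and then bound $|\apD T|$ via the Lusin--Lipschitz inequality on a density-$1$ set. Two small remarks.

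First, for the gradient bound the paper works with dyadic shells $E_k=\{2^k<g\le 2^{k+1}\}$; at a density point $x\in E_k$ one restricts to $y\in E_k$, getting $e^{g(x)+2^{k+1}}\le e^{3g(x)}$ in one line, with no directional argument needed. Your use of ``Lebesgue points of $g$'' is the right idea but slightly misnamed: since $g\in L^p(\mu)$ need not lie in $L^1_{\loc}(\Leb^d)$, what you actually want is the $\Leb^d$-a.e.\ \emph{approximate continuity} of the measurable function $g$ (which requires no integrability), combined with $\mu\ll\Leb^d$. With that correction your argument even yields the sharper bound $|\apD T(x)|\le e^{2g(x)}$ before relaxing to $e^{3g(x)}$.

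Second, your ``Fubini on the sphere'' step is valid (a density-$1$ set at $x_0$ meets $\{x_0+t v:0<t<r\}$ for a.e.\ direction $v$ and all small $r$, since otherwise a truncated cone of positive aperture would lie in the complement), but it is more machinery than necessary: once $\Phi_i$ is $C^1$ you can write $|\apD T(x_0)|=\ap\limsup_{y\to x_0}\frac{|\apD T(x_0)(y-x_0)|}{|y-x_0|}\le \ap\limsup_{y\to x_0}\frac{\metric(T(y),T(x_0))}{\metric(y,x_0)}$ and then restrict to any density-$1$ set, which is how the paper proceeds.
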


\begin{proof}
    Let $g \in L^p(\T^d, \mu)$ as in Assumption~\ref{assump:LusinLip}. Define the sets $C_i := \{ x \in \T^d : 2^i \leq g(x) \le  2^{i+1} \}$. Clearly, the restriction $T|_{C_i}$ is $e^{2^{i+2}}$-Lipschitz. Cover $C_i$ with a finite collection of balls $B_1, \dots, B_m$, each with radius $r \leq e^{-2^{i+10}} \cdot \text{inj}$, where $\text{inj} > 0$ denotes the injectivity radius of $\T^d$. For each set $A_{i,\ell} := C_i \cap \overline{B}_\ell$, both $A_{i,\ell}$ and its image $T(A_{i,\ell})$ are contained in open balls diffeomorphic to $\mathbb{R}^d$. By composing with an appropriate chart, we reduce the problem to studying a Lipschitz map in $\mathbb{R}^d$, which, by the standard Lusin approximation theorem, can be approximated by a $C^1$ function on a set of arbitrarily large measure. The details are left to the reader. From this it is clear that (i) and (ii) holds and in particular $T$ is $\mu$-a.e. approximately differentiable. It only remains to prove \eqref{eq:BoundOnApproximateGradient}. 
    Without loss of generality, up to adding an arbitrarily small constant to $g$, we assume $g>0$.
By definition of approximate differentiability, for a.e. $x$ it holds that
\begin{equation}\label{eq:ApproximateDifferentialAndIncrementalQuotient}
 |\apD T(x)|=\sup_{|v| = 1 } |\apD T(x) v| =\ap \limsup_{y \to x} \frac{|\apD T (x) (y-x)|}{|y-x|} \leq \ap \limsup_{y \to x} \frac{|T(y) - T(x)|}{|y-x|}
\end{equation}
For each $k \in \Z$, let $E_k = \{ 2^k < g \leq 2^{k+1} \}$ and note that $ \cup_{k \in \Z} E_k$ is a set of $\mu$-full measure. Let $k \in \Z$ be arbitrary and $x \in E_k$ a point of density 1. Then by \eqref{eq:ApproximateDifferentialAndIncrementalQuotient}
 \begin{equation}
   |\apD T(x)| \leq \ap \limsup_{y \to x} \e^{g(x) + 2^{k+1}} \leq \e^{g(x) + 2 g(x)} = \e^{3 g(x)}.
 \end{equation}
 Since $k \in \Z$ was arbitrary, we deduce that $|\apD T (x)| \leq \e^{3g(x)}$.
\end{proof}

\begin{proposition}\label{prop:ChainRuleForMPS}
 Let $(\T^d, \Borel, \mu, T)$ and $(\T^d, \Borel, \mu, S)$ be two measure-preserving systems with identical stationary measure $\mu \ll \Leb^d$. Assume that both measure-preserving systems satisfy Assumption~\ref{assump:LusinLip}. Then $T \circ S$ is $\mu$-a.e. approximately differentiable and it holds that
  \begin{equation}\label{eq:ChainRuleForMPS}
  \apD(T \circ S)(x) = (\apD T \circ S)(x) \apD S(x) \quad \text{for $\mu$-a.e. $x \in \T^d$.}
 \end{equation}
\end{proposition}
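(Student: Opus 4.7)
The plan is to reduce the chain rule to its classical $C^1$ version via the Lusin-type $C^1$ approximations from Proposition~\ref{prop:ApproxDiffPlusBoundPlusSmoothCovering} applied to $T$ and $S$.

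First, I would observe that $T \circ S$ itself satisfies Assumption~\ref{assump:LusinLip} (by the subadditivity Remark~2.4), so Proposition~\ref{prop:ApproxDiffPlusBoundPlusSmoothCovering} already gives that $\apD(T \circ S)$ exists $\mu$-a.e.; the task reduces to identifying this differential as $\beta\alpha$, where $\alpha := \apD S(x)$ and $\beta := \apD T(S(x))$. I would then apply Proposition~\ref{prop:ApproxDiffPlusBoundPlusSmoothCovering} separately to $T$ and $S$ to obtain Borel decompositions $\bigcup_i A_i^T$ and $\bigcup_j A_j^S$ of $\mu$-full measure, together with $C^1$ extensions $\Phi_i^T$, $\Phi_j^S$ that coincide with $T$, $S$ on these sets. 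For $\mu$-a.e.\ $x$, the Lebesgue differentiation theorem combined with $\mu \ll \Leb^d$ guarantees that $x$ is a Lebesgue density point of some $A_j^S$, whence $\alpha = \nabla \Phi_j^S(x)$. Using that $S_\#\mu = \mu$, the same property for $S(x)$ and some $A_i^T$ holds for $\mu$-a.e.\ $x$, giving $\beta = \nabla \Phi_i^T(S(x))$.

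Next, I would consider the smooth composition $\Psi := \Phi_i^T \circ \Phi_j^S$, which is $C^1$ near $x$ with classical derivative $\nabla \Psi(x) = \beta\alpha$. Since $T \circ S$ and $\Psi$ agree on $E := A_j^S \cap S^{-1}(A_i^T)$, if $E$ has Lebesgue density $1$ at $x$, then the classical $C^1$ chain rule for $\Psi$ immediately yields $\apD(T \circ S)(x) = \beta\alpha$. When $\nabla \Phi_j^S(x)$ is invertible this is straightforward: by the inverse function theorem, $\Phi_j^S$ is a local $C^1$-diffeomorphism near $x$, and it pulls back the Lebesgue density $1$ of $A_i^T$ at $S(x)$ to the Lebesgue density $1$ of $E$ at $x$.

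The hard part will be the degenerate case, where $\nabla \Phi_j^S(x)$ is rank-deficient and the density-$1$ property of $A_i^T$ at $S(x)$ may fail to pull back along the compressed image of $\Phi_j^S$. To handle it, I would bypass the pure density computation by estimating the chain-rule error directly. Split $(T \circ S)(y) - (T \circ S)(x) - \beta\alpha(y-x) = R_1(y) + R_2(y)$ with $R_2(y) := \beta(S(y) - S(x) - \alpha(y-x)) = o(|y-x|)$ on a density-$1$ set (by approximate differentiability of $S$), and $R_1(y) := T(S(y)) - T(S(x)) - \beta(S(y) - S(x))$. On $E$, $R_1$ is the $C^1$-Taylor error of $\Phi_i^T$ at $S(x)$ evaluated at $S(y)$, hence $R_1 = o(|S(y) - S(x)|) = o(|y-x|)$ by Assumption~\ref{assump:LusinLip} for $S$. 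On $A_j^S \setminus E$, Assumption~\ref{assump:LusinLip} for $T$ yields $|R_1(y)| \leq (e^{g_T(S(y)) + g_T(S(x))} + |\beta|)\,|S(y) - S(x)|$; using that $g_T \circ S$ has Lebesgue points $\mu$-a.e.\ to bound $g_T(S(y))$ on a density-$1$ subset, combined with the area formula applied to the $C^1$ map $\Phi_j^S$ and the density-$0$ property of $(A_i^T)^c$ at $S(x)$, one controls the Lebesgue measure of the bad set $(A_j^S \setminus E) \cap B_r(x)$ to be $o(r^d)$, closing the argument.
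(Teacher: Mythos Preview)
Your overall strategy coincides with the paper's: reduce to the classical $C^1$ chain rule via the Lusin-type approximations of Proposition~\ref{prop:ApproxDiffPlusBoundPlusSmoothCovering}. The first paragraph of your plan is exactly the paper's argument. The divergence comes in how you establish that $E = A_j^S \cap S^{-1}(A_i^T)$ has Lebesgue density $1$ at $x$.

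You attempt to \emph{pull back} the density-$1$ property of $A_i^T$ at $S(x)$ through $\Phi_j^S$, splitting into the cases where $\nabla\Phi_j^S(x)$ is invertible or degenerate. The degenerate branch has a genuine gap: the area formula does \emph{not} control the Lebesgue measure of the preimage of a density-$0$ set when the Jacobian vanishes. A concrete obstruction in $d=2$: take $\Phi(x_1,x_2) = (x_1,0)$ and $(A_i^T)^c = \{(y_1,0) : 0 < y_1 < 1\}$, a $\Leb^2$-null set (hence density $0$ at the origin) whose preimage under $\Phi$ is a half-strip with density $1/2$ at the origin. So your claimed $o(r^d)$ bound on $\Leb^d((A_j^S\setminus E)\cap B_r(x))$ fails in general, and the direct $R_1$-estimate does not close since on the bad set it only yields $|R_1(y)| \le C|y-x|$, not $o(|y-x|)$.

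The fix is much simpler and is exactly what the paper does: do not pull anything back. For each pair $(i,j)$, the set $E_{i,j} := A_j^S \cap S^{-1}(A_i^T)$ is measurable, and by Lebesgue's density theorem together with $\mu\ll\Leb^d$, $\mu$-a.e.\ point of $E_{i,j}$ is already a Lebesgue density-$1$ point of $E_{i,j}$. Since $S_\#\mu=\mu$ gives $\mu\bigl(\T^d\setminus\bigcup_{i,j} E_{i,j}\bigr)=0$, one simply \emph{selects} $x$ to be a density-$1$ point of some $E_{i,j}$ (and, for the identification of $\beta$, such that $S(x)$ is a density-$1$ point of $A_i^T$, which again holds $\mu$-a.e.\ by $S_\#\mu=\mu$). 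At such $x$ one has directly $\apD(T\circ S)(x)=\nabla(\Phi_i^T\circ\Phi_j^S)(x)=\beta\alpha$, with no case analysis on the rank of $\alpha$.
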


\begin{proof}
 The fact that $T \circ S$ is $\mu$-a.e. approximately differentiable follows from the fact that $(\T^d, \Borel, \mu, T \circ S)$ forms a measure-preserving system satisfying Assumption~\ref{assump:LusinLip}. It only remains to prove \eqref{eq:ChainRuleForMPS}. By Proposition~\ref{prop:ApproxDiffPlusBoundPlusSmoothCovering}, there exist two collections of balls $\{ B_i \}_{i \geq 1}$, $\{ B^{\prime}_j \}_{j \geq 1}$, two collections of measurable sets $\{ A_i \}_{i \geq 1}$, $\{ A_j^{\prime} \}_{j \geq 1}$, which both cover $\T^d$ up to zero $\mu$-measure sets, and mappings $\Phi_i \in C^1(\overline{B}_i; \T^d)$ $\Psi_j \in C^1(\overline{B}^{\prime}_j; \T^d)$ such that 
 \begin{align*}
  &A_i \subseteq B_i, \quad T = \Phi_i \text{ on $A_i$}, \quad A_j^{\prime} \subseteq B_j^{\prime}, \quad \text{and} \quad S = \Psi_j \text{ on $A_j^{\prime}$.}
 \end{align*}
 Since $T$ is $\mu$-a.e. approximately differentiable, we may assume that $T$ is approximately differentiable everywhere on $A_i$
 Since $S_{\#} \mu = \mu$, we get $\mu(\T^d \setminus \cup_{i \geq 1} S^{-1}(A_i)) = 0$.
 Fix some $i , j \geq 1$ and note that $T \circ S = \Phi_i \circ \Psi_j$ $\mu$ everywhere on $S^{-1}(A_i) \cap A_j^{\prime}$. Select a point $x \in S^{-1}(A_i) \cap A_j^{\prime}$ of density one such that $S(x)$ is also a point of density one w.r.t. $A_i$. Observe that $\apD T(S(x)) = \nabla \Phi_i(\Psi_j(x))$ and $\apD S(x) = \nabla \Psi_j(x)$. Thus
\begin{equation*}\label{eq:WhatToProveWithSmoothFunctions}
  \apD (T \circ S)(x) = \nabla (\Phi_i \circ \Psi_j)(x) = \nabla \Phi_i(\Psi_j(x)) \nabla \Psi_j (x) = \apD T(S(x)) \apD S(x).
 \end{equation*}
\end{proof}

We end the subsection with a result which will be useful later. Given a small $\eps > 0$, the lemma below returns a scale $\delta > 0$ and a set $A_{\eps}$ whose complement has measure less than $\eps$ and such that in $A_{\eps}$ and below the scale $\delta$, the approximate gradient provides a good approximation of the dynamics.

\begin{lemma}\label{lemma:UsefulLemmaAboutTheGradientBeingAGoodApprox}
 Let $(\T^d, \Borel, \mu, T)$ be a measure-preserving system satisfying Assumption~\ref{assump:LusinLip}. If $\mu \ll \Leb^d$, then for any $\eps > 0$ there exists a set $A_{\eps} \subseteq \T^d$ and $\delta \in (0,1)$ so that $\mu (\T^d \setminus A_{\eps}) < \eps$ and for all $x,y \in A_{\eps}$ with $|x-y| < \delta$, we have 
 \begin{equation}\label{eq:WhatToProveInUsefulLemmaAboutTheGradientBeingAGoodApprox}
  |T(y) - T(x) - \apD T(x) (y-x)| < \eps |x-y|.
 \end{equation}
\end{lemma}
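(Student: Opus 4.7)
The plan is to combine the local $C^1$ cover from Proposition~\ref{prop:ApproxDiffPlusBoundPlusSmoothCovering} with the Lebesgue density theorem and the Lipschitz bound of Assumption~\ref{assump:LusinLip} to upgrade the (merely measure-theoretic) approximate differentiability of $T$ to a pointwise Taylor estimate on a set of large $\mu$-measure. Concretely, I would disjointify the sets $A_i$ from Proposition~\ref{prop:ApproxDiffPlusBoundPlusSmoothCovering}, discard a set where the function $g$ from Assumption~\ref{assump:LusinLip} is large so as to work on $\{g\le K\}$ with $\mu(\{g>K\})<\eps/4$, and reduce to finitely many indices $i\le N$ by choosing $N$ with $\mu(\bigcup_{i>N}(A_i\cap\{g\le K\}))<\eps/4$. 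On each compact ball $\overline{B}_i$, uniform continuity of $\nabla\Phi_i$ gives a scale $\delta_i^*>0$ for which the Taylor remainder of $\Phi_i$ is bounded by $(\eps/3)|x-y|$.

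Since $\mu\ll\Leb^d$, the Lebesgue density theorem combined with Egorov's theorem produces, for each $i\le N$, a subset $\tilde A_i\subseteq A_i\cap\{g\le K\}$ with $\mu((A_i\cap\{g\le K\})\setminus\tilde A_i)<\eps/(4N)$ and a uniform radius $r_i>0$ such that
\[
\Leb^d(B_r(x)\setminus(A_i\cap\{g\le K\}))<\beta\,\Leb^d(B_r(x)) \quad \forall\, x\in\tilde A_i,\ r\in(0,r_i],
\]
where $\beta>0$ will be fixed later. I then set $A_\eps:=\bigcup_{i\le N}\tilde A_i$ and $\delta:=\tfrac{1}{2}\min_{i\le N}\min(r_i,\delta_i^*)$, and the bound $\mu(\T^d\setminus A_\eps)<\eps$ follows directly.

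Given $x,y\in A_\eps$ with $|x-y|<\delta$, if $y$ lies in the same $A_i$ as $x$ the $\Phi_i$-Taylor bound gives the result. The hard part is the case $y\in A_j$ with $j\ne i$, since then $T(y)=\Phi_j(y)$ bears no a priori relation to $\Phi_i$'s Taylor polynomial at $x$ and approximate differentiability alone cannot yield a pointwise bound. To handle it, I would fix a small $\alpha\in(0,1)$ and set $\beta:=\tfrac{1}{2}(\alpha/(1+\alpha))^d$: this forces $B_{\alpha|x-y|}(y)\subseteq B_{(1+\alpha)|x-y|}(x)$ not to lie entirely in the complement of $A_i\cap\{g\le K\}$, producing some $z\in A_i\cap\{g\le K\}\cap B_{\alpha|x-y|}(y)$. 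Triangulating
\[
|T(y)-T(x)-\apD T(x)(y-x)|\le |T(y)-T(z)|+|T(z)-T(x)-\apD T(x)(z-x)|+|\apD T(x)|\,|y-z|,
\]
I would bound the first term by $e^{2K}\alpha|x-y|$ via Assumption~\ref{assump:LusinLip}, the middle one by $(\eps/3)(1+\alpha)|x-y|$ via the Taylor estimate for $\Phi_i$ on $\overline{B}_i$ (noting $|x-z|\le(1+\alpha)|x-y|<\delta_i^*$), and the last by $e^{3K}\alpha|x-y|$ via the gradient bound \eqref{eq:BoundOnApproximateGradient}. Picking $\alpha$ small enough that $(e^{2K}+e^{3K})\alpha\le\eps/3$ closes the estimate.
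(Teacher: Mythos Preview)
Your argument is correct, but it is considerably more elaborate than the paper's. The paper bypasses the $C^1$ cover and the Egorov-plus-triangulation step entirely: it picks $L$ with $\mu(\{g>L\})<\eps/2$, observes that $T$ is $e^{2L}$-Lipschitz on $\{g\le L\}$, and (after localising to small balls and charts) invokes Rademacher's theorem on a Lipschitz extension. At any point of differentiability $x$ the Taylor bound $|T(y)-T(x)-\apD T(x)(y-x)|<\eps|x-y|$ holds for all $y\in\{g\le L\}$ with $|y-x|<\delta_x$ for some $\delta_x>0$; one then sets $A_\eps:=\{g\le L\}\cap\{\delta_x>\delta\}$ for $\delta$ small. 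The key simplification is that the Lipschitz extension is honestly differentiable at $x$, so the Taylor estimate holds for \emph{every} nearby $y$, and no auxiliary point $z$ is needed to handle the case where $y$ falls in a different piece.

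Your route has the minor conceptual advantage of avoiding Rademacher and working entirely with the $C^1$ cover already provided by Proposition~\ref{prop:ApproxDiffPlusBoundPlusSmoothCovering}, and the density/triangulation trick you use is a robust way to upgrade approximate differentiability to a pointwise estimate. But for this particular lemma the paper's direct Lipschitz-plus-Rademacher argument is both shorter and more transparent.
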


\begin{proof}
Select $L \geq 1$ so that $\mu (\T^d \setminus \{ g \leq L \} ) \leq \eps / 2$. Then we note that $T$ is $2L$-Lipschitz on $\{ g \leq L \}$.
We prove that for $\mu$-a.e. $x \in \{ g \leq M \}$, there exists a $\delta_x > 0$ such that for all $y \in \{ g \leq M \} \cap B_{\delta_x}(x)$, we have
 \begin{equation}\label{eq:InClaimOfUsefulLemmabis}
  |T(y) - T(x) - \apD T(x)(y-x)| < \eps |x-y|.
 \end{equation}
 As in the proof of Proposition~\ref{prop:ApproxDiffPlusBoundPlusSmoothCovering}, by covering $\{ g \leq L \}$ with balls of sufficiently small radius and composing with an appropriate chart, we reduce the problem to studying a Lipschitz map on $\R^d$. By Rademacher's theorem, such a map is a.e. differentiable and in particular for any $\eps$ and any point of differentiability $x$, there exists $\delta_x > 0$ such that for all $y \in B_{\delta_x}(x)$, \eqref{eq:InClaimOfUsefulLemmabis} holds.
 To finish the proof, we notice that by taking $\delta > 0$ small enough with $A_{\eps} \coloneqq \{ g \leq L \} \cap \{ \delta_x > \delta \}$, we have $\mu(M \setminus A_{\eps}) < \eps$ and \eqref{eq:WhatToProveInUsefulLemmaAboutTheGradientBeingAGoodApprox} holds.
\end{proof}

\subsection{Lyapunov Exponents}
We employ Oseledets' theorem and the results of Section~\ref{subsec:ApproxDiff} on weak differentiability to establish the existence of Lyapunov exponents for systems satisfying Assumption~\ref{assump:LusinLip}.

\begin{theorem}\label{thm:LyapunovFirst}
 Let $(\T^d, \Borel, \mu, T)$ be a measure-preserving system with $\mu \ll \Leb^d$ and satisfying Assumption~\ref{assump:LusinLip}. 
 There exists a set $\Gamma \in \Borel$ of full $\mu$-measure and a function $k \colon \Gamma \to \N_{\geq 1}$ such that for all $x \in \Gamma$ there exists:
 \begin{itemize}
  \item numbers $\lambda_1(x) > \ldots > \lambda_{k(x)}(x)$ taking values in $\R \cup \{ - \infty \}$ and
  \item a flag $\R^d = V^1_x \supsetneq \ldots \supsetneq V^{k(x)}_x \supsetneq \{ 0 \}$
 \end{itemize}
 for which the following holds:
 \begin{enumerate}
  \item the maps $x \mapsto k(x)$, $x \mapsto \lambda_i(x)$ and $x \mapsto V_x^{i}$ are defined on $\Gamma$ and are measurable;
  \item $k \circ T = k$, $\lambda_i \circ T$ and $\apD T(x) V_x^{i} \subseteq V_{T(x)}^{i}$;
  \item we have 
  \begin{equation}
   \lim_{n \to \infty} \frac{1}{n} \log |\apD (T^n)(x) v| = \lambda_i(x), \quad \forall v \in V_x^{i} \setminus V_x^{i+1}
   \end{equation} 
   (with the convention that $V_x^{k+1} = \{ 0 \}$).
 \end{enumerate}
\end{theorem}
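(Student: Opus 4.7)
The plan is to apply Oseledets' multiplicative ergodic theorem (Theorem~\ref{thm:METOne}) to the cocycle generated by the approximate differential $\apD T$. The key inputs are Proposition~\ref{prop:ApproxDiffPlusBoundPlusSmoothCovering} (which gives $\mu$-a.e. approximate differentiability with the quantitative bound $|\apD T(x)|\le e^{3g(x)}$) and Proposition~\ref{prop:ChainRuleForMPS} (the chain rule), which together let us identify the iterated linear cocycle with $\apD(T^n)$.

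First I would set $A(x):=\apD T(x)$, which is a $\mu$-a.e. defined measurable map $\T^d \to \R^{d\times d}$ by Proposition~\ref{prop:ApproxDiffPlusBoundPlusSmoothCovering}; extending it by $A(x):=0$ on the null set where $\apD T$ is undefined causes no harm. The log-integrability hypothesis \eqref{eq:LogIntegrabiliytOfCocycle} of Theorem~\ref{thm:METOne} follows from Assumption~\ref{assump:LusinLip} and \eqref{eq:BoundOnApproximateGradient}, since
\begin{equation}
\int_{\T^d} \log^+ |A(x)| \, d\mu(x) \le 3 \int_{\T^d} g(x) \, d\mu(x) \le 3\|g\|_{L^p(\T^d,\mu)} < \infty,
\end{equation}
where we used $g\in L^p\subseteq L^1$ since $\mu$ is a probability measure. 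Theorem~\ref{thm:METOne} then produces a full-measure set $\Gamma_0$, a measurable $k$, numbers $\lambda_1(x)>\dots>\lambda_{k(x)}(x)\ge -\infty$, and a measurable flag $\R^d=V_x^1\supsetneq\dots\supsetneq V_x^{k(x)}\supsetneq\{0\}$ satisfying items (i)--(iii) of Theorem~\ref{thm:METOne} for the \emph{abstract} iterated cocycle $A^n(x)=A(T^{n-1}x)\cdots A(Tx)A(x)$.

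The bridge from the abstract cocycle to the geometric quantity $\apD(T^n)$ is the chain rule of Proposition~\ref{prop:ChainRuleForMPS}: inductively, for each fixed $n\ge 1$ there is a full-measure set on which
\begin{equation}
\apD(T^n)(x) = \apD T(T^{n-1}(x))\,\apD T(T^{n-2}(x))\cdots \apD T(x) = A^n(x).
\end{equation}
Note that $(\T^d,\Borel,\mu,T^j)$ satisfies Assumption~\ref{assump:LusinLip} for every $j\ge 1$ by the subadditivity remark following Assumption~\ref{assump:LusinLip}, so Proposition~\ref{prop:ChainRuleForMPS} is applicable in the induction $T^n = T\circ T^{n-1}$. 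Intersecting these countably many full-measure sets with $\Gamma_0$ gives a full-measure set $\Gamma$ on which $\apD(T^n)(x)=A^n(x)$ for every $n\in\N$. Consequently, for $x\in \Gamma$ and $v\in V_x^i\setminus V_x^{i+1}$,
\begin{equation}
\lim_{n\to\infty}\frac{1}{n}\log|\apD(T^n)(x)v| = \lim_{n\to\infty}\frac{1}{n}\log|A^n(x)v| = \lambda_i(x),
\end{equation}
which is item (iii). The measurability in (i) and the invariance identities in (ii) transfer directly from Theorem~\ref{thm:METOne}, replacing the linear action $A(x)V_x^i\subseteq V_{T(x)}^i$ by $\apD T(x)V_x^i \subseteq V_{T(x)}^i$ via the identity $A=\apD T$ $\mu$-a.e.

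The only real subtlety—and the step I would be most careful about—is the \textbf{almost-sure identification of the abstract iterated cocycle with $\apD(T^n)$}, because Oseledets' conclusions are stated pointwise for $A^n$, but the theorem's conclusion is about $\apD(T^n)$. This is handled exactly by the countable intersection above, which is legal because Proposition~\ref{prop:ChainRuleForMPS} produces a full-measure set for each $n$. Since $\mu$ need not charge inverses, we do not attempt to upgrade to a direct sum decomposition or to finite negative exponents here (such an upgrade would require the two-sided integrability \eqref{ass:invert} together with invertibility of $\apD T$, both of which would hold, for instance, in the RLF setting by the lower bound in Remark~\ref{rmk:strongerAssumption}, yielding the stronger Theorem~\ref{thm:LyapunovSobolev}).
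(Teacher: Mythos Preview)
Your proof is correct and follows exactly the same approach as the paper: verify log-integrability of $\apD T$ via Proposition~\ref{prop:ApproxDiffPlusBoundPlusSmoothCovering}, identify the iterated cocycle $A^n$ with $\apD(T^n)$ via the chain rule of Proposition~\ref{prop:ChainRuleForMPS}, and then invoke Theorem~\ref{thm:METOne}. If anything, you are more explicit than the paper about the countable intersection of full-measure sets needed to make the identification $A^n=\apD(T^n)$ hold simultaneously for all $n$.
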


\begin{remark}
In the setting of Theorem~\ref{thm:LyapunovFirst} with the additional assumption that $T$ is invertible and $(\T^d, \Borel, \mu, T^{-1})$ is a measure-preserving system satisfying Assumption~\ref{assump:LusinLip}, then Theorem~\ref{thm:LyapunovFirst} can be refined to obtain the following stronger statement:
There exists a measurable direct sum decomposition $\R^d = E^1_x \oplus \ldots \oplus E^{k(x)}_x$ such that $\apD T(x) E^i_x = E^{i}_{T(x)}$ for $\mu$-a.e. $x \in \T^d$ and $\lim_{n \to \infty} \frac{1}{n} \log |\apD (T^n)(x)| = \lambda_i(x)$ for all $v \in E^{i}_x \setminus \{ 0 \}$.
Additionnally, the angles between subspaces $E^{i}_{x}$ decay sub-exponentially along $\mu$-a.e. orbit.
\end{remark}

\begin{proof}[Proof of Theorem~\ref{thm:LyapunovFirst}]
    Proposition~\ref{prop:ChainRuleForMPS} implies that the second component of linear cocycle defined by $\apD T$ over $T$ iterated $n$ times equals $\apD (T^n)$.
    Moreover, Equation~\eqref{eq:BoundOnApproximateGradient} from Proposition~\ref{prop:ApproxDiffPlusBoundPlusSmoothCovering} gives
\begin{equation}
    \int_{\T^d} \log^+ |\apD T| \, d \mu < \infty.
\end{equation}
Hence, Theorem~\ref{thm:METOne} can be applied to the system $(\T^d, \Borel, \mu, T)$ and the measurable mapping $\apD T$.
\end{proof}

\subsection{Differentiability in Measure of RLF's and Lyapunov Exponents}\label{subsec:DifferentiabilityOfRLF}
Whenever the integrability exponent $p$ of the Sobolev velocity field is strictly larger than one, then the RLF is $\Leb^d$-a.e. approximately differentiable.
The case $p = 1$ was considered by C.~Le Bris and P.-L.~Lions \cite{CLBPLL04}, who proved that the associated flow is differentiable in measure. Specifically, let $b \in L^1([0,T]; W^{1,1}(\T^d; \R^d))$ be a divergence-free velocity field, and let $X$ denote its regular Lagrangian flow.
 Then for all $0 \leq s \leq t \leq T$ the map $X_{t, s}$ is differentiable in measure. Precisely, for any $\delta > 0$, we have
 \begin{equation} 
  \lim_{r \to 0} \Leb^{2d} \left( \left\{ (x,v) \in \T^d \times B_1(0) : \left| \dfrac{X_{t,s}(x + rv) - X_{t,s}(x)}{r} - W_{t,s}(x) v \right| > \delta \right\} \right) = 0
 \end{equation}
 where 
 for all $s\ge 0$ and a.e. $x\in \T^d$, the map $t \mapsto W_{t,s}(x)\in \R^{d \times d}$ is an absolutely continuous solution to
 \begin{equation}\label{eq:OdeForDifferentialInMeasure}
  \left\{
  \begin{array}{l}
  \dfrac{d}{dt} W_{t,s}(x) = \nabla b (t, X_{t,s}(x)) W_{t,s}(x); \medskip \\
  W_{s,s}(x) = \id. \\
  \end{array}
  \right.
 \end{equation}
 
\begin{remark}[Approximate Differentiability vs. Differentiability in Measure]
As shown in \cite{LAJM}, differentiability in measure is a weaker notion than almost everywhere approximate differentiability.
\end{remark}

 More recently, the $\BV$ case has been treated by S.~Bianchini and N.~De~Nitti \cite{SBNDN22}, who extended the result of C.~Le~Bris and P.-L.~Lions to the class of divergence-free velocity fields $b \in L^1([0,T]; \BV(\T^d; \R^d))$. In fact, their result covers the more general case of vector fields with absolutely continuous divergence bounded from below. They replaced the system \eqref{eq:OdeForDifferentialInMeasure} with the following one, and showed that it admits a unique solution.

  \begin{equation}\label{eq:OdeForDifferentialInMeasureBV}
  \left\{
  \begin{array}{l}
  \dfrac{d}{dt} W_{t,s}(x) = (\nabla b)_{x,s}(dt) W_{t-,s}(x); \medskip \\
  W_{s,s}(x) = \id \\
  \end{array}
  \right.
 \end{equation}
 where $W_{t-,s}(x)$ denotes the left limit and $(\nabla b)_{x,s}$ is obtained through a disintegration (i.e. $(\nabla b)_{x,s}$ is a measure on $[0,T]$) along trajectories starting at time $s$, precisely
 \begin{equation}\label{eq:BianchiniDeNittiDisintegration}
     \int \int \varphi(t,x) (\nabla b)(dx \, dt) = \int \int \varphi(t, X_{t,s}(x)) (\nabla b)_{x,s}(dt) \, dx \quad \forall \varphi \in C^{\infty}([0,T] \times \T^d).
 \end{equation}
 In the $W^{1,1}$ Sobolev case, it is clear that
 \begin{equation}\label{eq:ChainRuleForDifferentialsInMeasureRLF}
     W_{t,s}(x) = W_{t,r}(X_{r,s}(x)) W_{r,s}(x) \quad \text{for all $0 \leq s \leq r \leq t \leq T$ and a.e. $x \in \T^d$}
 \end{equation}
 and
 \begin{equation}\label{eq:LogIntegrabilityForDifferentialsInMeasureRLF}
     \int_{\T^d} \log^+ |W_{t,s}(x)| \, dx \leq \int_s^t \int_{\T^d} |\nabla b| (dx \, dt).
 \end{equation}
 Indeed, this follows from the uniqueness of solutions to \eqref{eq:OdeForDifferentialInMeasure} together with Gr\"onwall's lemma. In the $\BV$ case, one observes that for the disintegration measure obtained via \eqref{eq:BianchiniDeNittiDisintegration}, we have
 \begin{equation}\label{eq:UsefulPropertyOfTheDisintegration}
     (\nabla b)_{x,s} = (\nabla b)_{X_{r,s}(s),r}
 \end{equation}
 for all $s \leq r$ and a.e. $x \in \T^d$.
 This follows from the fact that the flow is measure-preserving.
 Then \eqref{eq:ChainRuleForDifferentialsInMeasureRLF} follows from the uniqueness of solutions to \eqref{eq:OdeForDifferentialInMeasureBV} together with the fact that
 \begin{align*}
     \dfrac{d}{dt} \left[ W_{t,r}(X_{r,s}(x)) W_{r,s}(x) \right] &= (\nabla b)_{X_{r,s}(x),r}(dt) W_{t-,r}(X_{r,s}(x)) W_{r,s}(x) \\
     &\stackrel{\eqref{eq:UsefulPropertyOfTheDisintegration}}{=}
     (\nabla b)_{x,s}(dt) W_{t-,r}(X_{r,s}(x)) W_{r,s}(x).
\end{align*}
In addition, \eqref{eq:LogIntegrabilityForDifferentialsInMeasureRLF} follows from \cite[Theorem 4.1]{SBNDN22}. As for the RLF, we will write $W_{t} = W_{t,0}$.

\begin{proof}[Proof of Theorem~\ref{thm:LyapunovSobolev}]
    From the discussion above, we have
    \[
     W_{n}(x) = W_{1}(X_{1}^{n-1}(x)) \ldots W_{1}(X_{1}(x)) W_{1}(x) \quad and \quad \int_{\T^d} \log^+ |W_{1}(x)| \, dx \leq \int_0^1 \int_{\T^d} |\nabla b| (dx \, dt).
    \]
    Thus, the theorem follows from the multiplicative ergodic theorem, see Theorem~\ref{thm:METOne} and Remark~\ref{rmk:StrongerMET}.
\end{proof}

\begin{remark}
    If, in Theorem~\ref{thm:LyapunovSobolev}, one assumes the additional condition $p > 1$, then the measure-preserving system $(\T^d, \Borel, \Leb^d, X_{1})$ satisfies Assumption~\ref{assump:LusinLip}. Therefore, when $p > 1$, Theorems~\ref{thm:LyapunovFirst} and~\ref{thm:LyapunovSobolev} coincide for RLFs. Moreover, in this case, we have $W_1 = \apD X_1$.
\end{remark}

\section{Bounds on entropy}
 
The following result provides an upper bound on the entropy of maps satisfying Assumption~\ref{assump:LusinLip} for some $p > 1$. Notably, it holds even when the measure $\mu$ is singular with respect to the Lebesgue measure $\mathcal{L}^d$.

\begin{theorem}\label{thm:BoundEntropyGeneral}
Let $(\mathbb{T}^d, \mathcal{B}, \mu, T)$ be a measure-preserving dynamical system satisfying Assumption~\ref{assump:LusinLip} with some $p \in (1, \infty]$. Then, $T$ has finite metric entropy, and
\begin{equation}\label{eq:BoundEntropyGeneralFormula}
    h_\mu(T) \leq C(d, p) \| g \|_{L^p(\mathbb{T}^d, \mu)}.
\end{equation}
\end{theorem}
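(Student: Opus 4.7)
The plan is to combine the concavity estimate from the proof of Lemma~\ref{lem:UsefulEntLemma} with a decomposition of the dynamics along the level sets of $g$. After replacing $g$ with $g+1$ (which multiplies the Lusin--Lipschitz constant by $e^2$ and changes $\|g\|_{L^p}$ by at most an additive constant), I would assume without loss of generality that $g \geq 1$. Introduce the countable partition $\eta := \{F_k\}_{k \geq 1}$ with $F_k := \{2^{k-1} \leq g < 2^k\}$; by Assumption~\ref{assump:LusinLip}, the restriction $T|_{F_k}$ is $e^{2^{k+1}}$-Lipschitz. Let $\xi_n$ be the partition of $\mathbb{T}^d$ into cubes of side $2^{-n}$, which generates $\Borel$ up to $\mu$-null sets. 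By Theorem~\ref{thm:KolmogorovSinai}, it suffices to establish the uniform bound $h_\mu(T, \xi_n) \leq C(d,p) \|g\|_{L^p(\mu)}$ for all $n$ and then pass to the limit.

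The proof of Lemma~\ref{lem:UsefulEntLemma} gives $h_\mu(T, \xi_n) \leq H_\mu(T^{-1}\xi_n | \xi_n)$. Writing identity \eqref{eq:UsefulPropertyOfEntropy} in two orderings for $H_\mu(T^{-1}\xi_n \vee \eta | \xi_n)$ and using non-negativity of conditional entropy yields
\begin{equation*}
 H_\mu(T^{-1}\xi_n | \xi_n) \leq H_\mu(\eta | \xi_n) + H_\mu(T^{-1}\xi_n | \xi_n \vee \eta).
\end{equation*}
Each atom of $\xi_n \vee \eta$ has the form $B \cap F_k$; since $T|_{F_k}$ is $e^{2^{k+1}}$-Lipschitz, $T(B \cap F_k)$ has diameter at most $e^{2^{k+1}} \sqrt{d}\, 2^{-n}$, so it intersects at most $C(d) e^{2^{k+1} d}$ cubes of $\xi_n$. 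The concavity estimate from the proof of Lemma~\ref{lem:UsefulEntLemma} then yields
\begin{equation*}
 H_\mu(T^{-1}\xi_n | \xi_n \vee \eta) \leq C(d) + 2d \sum_{k \geq 1} 2^{k} \mu(F_k) \leq C(d) + 4d \|g\|_{L^p(\mu)},
\end{equation*}
using $2^{k-1} \leq g$ on $F_k$ and $\|g\|_{L^1(\mu)} \leq \|g\|_{L^p(\mu)}$ since $\mu$ is a probability.

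For the first term, $H_\mu(\eta | \xi_n) \leq H_\mu(\eta)$, and I would estimate $H_\mu(\eta)$ directly. Markov's inequality gives $\mu(F_k) \leq (2 \|g\|_{L^p} 2^{-k})^p$, and combining with the elementary bound $-x \log x \leq \tfrac{p}{(p-1)e} x^{1/p}$ on $(0,1]$ and summing the resulting geometric series yields $H_\mu(\eta) \leq C(p) \|g\|_{L^p}$. The case $p = \infty$ is simpler: $T$ is globally $e^{2\|g\|_\infty}$-Lipschitz, and the classical entropy bound for Lipschitz maps applies directly. Putting everything together gives $h_\mu(T, \xi_n) \leq C(d,p) \|g\|_{L^p(\mu)}$ uniformly in $n$, and Theorem~\ref{thm:KolmogorovSinai} concludes.

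The main obstacle is handling the possible unboundedness of $g$: the naive approach, bounding the diameter of $T(\xi_n(x))$ through the essential supremum of $g$ on the cube $\xi_n(x)$, fails because that supremum need not lie in $L^p(\mu)$. The level-set decomposition resolves this by pairing the Lipschitz scale $e^{2^{k+1}}$ with the Markov decay $\mu(F_k) \leq 2^{-kp}$, turning the effective cost into the summable quantity $\sum_k 2^k \mu(F_k) \sim \|g\|_{L^1}$. The sharp bound on $H_\mu(\eta)$, which must be linear in $\|g\|_{L^p}$ for the final estimate to be of the correct order, is the technically most delicate point and is where the hypothesis $p > 1$ enters in an essential way.
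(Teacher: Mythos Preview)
Your approach is correct and genuinely different from the paper's. The paper works with the maximal ergodic function $g^\star$ rather than $g$: its near-invariance $g^\star(Tx) \leq 2g^\star(x)$ lets one refine the cube partition $\xi_k$ simultaneously with a \emph{finite} partition $\beta_k$ into level sets of $g^\star$, and then bound pointwise the number of atoms of $\beta_k\vee\xi_k$ hit by $T$ by $C(d)e^{5dg^\star(x)}$; the $L^p$--$L^p$ maximal ergodic inequality (Corollary~\ref{cor:MaxErgFct}) converts this into a bound in terms of $\|g\|_{L^p}$. Your route bypasses the maximal function entirely: you keep the level-set partition $\eta$ fixed and countable, peel it off via the conditional-entropy identity, and control $H_\mu(\eta)$ directly by Markov's inequality combined with the elementary bound $-x\log x \leq C(p)x^{1/p}$. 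This is more elementary---no maximal ergodic theorem, no near-invariance---and isolates cleanly where $p>1$ enters (finiteness of $C(p)=\tfrac{p}{(p-1)e}$ in your $H_\mu(\eta)$ estimate, versus the strong-type maximal inequality in the paper's argument). The paper's approach, on the other hand, sets up machinery that is reused verbatim in the proof of the Ruelle inequality (Theorem~\ref{thm:RuelleGeneral}).

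One small gap: as written, your argument yields $h_\mu(T) \leq C(d,p)(\|g\|_{L^p}+1)$ rather than $C(d,p)\|g\|_{L^p}$. The additive constant arises both from the replacement $g \mapsto g+1$ and from the $\log C(d)$ in the cube-counting, and it survives the limit $n\to\infty$. The paper removes its own additive constant by iteration: apply the preliminary bound to $T^j$, use $h_\mu(T^j)=jh_\mu(T)$ together with $\|g_j\|_{L^p}\le j\|g\|_{L^p}$ for $g_j=\sum_{\ell=0}^{j-1}g\circ T^\ell$, divide by $j$, and let $j\to\infty$. The same trick completes your argument.
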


The idea of the proof  
is to divide $\T^d$ into a small grid of cubes and then further refine this partition into level sets $\{ 2^{i-1} \leq g^{\star} \leq 2^{i}\}$ where $g^{\star}$ is the maximal ergodic function; call this partition $\xi$. To estimate the entropy of $T$ with respect to $\xi$, for each $A \in \xi$, we estimate how many $B \in \xi$ can intersect $T(A)$. On one hand, this is determined by the smallest ball containing $T(A)$, whose radius can be estimated with $g^{\star}$. On the other hand, due to the almost-invariance of $g^{\star}$, if $A \subseteq \{g^{\star} \leq 2^{i} \}$ then $T(A) \subseteq \{g^{\star} \leq 2^{i+1} \}$, giving a further restriction on how many $B \in \xi$ may intersect $T(A)$.

\begin{proof}[Proof of Theorem~\ref{thm:BoundEntropyGeneral}]
Since the measure-preserving system $(\T^d, \Borel, \mu, T)$ satisfies Assumption~\ref{assump:LusinLip}, Subsection~\ref{subsec:MaxErgFct} implies that there exists a function $g^{\star} \in L^p(\T^d, \mu)$ such that $\| g^{\star} \|_{L^p(\T^d, \mu)} \leq C(p) \| g \|_{L^p(\T^d, \mu)}$,
\[
 \metric(T(x),T(y)) \leq \exp(g^{\star}(x) + g^{\star}(y)) \metric(x,y) \quad \forall x,y \in \T^d
\]
and $g^{\star}(Tx) \leq g^{\star}(x)$ for all $x \in \T^d$.
For any integer $k \geq 2$, we call $\xi_k$ the partition of $\T^d$ into $d$-dimensional open cubes whose sides have length $2^{-k}$. Note that $\# \xi_k = 2^{kd}$. For any integer $k \geq 2$, we call $\beta_k$ the partition of $\T^d$ into sets 
$$B_1^k = \{ g^{\star} < 2 \}, \ldots, B_i^k = \{ 2^{i-1} \leq g^{\star} < 2^{i} \}, \ldots, B_{k}^{k} = \{ 2^{k-1} \leq g^{\star} \}.$$ 
Note that $\Borel = \sigma(\cup_{k \geq 1} \beta_k \vee \xi_k )$ up to zero measure sets. 
Additionally, for all $k \geq 2$, $\beta_k \vee \xi_k$ are finite partitions and hence have finite entropy. Therefore, by the Kolmogorov-Sinai theorem (Theorem~\ref{thm:KolmogorovSinai})
\begin{equation}\label{eq:BoundEntropyGeneralApplicationOfKolmogorovSinai}
 h_{\mu}(T) = \lim_{k \to \infty} h_{\mu}(T, \beta_k \vee \xi_k).
\end{equation}
Now, fix an integer $k \geq 2$ and estimate $h_{\mu}(T, \beta_k \vee \xi_k)$. We define the function $\nu_k \colon \T^d \to \R$ as
\begin{equation}
 \nu_k (x) = \# \left\{ A \in \beta_k \vee \xi_k : \mu \Big( (\beta_k \vee \xi_k)(x) \cap T^{-1}(A) \Big) > 0 \right\}.
\end{equation}
Then, by Lemma~\ref{lem:UsefulEntLemma}, we find
\begin{equation}\label{eq:BoundEntropyGeneralApplicationOfUsefulLemma}
 h_{\mu}(T, \beta_k \vee \xi_k) \leq \int_{\T^d} \log \nu_k \, d\mu.
\end{equation}
\\
\textbf{Claim:} Let $x \in \T^d$ be arbitrary. Then if $g^{\star}(x) < 2^{k-1}$, 
\begin{enumerate}
 \item $(\beta_k \vee \xi_k)(x) \subseteq T^{-1}(\{ g^{\star} \leq 4 g^{\star}(x) \})$; \label{item:BoundEntropyGeneralClaimPointOne}
 \item $(\beta_k \vee \xi_k)(x) \subseteq T^{-1}(B_{d \e^{3 g^{\star}(x)} 2^{-k}}(T(x))$. \label{item:BoundEntropyGeneralClaimPointTwo}
\end{enumerate}
 We start by proving \ref{item:BoundEntropyGeneralClaimPointOne}. Assume that $g^{\star}(x) < 2^{k-1}$ and let $y \in (\beta_k \vee \xi_k)(x)$. Then by definition of $\beta_k$, it holds that $g^{\star}(y) \leq 2 g^{\star}(x)$. Hence, $g^{\star}(Ty) \leq 2 g^{\star}(y) \leq 4 g^{\star}(x)$. Thus \ref{item:BoundEntropyGeneralClaimPointOne} holds.
 We now prove \ref{item:BoundEntropyGeneralClaimPointTwo}. Again, assume $g^{\star}(x) < 2^{k-1}$ and remember that $g^{\star}(y) \leq 2 g^{\star}(x)$ for every $y \in (\beta_k \vee \xi_k)(x)$. Hence,
 \begin{equation}
  \metric(Tx,Ty) \leq \e^{g^{\star}(x) + g^{\star}(y)} \metric(x,y) \leq \e^{3 g^{\star}(x)} \metric(x,y) \leq \e^{3 g^{\star}(x)} \diam(\xi_k(x)) \leq d \e^{3 g^{\star}(x)} 2^{-k}.
 \end{equation}
 Thus \ref{item:BoundEntropyGeneralClaimPointTwo} holds.

 \medskip
Due to the claim above, if $g^{\star}(x) < 2^{k-1}$ then 
\begin{align*}
 \nu_k(x) &\leq \# \left\{ A \in \beta_k \vee \xi_k : \mu \Big( T^{-1}(\{ g^{\star} \leq 4 g^{\star}(x) \}) \cap T^{-1}(B_{d \e^{3 g^{\star}(x)} 2^{-k}}(T(x)) \cap T^{-1}(A) \Big) > 0 \right\}  \\
  &\leq \# \left\{ A \in \beta_k \vee \xi_k : \mu \Big( \{ g^{\star} \leq 4 g^{\star}(x) \} \cap B_{d \e^{3 g^{\star}(x)} 2^{-k}}(T(x)) \cap A \Big) > 0 \right\}. 
\end{align*}
We estimate the last term by counting how many $A \in \beta_k \vee \xi_k$ may have non-trivial intersection with $\{ g^{\star} \leq 4 g^{\star}(x) \} \cap B_{d \e^{3 g^{\star}(x)} 2^{-k}}(T(x))$. 
By definition, $\beta_k$ contains at most $1 + \log_2 (4 g^{\star}(x) + 1)$ many elements having non trivial intersection with $\{ g^{\star} \leq 4 g^{\star}(x) \}$.
Since $\xi_k$ is composed of cubes with sides of lenght $2^{-k}$, $\xi_k$ contains at most $C(d) (\e^{3 g^{\star}(x)} + 1)^d$ having non-trivial intersection with $B_{d \e^{3 g^{\star}(x)} 2^{-k}}(T(x))$. Thus
\begin{equation}
    \nu_k(x) \leq C(d) (\e^{3 g^{\star}(x)} + 1)^d \Big(1 + \log_2 (4 g^{\star}(x) + 1) \Big) \leq C(d) \e^{5 d g^{\star}(x)}.
\end{equation}
In the case where $g^{\star}(x) \geq 2^{k-1}$, we find
\begin{equation}
 \nu_k(x) \leq \# (\beta_k \vee \xi_k) \leq \# \beta_k \cdot \# \xi_k \leq k \cdot 2^{kd} \leq 2^{k(d+1)} \leq 2^{d+1} g^{\star}(x)^{d+1} \leq C(d) \e^{g^{\star}(x)}.
\end{equation}
Thus, $\nu_k(x) \leq C(d) \e^{5d g^{\star}(x)}$. Hence, 
\begin{equation}
 \int_{\T^d} \log \nu_k(x) \, d \mu(x) \leq 5d \int_{\T^d} g^{\star}(x) \, d \mu(x) + \log(C(d)) \leq C(d,p) \| g \|_{L^p( \T^d, \mu )} + \log(C(d)).
\end{equation}
Hence, due to \eqref{eq:BoundEntropyGeneralApplicationOfKolmogorovSinai} and \eqref{eq:BoundEntropyGeneralApplicationOfUsefulLemma}, we find
\begin{equation}\label{eqn:toimprove}
 h_{\mu}(T) = \lim_{k \to \infty}  h_{\mu}(T,  \beta_k \vee \xi_k) \leq C(d,p) \| g \|_{L^p( \T^d, \mu )} + \log(C(d)).
\end{equation}
To finish the proof, we improve \eqref{eqn:toimprove} getting rid of the second addend in the right-hand side. We observe that for any integer $j \geq 1$, the measure preserving-system $(\T^d, \Borel, \mu, T^j)$ satisfies Assumption~\ref{assump:LusinLip} with $g_j$ given by
\begin{equation}
 g_j = \sum_{\ell = 0}^{j - 1} g \circ T^{\ell}.
\end{equation}
Therefore, by \eqref{eqn:toimprove}, we find
\begin{equation}\label{BoundEntropyGeneralApplicationOfFirstBound}
 h_{\mu}(T^j) \leq C(d,p) \| g_j \|_{L^p( \T^d, \mu )} + \log(C(d)) = C(d,p) j \| g \|_{L^p( \T^d, \mu )} + \log(C(d))
\end{equation}
where we used the fact that $T$ is measure-preserving in the last equality. We observe, due to Proposition~\ref{prop:EntIterationsInversions}, that the left-hand side in \eqref{BoundEntropyGeneralApplicationOfFirstBound} equals $j h_{\mu}(T)$ and hence by dividing \eqref{BoundEntropyGeneralApplicationOfFirstBound} by $j$ and passing to the limit as $j \to \infty$, we find \eqref{eq:BoundEntropyGeneralFormula}. 
\end{proof}

 \subsection{Ruelle's Inequality}\label{subsec:Ruelle}
We prove Ruelle's inequality for measure-preserving systems $(\T^d, \Borel, \mu, T)$ satisfying Assumption~\ref{assump:LusinLip} with an invariant measure $\mu \ll \Leb^d$. 
\begin{theorem}\label{thm:RuelleGeneral}
 Let $(\T^d, \Borel, \mu, T)$ be a measure-preserving system satisfying Assumption~\ref{assump:LusinLip} with some $p \in (1, \infty]$. Assume $\mu \ll \Leb^d$. Then $T$ satisfies Ruelle's inequality, i.e. 
\begin{equation}\label{eq:RuelleGeneralFormulaInTheorem}
  h_{\mu}(T) \leq \int_{\T^d} \sum_{i = 1}^{k(x)} \lambda_i^+(x) m_i(x) \, d \mu(x).
 \end{equation}
\end{theorem}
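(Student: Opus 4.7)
The plan is to adapt the proof of Theorem~\ref{thm:BoundEntropyGeneral} to the iterate $T^n$, using a partition at a scale small enough that the approximate differential $\apD T^n(x)$ provides a genuine affine approximation on a large set. The counting function $\nu$ appearing in Lemma~\ref{lem:UsefulEntLemma} will then be controlled by the product of singular values $\chi_i(\apD T^n(x))$, and dividing the resulting entropy bound by $n$ and invoking Equation~\eqref{eq:SumOfPositiveLyapunovExpsAsLimit} will produce $\sum_i \lambda_i^+(x) m_i(x)$ on the right-hand side of~\eqref{eq:RuelleGeneralFormulaInTheorem} in the limit $n \to \infty$.

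More concretely, I would fix $n \geq 1$ and $\eps > 0$. The iterate $T^n$ satisfies Assumption~\ref{assump:LusinLip} with $g_n = \sum_{\ell=0}^{n-1} g \circ T^\ell$, and Proposition~\ref{prop:ChainRuleForMPS} identifies its approximate differential as $\apD T^n(x) = \apD T(T^{n-1}x)\cdots\apD T(x)$. Lemma~\ref{lemma:UsefulLemmaAboutTheGradientBeingAGoodApprox} applied to $T^n$ produces a set $A_{n,\eps}$ with $\mu(\T^d \setminus A_{n,\eps}) < \eps$ and a scale $\delta_{n,\eps} > 0$ on which $T^n$ is $\eps$-close to its affine linearization. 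I would then use, in the spirit of Theorem~\ref{thm:BoundEntropyGeneral}, the partition $\zeta_k = \xi_k \vee \{A_{n,\eps}, A_{n,\eps}^c\} \vee \beta_{n,k}$, where $\xi_k$ is the grid of cubes of side $2^{-k} < \delta_{n,\eps}$ and $\beta_{n,k}$ is the dyadic level-set decomposition of the maximal function $g_n^*$ associated with $g_n$ and $T^n$. Applying Lemma~\ref{lem:UsefulEntLemma} to $T^n$ on $\zeta_k$ and splitting the integrand into the contributions from $A_{n,\eps}$ and its complement, I would estimate the counting function $\nu_k(x)$ as follows: for $x \in A_{n,\eps}$, the set $T^n(\zeta_k(x))$ lies in an ellipsoid of semi-axes $(\chi_i(\apD T^n(x))+\eps)\,2^{-k}$, whence $\log \nu_k(x) \leq C(d) + \sum_{i=1}^d \log^+\chi_i(\apD T^n(x))$, up to a negligible $\log\log g_n^*$ correction coming from the number of $\beta_{n,k}$-cells crossed; for $x \in A_{n,\eps}^c$, the crude Lipschitz bound of Assumption~\ref{assump:LusinLip} and the almost-invariance of $g_n^*$ give $\log\nu_k(x) \leq C(d)\,g_n^*(x) + C(d)$, exactly as in Theorem~\ref{thm:BoundEntropyGeneral}. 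Sending $k \to \infty$ via the Kolmogorov--Sinai theorem, applying H\"older on $A_{n,\eps}^c$ with $\|g_n^*\|_{L^p} \leq C(p)\,n\,\|g\|_{L^p}$, and using $h_\mu(T^n) = n\,h_\mu(T)$ from Proposition~\ref{prop:EntIterationsInversions} yields
\begin{equation*}
 n\, h_\mu(T) \leq o(n) + \int_{\T^d} \sum_{i=1}^d \log^+\chi_i(\apD T^n(x))\, d\mu(x) + C(d,p)\,\eps^{1-1/p}\,n\,\|g\|_{L^p(\T^d,\mu)}.
\end{equation*}

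The final step divides by $n$, sends $n \to \infty$, and then $\eps \to 0$. Equation~\eqref{eq:SumOfPositiveLyapunovExpsAsLimit} supplies the pointwise convergence $\tfrac{1}{n}\sum_i \log^+\chi_i(\apD T^n(x)) \to \sum_i \lambda_i^+(x) m_i(x)$ on the full-measure set of Theorem~\ref{thm:LyapunovFirst}; passage under the integral is justified by Vitali's convergence theorem, since $\log^+|\apD T^n(x)| \leq 3\,g_n(x)$ (from~\eqref{eq:BoundOnApproximateGradient} and Proposition~\ref{prop:ChainRuleForMPS}) together with $\|g_n/n\|_{L^p} \leq \|g\|_{L^p}$ deliver $L^p$-uniform integrability. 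The main obstacle is the control of the contribution from the bad set $A_{n,\eps}^c$: its a priori size is of order $n$, so it would spoil the division by $n$ if the H\"older factor $\mu(A_{n,\eps}^c)^{1-1/p}=\eps^{1-1/p}$ could not be pushed to zero. This gain, and hence the proof itself, requires strictly $p > 1$, which is precisely the mechanism that fails in the $\BV$ or $W^{1,1}$ setting and underlies the open question stated after Theorem~\ref{thm:RuelleRLF}.
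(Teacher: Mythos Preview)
Your proposal is correct and follows essentially the same approach as the paper's proof: combine the grid partition with level sets of the maximal ergodic function and the good/bad set from Lemma~\ref{lemma:UsefulLemmaAboutTheGradientBeingAGoodApprox}, bound $\nu_k$ by a product of singular values on the good set and by $e^{C g^\star}$ on the bad set, and pass to the iterate to remove the additive constants. The only cosmetic difference is the order of operations: the paper first sends $\eps\to 0$ to obtain the ``almost Ruelle'' bound $h_\mu(T)\le \log C(d)+\int\sum_i\log^+\chi_i(\apD T)\,d\mu$ for a generic $T$, and \emph{then} applies this to $T^j$ and divides by $j$; you instead work with $T^n$ from the start and send $n\to\infty$ before $\eps\to 0$, which is equally valid since the bad-set contribution $C(p)\,\eps^{1/p'}\,n\|g\|_{L^p}$ scales linearly in $n$.
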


\begin{proof}[Proof of Theorem~\ref{thm:RuelleRLF}]
It follows from Theorem~\ref{thm:RuelleGeneral} applied to the map $X_1$.  
Indeed, for $p > 1$, the measure-preserving system  $(\T^d, \Borel, \Leb^d, X_1)$ satisfies Assumption~\ref{assump:LusinLip} and $g \in L^p(\T^d)$ by Remark~\ref{rmk:strongerAssumption}.
\end{proof}

\begin{corollary}
    Let $b \in L^1_{\loc}(\R_+, W^{1,p}(\T^d, \R^d))$ be a $1$-periodic, divergence-free velocity field with associated regular Lagrangian flow $X_t(x)$. Then,
    \begin{equation}
        h_{\Leb^d}(X_1) \leq C(d) \fint_0^1 \int_{\T^d} |\nabla b(t,x)| \, dx \, dt.
    \end{equation}
\end{corollary}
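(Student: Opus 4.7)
The plan is to chain together two results already in hand: Ruelle's inequality from Theorem~\ref{thm:RuelleRLF}, which bounds entropy by the sum of positive Lyapunov exponents weighted by multiplicities, and the a priori bound~\eqref{eq:EstimateOnTopLyapunovExp}, which controls the integrated top Lyapunov exponent by the space-time integral of $|\nabla b|$. The bridge between the two is the elementary observation that the sum $\sum_{i=1}^{k(x)} \lambda_i^+(x) m_i(x)$ is controlled, pointwise $\mu$-a.e., by $d$ times the top Lyapunov exponent.

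First, since $p > 1$, Theorem~\ref{thm:RuelleRLF} is applicable and yields
\begin{equation*}
    h_{\Leb^d}(X_1) \leq \int_{\T^d} \sum_{i = 1}^{k(x)} \lambda_i^+(x)\, m_i(x) \, dx.
\end{equation*}
Next, I would invoke formula~\eqref{eq:SumOfPositiveLyapunovExpsAsLimit}, which expresses the integrand as $\lim_{n \to \infty} \tfrac{1}{n} \sum_{i=1}^d \log^+ \chi_i(W_n(x))$. Since the $i$-th singular value $\chi_i(W_n(x))$ is bounded by the operator norm $|W_n(x)|$, one gets $\sum_{i=1}^d \log^+ \chi_i(W_n(x)) \leq d \log^+ |W_n(x)|$, and passing to the limit gives the pointwise estimate
\begin{equation*}
    \sum_{i=1}^{k(x)} \lambda_i^+(x)\, m_i(x) \leq d\, \lambda_{\max}^+(x) \quad \text{for $\Leb^d$-a.e. } x \in \T^d.
\end{equation*}
Because $X_1$ is measure-preserving and $W_n$ arises as its differential in measure, the divergence-free condition gives $\det W_n \equiv 1$ a.e., so $|W_n(x)| \geq 1$ and hence $\lambda_{\max}^+ = \lambda_{\max}$.

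Finally, I would apply~\eqref{eq:EstimateOnTopLyapunovExp} (whose justification for $\BV$ vector fields is referenced in Subsection~\ref{subsec:DifferentiabilityOfRLF} and which a fortiori covers $W^{1,p}$) to deduce
\begin{equation*}
    \int_{\T^d} \lambda_{\max}(x) \, dx \leq \fint_0^1 \int_{\T^d} |\nabla b(t,x)| \, dx \, dt,
\end{equation*}
and combining the three inequalities above yields the corollary with $C(d) = d$.

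The content here is essentially bookkeeping; the only genuine point of care is the passage from singular values to operator norm when upper-bounding $\sum \lambda_i^+ m_i$, which relies on~\eqref{eq:SumOfPositiveLyapunovExpsAsLimit}, and making sure that Fatou's lemma (implicit in \eqref{eq:EstimateOnTopLyapunovExp}) can be applied, which in turn uses that $\log|W_n| \geq 0$ thanks to $\det W_n = 1$. Neither of these is an obstacle, so the main conceptual work of this corollary is already absorbed into the preceding theorems, and the corollary itself is a clean combination.
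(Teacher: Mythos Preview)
Your proposal is correct and follows essentially the same route as the paper's proof: apply Theorem~\ref{thm:RuelleRLF}, bound $\sum_i \lambda_i^+ m_i \le d\,\lambda_{\max}$ pointwise, then invoke~\eqref{eq:EstimateOnTopLyapunovExp}. Your write-up is in fact more detailed than the paper's---you justify the middle inequality via~\eqref{eq:SumOfPositiveLyapunovExpsAsLimit} and explain why $\lambda_{\max}^+=\lambda_{\max}$ using $\det W_n=1$, whereas the paper simply writes the chain of three inequalities in one displayed line.
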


\begin{proof}
This result follows immediately from Theorem~\ref{thm:RuelleRLF} and Equation~\eqref{eq:EstimateOnTopLyapunovExp}. Indeed,
\begin{equation}
 h_{\Leb^d}(T) \leq \int_{\T^d} \sum_{i = 1}^{k(x)} \lambda_i^+(x) m_i(x) \, d x \leq d \int_{\T^d} \lambda_{\max} \, dx \leq d \fint_0^1 \int_{\T^d} |\nabla b(t,x)| \, dx dt.
\end{equation}
\end{proof}

The set up of the proof is now standard \cite{RMSL12, QMXJSZS09}: the domain, here $\T^d$, is decomposed into small pieces forming a partition $\xi$ and for each element $A \in \xi$ of the partition, we estimate how many $B \in \xi$ can intersect $T(A)$. In the smooth setting, this depends on the singular values of $\nabla T$ in $A$ which eventually lead to Ruelle's inequality. In our singular setting, we employ Lemma~\ref{lemma:UsefulLemmaAboutTheGradientBeingAGoodApprox} to remove a small set, so that the approximate gradient describes the dynamics well in the remaining domain. 
In this bad small set, which is not invariant for the dynamics, we use the maximal ergodic function $g^{\star}$ in the same way as in the proof of Theorem~\ref{thm:BoundEntropyGeneral}, refining the partition according to the level sets of $g^{\star}$.

\begin{proof}[Proof of Theorem~\ref{thm:RuelleGeneral}]

Let $\eps > 0$ be arbitrary. By Lemma~\ref{lemma:UsefulLemmaAboutTheGradientBeingAGoodApprox}, there exists $A_{\eps} \subseteq \T^d$ and $\delta > 0$ such that $\mu(\T^d \setminus A_{\eps}) < \eps$ and for all $x,y \in A_{\eps}$ with $|x-y| < \delta$, we have 
\begin{equation}
 |T(y) - T(x) - \apD T(x) (y-x)| < |x-y|.
\end{equation}
For any integer $k \geq 1$, we call $\xi_k$ the partition of $\T^d$ into $d$-dimensional open cubes whose sides have length $2^{-k}$. Note that $\# \xi_k = 2^{kd}$. For any integer $k \geq 2$, we call $\beta_k$ the partition of $\T^d$ into the sets
\[
 A_{\eps}, \, B_1^k = \{ g^{\star} < 2 \} \cap A_{\eps}^c , \ldots, B_i^k = \{ 2^{i-1} \leq g^{\star} < 2^{i} \} \cap A_{\eps}^c, \ldots, B_{k}^{k} = \{ 2^{k-1} \leq g^{\star} \} \cap A_{\eps}^c.
\]
Note that $\Leb = \sigma(\cup_{k \geq 1} \beta_k \vee \xi_k )$ up to zero measure sets. 
Additionally, for all $k \geq 2$, $\beta_k \vee \xi_k$ are finite partitions and hence have finite entropy. Therefore, by the Kolmogorov-Sinai theorem (Theorem~\ref{thm:KolmogorovSinai})
\begin{equation}\label{eq:BoundEntropyGeneralApplicationOfKolmogorovSinaiInRuelle}
 h_{\mu}(T) = \lim_{k \to \infty} h_{\mu}(T, \beta_k \vee \xi_k).
\end{equation}
Now, fix an integer $k \geq 2$ and estimate $h_{\mu}(T, \beta_k \vee \xi_k)$. We define the function $\nu_k \colon \T^d \to \R$
\begin{equation}
 \nu_k (x) = \# \left\{ A \in \beta_k \vee \xi_k : \mu \Big( (\beta_k \vee \xi_k)(x) \cap T^{-1}A \Big) > 0 \right\}.
\end{equation}
{where, as above $\xi(x)$ denotes the element of the partition $\xi$ containing $x$.}
Then, by Lemma~\ref{lem:UsefulEntLemma}, we find
\begin{equation}\label{eq:BoundEntropyGeneralApplicationOfUsefulLemmaInRuelle}
 h_{\mu}(T, \beta_k \vee \xi_k) \leq \int_{\T^d} \log \nu_k \, d\mu.
\end{equation}
\\
\textbf{Claim: } Let $x \in \T^d$ be arbitrary. Then
\begin{enumerate}
 \item $(\beta_k \vee \xi_k)(x) \subseteq T^{-1} \left( B_{d2^{-k}} \Big( T(x) + \apD T(x) B_{d 2^{-k}} (0) \Big) \right) $ if $x \in A_{\eps}$; \label{item:ClaimInRuelleItem1}
 \medskip
  \item $(\beta_k \vee \xi_k)(x) \subseteq T^{-1}(\{ g^{\star} \leq 4 g^{\star}(x) \})$ if $g^{\star}(x) < 2^{k-1}$; \label{item:ClaimInRuelleItem2}
  \medskip
  \item $(\beta_k \vee \xi_k)(x) \subseteq T^{-1}( B_{d \e^{3g^{\star}(x)} 2^{-k}} (T(x)) )$ if $g^{\star}(x) < 2^{k-1}$. \label{item:ClaimInRuelleItem3}
\end{enumerate}

The proof of points \ref{item:ClaimInRuelleItem2} and \ref{item:ClaimInRuelleItem3} are proved in the same way as in the proof of the Claim in Theorem~\ref{thm:BoundEntropyGeneral}. Therefore, we only need to prove point \ref{item:ClaimInRuelleItem1}. Let $x \in A_{\eps}$ and $y \in (\beta_k \vee \xi_k)(x)$. Then
 \[
  |T(y) - T(x) - \apD T(x) (y-x)| < |x-y|
 \]
 so that $Ty \in B_{d 2^{-k}}(T(x) + \apD T(x) B_{d 2^{-k}}(0))$. This proves \ref{item:ClaimInRuelleItem1} and therefore we have proved the claim.

Using the first two claims, if $x \in A_{\eps}$ and $g^{\star}(x) < 2^{k-1}$ 
\begin{align*}
 \nu_k(x) &\leq\# \left\{ A \in \beta_k \vee \xi_k : \mu \Big( T^{-1} \left( B_{d2^{-k}} \Big( T(x) + \apD T(x) B_{d 2^{-k}} (0) \Big) \cap \{ g^{\star} \leq 4 g^{\star}(x) \} \cap A \right) \Big) > 0 \right\} \\
 &\quad \leq \# \left\{ A \in \beta_k \vee \xi_k : \mu \left( B_{d2^{-k}} \Big( T(x) + \apD T(x) B_{d 2^{-k}} (0) \Big) \cap \{ g^{\star} \leq 4 g^{\star}(x) \} \cap A \right) > 0 \right\}.
\end{align*}
We count how many $A \in \beta_k \vee \xi_k$ may have non-trivial intersection with $E\cap \{ g^{\star} \leq 4 g^{\star}(x) \},$ where $E:=B_{d2^{-k}} \Big( T(x) + \apD T(x) B_{d 2^{-k}} (0) \Big) $.
By definition, $\beta_k$ contains at most $1 + \log_2(4 g^{\star}(x) + 1)$ elements having non-trivial intersection with the set $\{ g^{\star} \leq 4 g^{\star}(x) \}$.
If an element of $\xi_k$ has non-trivial intersection with $E$, then this element is contained in the larger set $B_{d2^{-k}}(E)$.
Hence $\xi_k$ contains at most
$2^{k d}\Leb^d( B_{d2^{-k}}(E))$
such elements. A fact from linear algebra (see e.g. \cite[Lemma II.2.2]{QMXJSZS09}) implies $$
2^{k d}\Leb^d( B_{d2^{-k}}(E))=2^{kd} \Leb^d \left( B_{d2^{1-k}} \Big( T(x) + \apD T(x) B_{d 2^{-k}} (0) \Big) \right) \leq C(d) \prod_{i = 1}^d \max \{ \chi_i (\apD T(x)), 1 \}$$ 
where we recall that $\chi_i$ denotes the $i$-th singular value 
\begin{equation}
    \nu_k(x) \leq C(d) \prod_{i = 1}^d \max \{ \chi_i (\apD T(x)), 1 \} \cdot \Big( 1 + \log_2(4 g^{\star}(x) + 1) \Big) \leq C(d) \e^{\eps g^{\star}(x)} \prod_{i = 1}^d \max \{ \chi_i (\apD T(x)), 1 \}.
\end{equation}

If $x \not\in A_{\eps}$ and $g^{\star}(x) < 2^{k-1}$, then, as in the proof of Theorem~\ref{thm:BoundEntropyGeneral}, $\nu_k(x) \leq C(d) \e^{5d g^{\star}(x)}$. 
If $g^{\star}(x) \geq 2^{k-1}$, then as in the proof of Theorem~\ref{thm:BoundEntropyGeneral}, $\nu_k(x) \leq C(d) \e^{\eps g^{\star}(x)}$.
In conclusion,
\begin{equation}
 \nu_k(x) \leq C(d) \big(\mathbbm{1}_{A_{\eps}}(x) \e^{\eps g^{\star}(x)} \cdot \prod_{i = 1}^d \max \{ \chi_i (\apD T(x)), 1 \} +  \mathbbm{1}_{A_{\eps}^c}(x) \e^{5 d g^{\star}(x)} \big).
\end{equation}
We find 
\begin{align*}
 \int_{\T^d} \log \nu_k(x) \, d \mu(x) 
 &\leq\log(C(d)) + \int_{A_{\eps}} \left[ \eps g^{\star}(x) + \sum_{i = 1}^{k(x)} \log^+\Big(\chi_i(\apD T(x))\Big) \right] \, d \mu(x)  
 + 5 d \int_{\T^d \setminus A_{\eps}} g^{\star}(x) \, d \mu(x) \\
 &\leq \log(C(d)) + \eps \| g^{\star} \|_{L^1(\T^d, \mu)} + \int_{\T^d} \sum_{i = 1}^{k(x)} \log^+\Big(\chi_i(\apD T(x))\Big) d \mu(x) \\
 &\qquad + 5 d \mu(\T^d \setminus A_{\eps})^{1/p^{\prime}} \| g^{\star} \|_{L^p( \T^d, \mu )} \\
 &\leq \log(C(d)) + C(d)( \eps + 5d \eps^{1/p^{\prime}} ) \| g^{\star} \|_{L^p( \T^d, \mu )} + \int_{\T^d} \sum_{i = 1}^{k(x)} \log^+\Big(\chi_i(\apD T(x))\Big) d \mu(x).
\end{align*}
Hence,  by \eqref{eq:BoundEntropyGeneralApplicationOfKolmogorovSinaiInRuelle} and by \eqref{eq:BoundEntropyGeneralApplicationOfUsefulLemmaInRuelle}, we find
\begin{equation}
 h_{\mu}(T)  = \lim_{k \to \infty} h_{\mu}(T, \beta_k \vee \xi_k) \leq \log(C(d)) + C(d)( \eps + 5d \eps^{1/p^{\prime}} ) \| g^{\star} \|_{L^p( \T^d, \mu )} + \int_{\T^d} \sum_{i = 1}^{k(x)} \log^+\Big(\chi_i(\apD T(x))\Big) d \mu(x).
\end{equation}
Since $\eps > 0$ was arbitrary, we deduce
\begin{equation}\label{eq:AlmostRuelle}
 h_{\mu}(T) \leq \log(C(d)) + \int_{\T^d} \sum_{i = 1}^{k(x)} \log^+\Big(\chi_i(\apD T(x))\Big) d \mu(x).
\end{equation}
To finish the proof, we note that for all integers $j \geq 1$, $(\T^d, \Leb, \mu, T^j)$ is a measure-preserving system and satisfies Assumption~\ref{assump:LusinLip} with $g_j \colon \T^d \to \R$ given by
\begin{equation}
 g_j = \sum_{\ell = 0}^{j-1} g \circ T^{\ell}
\end{equation}
By \eqref{eq:AlmostRuelle}, we deduce
\begin{equation}\label{eq:AlmostRuelleAppliedToIterations}
 h_{\mu}(T^j) \leq \log(C(d)) + \int_{\T^d} \sum_{i = 1}^{k(x)} \log^+\Big(\chi_i(\apD (T^j)(x))\Big) d \mu(x).
\end{equation}
Noting that $h_{\mu}(T^j) = j h_{\mu}(T)$ and dividing \eqref{eq:AlmostRuelleAppliedToIterations} by $j$, we deduce
\begin{equation}
 h_{\mu}(T) \leq \dfrac{\log(C(d))}{j} + \int_{\T^d} \sum_{i = 1}^{k(x)} \frac{1}{j} \log^+\Big(\chi_i(\apD (T^j)(x))\Big) d \mu(x).
\end{equation}
Letting $j \to \infty$, we find
\begin{equation}
 h_{\mu}(T) \leq \int_{\T^d} \sum_{i = 1}^{k(x)} \lambda^{+}_i(x) m_i(x) \, d \mu(x).
\end{equation}
{This follows from Equation~\eqref{eq:SumOfPositiveLyapunovExpsAsLimit} by dominated convergence since $|\chi_i(\apD (T^j)(x))| \leq \e^{3 j g^{\star}(x)}$, by Proposition~\ref{prop:ApproxDiffPlusBoundPlusSmoothCovering}.}
\end{proof}

\section{Asymptotic Regularity and Bounds on Mixing}
\label{sec:mixingandreg}

In this section, we establish sharp asymptotic regularity estimates and bounds on mixing. 
We prove Theorems~\ref{thm:RLFAsymptoticRegularityThm} and~\ref{thm:RLFAsymptoticMixingThm}, which concern regular Lagrangian flows, as well as corresponding results for general measure-preserving systems with invariant measure equal to $\Leb^d$—see Theorems~\ref{thm:AsymptoticRegularityMPS} and~\ref{thm:AsymptoticMixingMPS} below.

Throughout this section, all measure-preserving systems are assumed to have invariant measure $\Leb^d$. Our regularity results are formulated using the homogeneous log-Sobolev norm $\| \cdot \|_{\dot{H}_{\log}}$ defined in Equation~\eqref{eq:HomogeneousLogSobolevNorm}.

\begin{theorem}\label{thm:AsymptoticRegularityMPS}
Let $(\T^d, \Borel, \Leb^d, T)$ be a measure-preserving system satisfying Assumption~\ref{assump:LusinLip} with some $p \in (1, \infty]$. 
 Let $\rho \in L^\infty\cap\BV(\T^d)$.
 Then
 \begin{equation}\label{eq:AsymptoticRegularityMPSMainEquation}
  \limsup_{n \to \infty} \dfrac{1}{n} \| \rho \circ T^n \|_{\dot{H}_{\log}}^2 \leq C(d) \| \rho \|_{L^\infty}^2\int_{\T^d} \lambda_{\max}(x) \, dx.
 \end{equation}
 
\end{theorem}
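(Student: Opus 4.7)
The plan is to split the $h$-integral defining $\|\rho\circ T^n\|^2_{\dot H_{\mathrm{log}}}$ at an $x$-adapted cutoff scale $r_n(x)\sim e^{-n\lambda_{\max}(x)}$ tuned to the pointwise top Lyapunov exponent, so that the logarithmic weight $|h|^{-d}$ integrated against the $L^\infty$ bound on the large-$|h|$ regime produces precisely the desired rate $n\int\lambda_{\max}\,d\mu$, while the small-$|h|$ regime is controlled by the BV regularity of $\rho$ and the uniform smallness of the image displacement under $T^n$ on a good set.

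Concretely, I would fix $\eps>0$ and, combining Theorem~\ref{thm:LyapunovFirst} applied to $T^n$ together with Lemma~\ref{lemma:UsefulLemmaAboutTheGradientBeingAGoodApprox} and Egorov's theorem on the Oseledets convergence $\tfrac{1}{n}\log|\apD T^n(x)|\to\lambda_{\max}(x)$, extract a measurable set $G_n\subseteq\T^d$ with $\Leb^d(\T^d\setminus G_n)<\eps$ and a scale $\delta_n>0$ such that
\[
|T^n(x+h)-T^n(x)| \leq e^{n(\lambda_{\max}(x)+\eps)}\,|h| \qquad \text{for all } x\in G_n,\ |h|\leq\delta_n.
\]
Setting $r_n(x):=\min\bigl(\delta_n,\,e^{-n(\lambda_{\max}(x)+2\eps)}\bigr)$, the displacement satisfies $|T^n(x+h)-T^n(x)|\leq e^{-n\eps}$ whenever $x\in G_n$ and $|h|\leq r_n(x)$. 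For the large-scale contribution $|h|\in[r_n(x),1/5]$, the trivial bound $|\rho\circ T^n(x+h)-\rho\circ T^n(x)|^2\leq 4\|\rho\|_\infty^2$ combined with $\int_{B_{1/5}\setminus B_r}|h|^{-d}\,dh\leq C(d)\log(1/r)$ yields
\[
\int_{G_n}\int_{B_{1/5}\setminus B_{r_n(x)}} \frac{4\|\rho\|_\infty^2}{|h|^d}\,dh\,dx \leq C(d)\,\|\rho\|_\infty^2 \, n\,\Bigl(\int_{\T^d}\lambda_{\max}\,d\mu + 2\eps\Bigr),
\]
and, upon dividing by $n$, taking $\limsup_n$, and letting $\eps\to 0$, this produces exactly the bound in \eqref{eq:AsymptoticRegularityMPSMainEquation}.

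The small-scale contribution $|h|\leq r_n(x)$ on $G_n$ is expected to be $o(n)$: after the elementary inequality $|a-b|^2\leq 2\|\rho\|_\infty|a-b|$ and a swap of the order of integration, the inner $L^1$-integral in $x$ is controlled by the BV translation estimate $\int|\rho(y+w)-\rho(y)|\,dy\leq|w|\,|D\rho|(\T^d)$, applied through the change of variable $y=T^n(x)$ at the uniformly small scale $|w|\sim e^{-n\eps}$. The bad-set contribution $\T^d\setminus G_n$ is absorbed using the discrete analogue of the one-step $\dot H_{\mathrm{log}}$-propagation estimate \eqref{eq:standardpropagation} applied to $T^n$ together with the smallness $\Leb^d(G_n^c)<\eps$. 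The main technical obstacle is rigorously carrying out the small-scale estimate under only BV regularity of $\rho$: the direct translation bound acquires a Jacobian factor for the induced near-identity map $\psi_h(y):=T^n(T^{-n}(y)+h)$, whose control is delicate since $\apD T^n$ may be highly irregular and $\psi_h$ is only measure-preserving rather than Lipschitz. An alternative route that sidesteps this difficulty is to decompose $\rho$ into its superlevel sets $\{\rho>t\}$ by coarea, apply the perimeter-translation estimate $|T^{-n}(E)\triangle(T^{-n}(E)-h)|\leq|h|\,\mathrm{per}(T^{-n}(E))$ to each indicator, and optimize the cutoff scale, combining the contributions via Minkowski's inequality and Cauchy-Schwarz over the level parameter.
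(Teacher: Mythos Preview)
Your scale-splitting idea is morally right, but there are two genuine gaps that prevent the argument from closing.

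First, applying Lemma~\ref{lemma:UsefulLemmaAboutTheGradientBeingAGoodApprox} directly to $T^n$ yields a scale $\delta_n>0$ over which you have no quantitative control: the proof of that lemma rests on Rademacher-type pointwise differentiability, and nothing prevents $\delta_n$ from decaying, say, doubly exponentially in $n$. Your large-scale term then carries an additive $\log(1/\delta_n)$ (from $r_n(x)=\min(\delta_n,e^{-n(\lambda_{\max}(x)+2\eps)})$), and after dividing by $n$ there is no reason for $\tfrac{1}{n}\log(1/\delta_n)$ to vanish. Freezing a large power $T^N$ and writing $n=mN+k$ does not fix this: to pass from one application of $T^N$ to $m$ applications you need both $T^{jN}(x)$ and $T^{jN}(x+h)$ to stay in the good set for every $j$, which forces you to track the bad-step contributions explicitly. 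Relatedly, your bound on $G_n$ requires $x+h\in G_n$ as well as $x\in G_n$ (Lemma~\ref{lemma:UsefulLemmaAboutTheGradientBeingAGoodApprox} is a two-point statement), which you have not arranged.

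Second, the small-scale piece is not resolved, as you acknowledge. The change of variable $y=T^n(x)$ replaces the shift by $h$ with the map $\psi_h(y)=T^n(T^{-n}(y)+h)$, which is measure-preserving but for which no Lipschitz or BV-translation control is available under the hypotheses. The coarea alternative requires bounding $\mathrm{per}(T^{-n}\{\rho>t\})$, which in general grows exponentially in $n$; even if one tries to optimise the split per level set, this only works under an Assumption~\ref{assump:LusinLip}-type hypothesis on $T^{-1}$, which the theorem does not assume.

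The paper's proof (Proposition~\ref{prop:KeyProposition}) avoids both issues at once by applying Lemma~\ref{lemma:UsefulLemmaAboutTheGradientBeingAGoodApprox} only to the single map $T$, so that the resulting good set $A$ and scale $\delta$ are fixed independently of $n$. It then iterates by hand via a two-point weight
\[
\Theta(x,y)=
\begin{cases}
\eps+|\apD T(x)| & x,y\in A,\\
e^{g(x)+g(y)} & \text{otherwise},
\end{cases}
\qquad
\Theta_n(x,y)=\prod_{j=0}^{n-1}\Theta(T^jx,T^jy),
\]
and obtains a \emph{global} Lusin--Lipschitz inequality $\metric(T^n x,T^n y)\le e^{\ell_n(x)+\ell_n(y)}\metric(x,y)$ with $\|\ell_n\|_{L^1}$ explicitly estimated (the bad-step terms are small by H\"older since $\Leb^d(A^c)<\eps$ and $g\in L^p$, $p>1$). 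Feeding this into Proposition~\ref{prop:LusinLipBoundGivesBoungOnIntegral} converts directly to an $\dot H_{\log}$ bound on $\rho\circ T^n$, so no separate small-scale BV argument is needed. The passage from $\int\log|\apD T|$ to $\int\lambda_{\max}$ is done a posteriori by first replacing $T$ with a fixed high power $T^N$.
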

This regularity result implies the following lower bound on mixing efficency. This can be expressed in terms of negative Sobolev norms \eqref{eq:H-1} and in terms of the {\it geometric mixing scale} defined as follows: 
\begin{equation*}
 \mix_{\kappa}(\rho) = \inf \left\{ \eps > 0 : \left| \int_{B_r(x)} \rho \, dz \right| \leq \kappa \| \rho \|_{L^{\infty}} \quad \forall x \in \T^d , \, \forall r > \eps \right\}.
\end{equation*}
where $\kappa \in (0,1)$ is a fixed parameter and $\rho \in L^{\infty}(\T^d)$ a mean-free function.

\begin{theorem}\label{thm:AsymptoticMixingMPS}
Let $(\T^d, \Borel, \Leb^d, T)$ be a measure-preserving system satisfying Assumption~\ref{assump:LusinLip} with some $p \in (1, \infty]$ and $g \in L^p(\T^d)$. 
 Let $\rho \in L^\infty\cap\BV(\T^d)$ be mean free, i.e., $\int_{\T^d}\rho  =0$.
 Then
 \begin{align}
  \liminf_{n \to \infty} \dfrac{1}{n} \log \| \rho \circ T^n \|_{\dot{H}^{-1}(\T^d)} &\geq - C(d) \left( \dfrac{\| \rho \|_{L^{\infty}(\T^d)}}{\| \rho \|_{L^{2}(\T^d)}} \right)^2 \int_{\T^d} \lambda_{\max}(x) \, dx; \label{eq:FunctionalMixingInThmForMaps} \\
  \liminf_{n \to \infty} \dfrac{1}{n} \log \mix_{\kappa} \Big(  \rho \circ T^n \Big) &\geq -C(d, \kappa) \int_{\T^d} \lambda_{\max}(x) \, dx \quad \text{ if $\rho \in \{ -1, 1\}$ a.e.} \label{eq:GeometricMixingInThmForMaps}
 \end{align}
\end{theorem}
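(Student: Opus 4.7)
The plan is to derive both lower bounds from the asymptotic regularity estimate in Theorem~\ref{thm:AsymptoticRegularityMPS} via functional interpolation inequalities that are independent of the dynamics. Throughout, $T$ preserves $\Leb^d$, so $\|\rho \circ T^n\|_{L^p(\T^d)} = \|\rho\|_{L^p(\T^d)}$ for every $p \in [1,\infty]$ and every $n$; only the $\dot H_{\log}$ seminorm of $\rho \circ T^n$ varies with $n$.

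For part \eqref{eq:FunctionalMixingInThmForMaps}, I would first prove the Fourier interpolation
\begin{equation}\label{eq:LogSobolevInterp}
\log \frac{\|f\|_{L^2(\T^d)}}{\|f\|_{\dot H^{-1}(\T^d)}} \;\leq\; C(d) + \frac{C(d)}{\|f\|_{L^2(\T^d)}^2}\, \|f\|_{\dot H_{\log}(\T^d)}^2
\end{equation}
for every mean-free $f \in L^2(\T^d)$. This follows from a frequency cutoff: using $\|f\|_{\dot H_{\log}}^2 \asymp \sum_k \log(2+|k|)|\hat f(k)|^2$, the frequencies $0 < |k| \leq N$ contribute at most $C N^2 \|f\|_{\dot H^{-1}}^2$ to $\|f\|_{L^2}^2$, while the frequencies $|k| > N$ contribute at most $\|f\|_{\dot H_{\log}}^2/\log(2+N)$. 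Optimizing in $N$ yields \eqref{eq:LogSobolevInterp}. Applying it with $f = \rho \circ T^n$, dividing by $n$, passing to $\liminf$, and invoking Theorem~\ref{thm:AsymptoticRegularityMPS} gives \eqref{eq:FunctionalMixingInThmForMaps}, since the additive constant and the term $\tfrac{1}{n}\log\|\rho\|_{L^2}$ vanish in the limit.

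For part \eqref{eq:GeometricMixingInThmForMaps}, I would prove an analogous interpolation adapted to the geometric mixing scale: for every mean-free $\rho \in \{-1,1\}$ a.e.,
\begin{equation*}
\log(1/\mix_\kappa(\rho)) \leq C(d,\kappa) \|\rho\|_{\dot H_{\log}(\T^d)}^2 + C(d,\kappa).
\end{equation*}
The idea is that when $r > \mix_\kappa(\rho)$, the mollification $\rho * \eta_r$ has $L^\infty$ norm at most $\kappa$, which on the Fourier side forces $\sum_{|k|\lesssim 1/r}|\hat\rho(k)|^2 \lesssim \kappa^2$. Since $\|\rho\|_{L^2}^2 = 1$ for $\pm 1$ functions, most of the Plancherel mass sits at frequencies $|k| \gtrsim 1/\mix_\kappa(\rho)$, producing a lower bound on $\|\rho\|_{\dot H_{\log}}^2$ of order $\log(1/\mix_\kappa(\rho))$. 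Applying this to $\rho \circ T^n$, dividing by $n$, and combining with Theorem~\ref{thm:AsymptoticRegularityMPS} yields \eqref{eq:GeometricMixingInThmForMaps}.

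The main substance of the theorem is already contained in Theorem~\ref{thm:AsymptoticRegularityMPS}, which transfers Lyapunov exponent information into log-Sobolev propagation. The interpolation inequalities above are classical Fourier-analytic tools; the only mild subtlety is confirming that all additive constants are irrelevant once divided by $n$ and passed to the limit.
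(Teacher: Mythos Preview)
Your proposal is correct and follows the same overall architecture as the paper: both inequalities are deduced from Theorem~\ref{thm:AsymptoticRegularityMPS} via an interpolation inequality that converts control on $\|\cdot\|_{\dot H_{\log}}$ into control on $\|\cdot\|_{\dot H^{-1}}$ (respectively on $\mix_\kappa$), followed by division by $n$ and passage to the limit. For the $\dot H^{-1}$ part, your Fourier interpolation \eqref{eq:LogSobolevInterp} is exactly the content of the paper's Lemma~\ref{lemma:MixingBoundByIntegral} (which the paper quotes from \cite{EBQHN21}), so here the two proofs coincide.

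The one genuine difference is in the geometric mixing part. The paper (Lemma~\ref{lemma:GeometricMixingScaleLogRegularity}) works entirely in physical space: from $\mix_\kappa(\rho)=\eps$ one gets $\fint_{B_r(x)}|\rho(z)-\rho(x)|\,dz\ge 1-\kappa$ for all $r>\eps$, integrates $r^{-1}$ over $[\eps,1/5]$ to produce a $\log(1/\eps)$ factor, and then uses the two-valued identity $|\rho(z)-\rho(x)|^2=2|\rho(z)-\rho(x)|$ to turn the resulting first-order integral into the $\dot H_{\log}$ seminorm. Your route is Fourier-side: the mollification bound $\|\rho*\eta_r\|_{L^\infty}\le\kappa$ forces the Plancherel mass of $\rho$ to concentrate at frequencies $|k|\gtrsim 1/\mix_\kappa(\rho)$, hence $\|\rho\|_{\dot H_{\log}}^2\gtrsim\log(1/\mix_\kappa(\rho))$. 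Both arguments are short and yield the same inequality with $\kappa$-dependent constants; the paper's version makes the role of the constraint $\rho\in\{-1,1\}$ more transparent (it is precisely what converts the $L^1$-type integral into the squared $\dot H_{\log}$ norm), while your version requires a small care in choosing the low-frequency cutoff so that the estimate $\sum_{|k|\lesssim 1/r}|\hat\rho(k)|^2\lesssim\kappa^2$ leaves strictly positive mass at high frequencies for every $\kappa\in(0,1)$.
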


For the sake of illustration, we outline a simplified argument for Theorem~\ref{thm:RLFAsymptoticRegularityThm} under the stronger assumption that $b \in C^{2}(\mathbb{T}^d; \mathbb{R}^d)$ is autonomous and $\rho_{\initial} \in C^1(\mathbb{T}^d)$. We then discuss the  modifications required to handle the general case.

 Remember that the solution to \eqref{eq:TransportEquation} is given by $\rho(t,x) = \rho_{\initial} \circ X_{0,t}(x)$. Arguing as in \cite[Equation (2.2)]{SBNDN22}, we estimate the exponential separation of trajectories as
 \begin{align*}
  &|X_{0,t}(y) - X_{0,t}(x) - \nabla X_{0,t}(x) (y-x)| \\
  &\quad \leq \e^{\int_0^t |\nabla b(X_{0,s}(x))| \, ds} \int_0^t |b(X_{0,s}(y)) - b(X_{0,s}(x)) - \nabla b (X_{0,s}(x))(X_{0,s}(y)-X_{0,s}(x))| \, ds \\
  &\quad \leq \e^{\int_0^t |\nabla b(X_{0,s}(x))| \, ds} \| b \|_{C^{2}} \int_0^t |X_{0,s}(y) - X_{0,s}(x)|^{2} \, ds
 \end{align*}
 Using the gradient estimate
 \begin{equation}\label{eq:gradient_est}
    |\nabla X_{0,t}(x)| \leq \e^{\int_0^t |\nabla b(X_{0,s}(x))| \, ds}, 
 \end{equation}
we deduce
  \begin{equation}\label{eq:BoundOneInC2Proof}
  |X_{0,t}(y) - X_{0,t}(x)| \leq \e^{\int_0^t |\nabla b(X_{0,s}(x))| \, ds} \left( |y-x| + \| b \|_{C^{2}} \int_0^t |X_{0,s}(y) - X_{0,s}(x)|^{2} \, ds \right).
 \end{equation}
 From this, by taking the square on both side, we get
 \[
  \dfrac{d}{dt} \left[ \left(  |y-x| + \| b \|_{C^{2}} \int_0^t |X_{0,s}(y) - X_{0,s}(x)|^{2} \, ds \right)^{- 1} \right] \geq - \| b \|_{C^{2}} \e^{2 \int_0^t |\nabla b(X_{0,s}(x))| \, ds}.
 \]
 Integrating in an interval $[0,t]$ yields
 \[
  \int_0^t |X_{0,s}(y) - X_{0,s}(x)|^{2} \, ds \leq \dfrac{|y-x|}{\| b \|_{C^{2}}} \left( \dfrac{1}{1 - |y-x| \| b \|_{C^{2}} \e^{2 \int_0^T |\nabla b(X_{0,s}(x))| \, ds}} \right) \leq \dfrac{2 |y-x|}{\| b \|_{C^{2}}} 
 \]
 provided 
\begin{equation}\label{eq:ass_heur}
     |y-x| \| b \|_{C^{2}} \e^{2 \int_0^t |\nabla b(X_{0,s}(x))| \, ds} \leq 1/2.
 \end{equation}
 
  Therefore, under \eqref{eq:ass_heur}, by plugging the inequality above into Equation \eqref{eq:BoundOneInC2Proof}, we obtain
 \[
  |X_{0,t}(y) - X_{0,t}(x)| \leq 3 |y-x| \e^{\int_0^t |\nabla b(X_{0,s}(x))| \, ds}.
 \]
 On the other hand, if \eqref{eq:ass_heur} fails, then $|y-x| \| b \|_{C^{2}} \e^{2 \int_0^t |\nabla b(X_{0,s}(x))| \, ds} \geq 1/2$, therefore
 \[
  |X_{0,t}(y) - X_{0,t}(x)| \leq d \leq C(d) |y-x| \| b \|_{C^{2}} \e^{2 \int_0^t |\nabla b(X_{0,s}(x))| \, ds}.
 \]
 Hence, for all $x , y \in \T^d$ and $t\ge 0$ we have
 \[
  |X_{0,t}(y) - X_{0,t}(x)| \leq C(d) |y-x| (\| b \|_{C^{2}} + 1) \e^{2 \int_0^t |\nabla b(X_{0,s}(x))| \, ds},
 \]
 which implies
 \[
  |\rho(t,x) - \rho(t,y)|  
  \leq C(d) |y-x| \| \nabla \rho_{\initial} \|_{L^{\infty}(\T^d)} (\| b \|_{C^{2}} + 1) \e^{2 \int_0^t |\nabla b(X_{0,s}(x))| \, ds}.
 \]
 Due to Proposition~\ref{prop:LusinLipBoundGivesBoungOnIntegral}, we deduce
 \[
  \limsup_{t \to \infty} \dfrac{1}{t} \| \rho(t,\cdot) \|_{\dot H_{\mathrm{log}}}^2 \leq C(d) \fint_0^1 \int_{\T^d} |\nabla b| \, dx \, dt,
 \]
 a version of Theorem~\ref{thm:RLFAsymptoticRegularityThm}. 
 
 \medskip
 
 The key takeaway from this sketch of proof is that the exponential separation of Lagrangian trajectories can be controlled by the gradient of the flow, provided the velocity field is sufficiently regular. In our sketch, we assumed $C^2$ regularity for simplicity, but in fact, any Holder bound on $\nabla b$ would suffice.
Since the flow satisfies the gradient estimate \eqref{eq:gradient_est}, we deduce that, asymptotically, trajectory separation is governed by the integral $\int_0^t |\nabla b(X_{0,s}(x))|\, ds$, which yields the correct bound. A more refined analysis would replace this with the top Lyapunov exponent, which is the true quantity controlling exponential growth of gradients. This observation is consistent with the statements of Theorems~\ref{thm:RLFAsymptoticRegularityThm} and~\ref{thm:AsymptoticRegularityMPS}.

However, this sketch does not clarify how to weaken the regularity assumption on the velocity field to first-order, as some degree of differentiability for the velocity gradient appears essential to rigorously approximate trajectory separation in terms of the flow gradient. In this regard, the key step is Proposition~\ref{prop:KeyProposition} below whose proof exploits the following two points:
 \begin{itemize}
    \item We leverage weak differentiability to refine the approximate Lipschitz bound in \eqref{eq:LusinLipschitzInequalityInAssumption}. Specifically, up to removing a small exceptional set, Lemma~\ref{lemma:UsefulLemmaAboutTheGradientBeingAGoodApprox} implies that at small scales,
\begin{equation}\label{EstimateByApproximateGradentInIdeas}
	\metric(T(y), T(x)) \leq (\eps + |\apD T(x)|)\, \metric(y, x),
\end{equation}
where $\apD T(x)$ denotes the approximate differential of $T$ at $x$.

    \item Depending on the number of iterations $n$ and each couple of points $(x,y)$, we determine an appropriate scale $r_n(x,y)$.
    Below scale $r_n(x,y)$, we will be able to exploit \eqref{EstimateByApproximateGradentInIdeas} up until the $n$-iteration to estimate $\metric(T^n (y), T^n (x))$. Above scale $r_n(x,y)$, the straightforward estimate $\metric(T^n (y), T^n (x)) \leq d$ is sufficient.
\end{itemize}

\begin{proposition}\label{prop:KeyProposition}
Let $(\T^d, \Borel, \Leb^d, T)$ be a measure-preserving system satisfying Assumption~\ref{assump:LusinLip} with some $p \in (1, \infty]$ and $g \in L^p(\T^d)$ and $(\T^d, \Borel, \Leb^d, S)$ satisfying Assumption~\ref{assump:LusinLip} with $\tilde{g} \in L^1(\T^d)$. 
Let $\eps > 0$ be arbitrary. 
Then there exists a $c = c(\eps, T)$ such that for all $\rho \in  L^{\infty}\cap \BV(\T^d)$ with $\| \rho \|_{L^{\infty}(\T^d)} = 1$ we have
 \begin{align}
 \begin{split}\label{eq:AsymptoticMixingThmPropositionEquation}
  \| \rho \circ T^n \circ S\|_{\dot{H}_{\log}}^2 &\leq C(d) \Bigg( c(\eps, T) + n\eps + n \int_{\T^d} \log|\apD T(x)| \, dx \\
  &\qquad + \eps^{\frac{p-1}{p}} n \left( \| g \|_{L^p(\T^d)}^p + 1 \right) + \| \tilde{g} \|_{L^1(\T^d)} + \| \rho \|_{\BV(\T^d)} \Bigg).
 \end{split}
 \end{align}
\end{proposition}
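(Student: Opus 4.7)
Set $F_n:=T^n\circ S$, which by Proposition~\ref{prop:ChainRuleForMPS} is measure-preserving and a.e.\ approximately differentiable with $\apD F_n(x)=\apD T^n(S(x))\,\apD S(x)$. Since $\|\rho\|_{L^\infty}=1$ gives $|\rho(a)-\rho(b)|^2\le 2|\rho(a)-\rho(b)|$, it suffices to bound
\[
J := \int_{\T^d}\int_{|h|<1/5}\frac{|\rho(F_n(x+h))-\rho(F_n(x))|}{|h|^d}\,dh\,dx.
\]
The approach is a scale decomposition: for each $x$, I split the $h$-integral at a pointwise scale $r_n(x)$ below which Lemma~\ref{lemma:UsefulLemmaAboutTheGradientBeingAGoodApprox} applied iteratively to $T$ with parameter $\eps$ yields the Lusin--Lipschitz-type bound
\[
|F_n(x+h)-F_n(x)|\le L_n(x)\,|h|,\qquad L_n(x):=e^{\tilde g^\star(x)}\prod_{i=0}^{n-1}\bigl(\eps+|\apD T(T^iS(x))|\bigr),
\]
with $\tilde g^\star$ the maximal ergodic function of $\tilde g$. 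The set where this holds is the iterated good set $\bigcap_{i=0}^{n-1}(T^iS)^{-1}(A_\eps)$, with $A_\eps$ and the threshold $\delta(\eps,T)>0$ provided by Lemma~\ref{lemma:UsefulLemmaAboutTheGradientBeingAGoodApprox}; the maximal ergodic functions $g^\star,\tilde g^\star$ from Subsection~\ref{subsec:MaxErgFct} quantify how often orbits land in the bad set and set the admissible $|h|$-range.

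For $J_{\mathrm{large}}$ ($|h|\ge r_n(x)$), the trivial $L^\infty$ estimate combined with $\int_{r_n(x)<|h|<1/5}|h|^{-d}\,dh\lesssim \log(1/r_n(x))$ gives a contribution bounded by $\int_{\T^d}\log(1/r_n(x))\,dx$. Taking logs of the defining inequality for $r_n$, the integrand splits into: a deterministic $\log(1/\delta(\eps,T))$ absorbed into $c(\eps,T)$; the term $\tilde g^\star(x)$, whose integral is $\lesssim\|\tilde g\|_{L^1}$ by Theorem~\ref{thm:MaxErgThm}; and $\sum_{i=0}^{n-1}\log^+(\eps+|\apD T(T^iS(x))|)$. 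By $T$- and $S$-invariance of $\Leb^d$, each summand integrates to $\int\log^+(\eps+|\apD T|)\,dx$; using measure-preservation $|\apD T|\ge|\det\apD T|^{1/d}=1$ a.e., the estimate $\log(\eps+a)\le\log a+\eps$ on $\{a\ge 1\}$ produces $n\int\log|\apD T|\,dx+n\eps$. The residual contribution from the bad set (where the iteration fails) is handled by the crude bound of Assumption~\ref{assump:LusinLip} together with a H\"older/truncation argument on $g^\star$, using the $L^p$-control of Corollary~\ref{cor:MaxErgFct} and the upper bound $|\apD T|\le e^{3g}$ from Proposition~\ref{prop:ApproxDiffPlusBoundPlusSmoothCovering}; optimizing the truncation level yields the term $\eps^{(p-1)/p}n(\|g\|_{L^p}^p+1)$. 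For $J_{\mathrm{small}}$, the Lipschitz bound $|F_n(x+h)-F_n(x)|\le L_n(x)|h|$ and the measure-preservation of $F_n$ allow a change of variable $y=F_n(x)$ on the good set; after polar coordinates the singular factor $|h|^{-d}$ combines with the Jacobian $|h|^{d-1}$, and the BV slicing $\int|\rho(y+v)-\rho(y)|\,dy\le|v|\|\rho\|_{\BV}$ together with the compatibility $L_n(x)\,r_n(x)\lesssim 1$ built into $r_n$ produces $J_{\mathrm{small}}\lesssim C(d)\|\rho\|_{\BV}$.

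The chief obstacle is $J_{\mathrm{small}}$: the displacement $v(y,h)=F_n(F_n^{-1}(y)+h)-y$ genuinely depends on $y$, so the BV slicing inequality cannot be applied with a fixed shift. I would resolve this either by a layer-cake decomposition of $\rho$ along its level sets, or by freezing $y$ inside small cells on which $F_n$ is nearly affine by Lemma~\ref{lemma:UsefulLemmaAboutTheGradientBeingAGoodApprox} and performing the BV bound cell by cell. The finitely many exceptional cells are absorbed using the crude Lusin--Lipschitz bound of Assumption~\ref{assump:LusinLip} combined with $L^p$-control of $g^\star$, in the spirit of the proof of Theorem~\ref{thm:BoundEntropyGeneral}. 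Assembling the two contributions yields \eqref{eq:AsymptoticMixingThmPropositionEquation}.
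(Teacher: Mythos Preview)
Your outline has the right scale-splitting intuition, but the treatment of $J_{\mathrm{small}}$ is a genuine gap, and the fixes you suggest (layer-cake, cell-freezing) are neither worked out nor obviously viable. The paper's proof avoids this difficulty entirely by a device you do not invoke: the Lusin--Lipschitz representation of BV functions. Since $\rho\in\BV(\T^d)$, there is $h\in L^1(\T^d)$ with $\|h\|_{L^1}\le C(d)\|\rho\|_{\BV}+1$ and $|\rho(x)-\rho(y)|\le e^{h(x)+h(y)}\metric(x,y)$. Once one has a Lusin--Lipschitz bound $\metric(T^n(x),T^n(y))\le e^{\ell_n(x)+\ell_n(y)}\metric(x,y)$ for the iterates (this is the claim established in the paper via a two-point function $\Theta_n(x,y)$ and a Young-inequality symmetrisation), composition gives a Lusin--Lipschitz bound on $\rho\circ T^n\circ S$ with exponent $h\circ T^n\circ S+\ell_n\circ S+\tilde g$, and then Proposition~\ref{prop:LusinLipBoundGivesBoungOnIntegral} converts this directly to the $\dot H_{\log}$ estimate. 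There is no need for a BV slicing argument or a change of variables under $F_n$; the $\|\rho\|_{\BV}$ term arises simply as $\|h\|_{L^1}$.

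A second, smaller issue: you write that $\int\tilde g^\star\lesssim\|\tilde g\|_{L^1}$ by the maximal ergodic theorem, but Theorem~\ref{thm:MaxErgThm} only gives weak-$L^1$, and $\tilde g$ is assumed merely in $L^1$, so $\tilde g^\star$ need not be integrable. The paper avoids this by never forming $\tilde g^\star$: since $S$ is applied once, the raw Lusin--Lipschitz inequality $\metric(S(x),S(y))\le e^{\tilde g(x)+\tilde g(y)}\metric(x,y)$ is composed directly with the bound for $T^n$, and $\|\tilde g\|_{L^1}$ enters via Proposition~\ref{prop:LusinLipBoundGivesBoungOnIntegral}. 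Relatedly, your one-point Lipschitz constant $L_n(x)$ does not follow from Assumption~\ref{assump:LusinLip}, which is intrinsically two-point; the paper works with the two-point product $\Theta_n(x,y)$ and only at the end bounds $\log\Theta_n(x,y)\le \ell_n(x)+\ell_n(y)$ via Young's inequality, which is what makes the bad-set contribution come out as $\eps^{(p-1)/p}n(\|g\|_{L^p}^p+1)$.
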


This is the outline of this section. In Subsection~\ref{subsec:AsympRegularityGeneralAsymptoticMixingThmProof}, we prove Theorem~\ref{thm:AsymptoticRegularityMPS}. 
In Subsection~\ref{subsec:ProofRLFAsymptoticRegularityThm}, we prove Theorem~\ref{thm:RLFAsymptoticRegularityThm}.
As a corollary of Theorem~\ref{thm:AsymptoticRegularityMPS}, we prove Theorem~\ref{thm:AsymptoticMixingMPS} and Theorem~\ref{thm:RLFAsymptoticMixingThm} in Subsection~\ref{subsec:AsympMixingGeneralAsymptoticMixingThmProof}.  
 Both Theorem~\ref{thm:AsymptoticRegularityMPS} and ~\ref{thm:RLFAsymptoticRegularityThm} are based on the Key Proposition~\ref{prop:KeyProposition} which we prove in the final Subsection~\ref{subsec:AsympMixingAsymptoticMixingThmPropositionProof}.  

\subsection{Proof of Theorem~\ref{thm:AsymptoticRegularityMPS}}\label{subsec:AsympRegularityGeneralAsymptoticMixingThmProof}
Without loss of generality, we may assume $\| \rho \|_{L^{\infty}(\T^d)} = 1$.
Let $\eps > 0$ be arbitrary. Let $N$ be sufficiently large so that (which exists by Proposition~\ref{prop:ChainRuleForMPS} and Kingman's sub-additive ergodic theorem \cite[Theorem 3.3.3]{MV16}, or a dominated convergence relying on $g^{\star}$ together with Proposition~\ref{prop:ApproxDiffPlusBoundPlusSmoothCovering})
\begin{equation}\label{eq:ChoiceOfNCloseToLyapunov}
\frac{1}{N} \int_{\T^d} \log |\apD (T^N)| \, dx \leq \int_{\T^d} \lambda_{\max}(x) \, dx + \eps.
\end{equation}
Now let $g_N \in L^1(\T^d)$ be $g_N = \sum_{j = 0}^{N-1} g \circ T^j$ and note that for any $0 \leq k \leq N$, we have 
$$\metric(T^{k}(x) , T^k(y)) \leq (g_N(x) + g_N(y))\metric(x,y) \quad \forall x,y \in \T^d.$$
Now let $n \geq 1$ be an arbitrary integer and note that $n = \lfloor n / N \rfloor N + k$ for some $0 \leq k \leq N-1$.
By Proposition~\ref{prop:KeyProposition}, with $S = T^k$ 
 \begin{align}
 \begin{split}
  \| \rho \circ T^n \|_{\dot{H}_{\log}}^2 &= \| \rho \circ (T^N)^{\lfloor n / N \rfloor} \circ T^k \|_{\dot{H}_{\log}}^2 \leq C(d) \Bigg( c(\eps, T) + \lfloor n / N \rfloor \eps + \lfloor n / N \rfloor \int_{\T^d} \log|\apD (T^N)(x)| \, dx \\
  &\qquad + \eps^{\frac{p-1}{p}} \lfloor n / N \rfloor \left( \| g_N \|_{L^p(\T^d)}^p + 1 \right) + \| g_N \|_{L^1(\T^d)} + \| \rho \|_{\BV(\T^d)} \Bigg).
 \end{split}
 \end{align}
 Dividing by $n$ and taking the $\limsup$ as $n \to \infty$, the two last addends disappear. Invoking \eqref{eq:ChoiceOfNCloseToLyapunov} yields
  \begin{align}
 \begin{split}
  \limsup_{n \to \infty} & \frac{1}{n} \| \rho \circ T^n \|_{\dot{H}_{\log}}^2
  \leq C(d) \Bigg( 2 \eps + \int_{\T^d} \lambda_{\max}(x) \, dx + \eps^{\frac{p-1}{p}} \left( N^{p-1} \| g \|_{L^p(\T^d)}^p + 1 \right) \Bigg).
 \end{split}
 \end{align}
 Since $\eps > 0$ is arbitrary, the proof is concluded.

\subsection{Proof of Theorem~\ref{thm:RLFAsymptoticRegularityThm}}\label{subsec:ProofRLFAsymptoticRegularityThm}
Let $b \in L^1_{\loc}([0, \infty); W^{1,p}(\T^d; \R^d))$ with $p > 1$ be $1$-periodic in time as in the statement of the theorem. 
The solution $\rho_t$ of the transport equation is obtained as $\rho_t = (X_{t})_{\#} \rho_{\initial} = \rho_{\initial} \circ X_{t}^{-1}$ where $X_{t}$ is the RLF at time $t$. Since $b$ is $1$-periodic in time
\begin{equation}
 X_{t,0} = X_{t, \lfloor t \rfloor} \circ X_{\lfloor t \rfloor, 0} =  X_{t - \lfloor t \rfloor, 0} \circ (X_{1})^{\lfloor t \rfloor}
\end{equation}
and therefore
\begin{equation}
 \rho_t = \rho_{\initial} \circ (X_{1, 0}^{-1})^{\lfloor t \rfloor} \circ X_{t - \lfloor t \rfloor, 0}^{-1}.
\end{equation}
By Lemma~\ref{lemma:LusinLipFlows}, there exists $\tilde{g} \in L^p(\T^d)$ with $\| \tilde{g} \|_{L^p(\T^d)} \leq C(d,p) \| \nabla b \|_{L^1([0,1]; L^p(\T^d; \R^{d \times d}))}$ such that
\begin{equation}
 \metric(X_{t,0}^{-1}(x) , X_{t,0}^{-1}(y)) \leq \e^{\tilde{g}(x) + \tilde{g}(y)} \metric(x,y) \quad \forall x,y \in \T^d, \, \forall t \in [0,1].
\end{equation}
Since $(\T^d; \Borel, \Leb^d, X_{1}^{-1})$ satisfies Assumption~\ref{assump:LusinLip}, we may apply Proposition~\ref{prop:KeyProposition} with $T = X_{1}^{-1}$ and $S = X_{t - \lfloor t \rfloor, 0}^{-1}$ to deduce that for all $\eps > 0$
 \begin{align}
 \begin{split}
  \| \rho_t \|_{\dot{H}_{\log}}^2 &\leq C(d) \Bigg( c(\eps, b) + \left\lfloor t \right\rfloor \int_{\T^d} \log(\eps + |\apD (X_{1}^{-1})(x)|) \, dx \\
  &\qquad + \eps^{\frac{p-1}{p}} \left\lfloor t  \right\rfloor \left( \| g \|_{L^p(\T^d)}^p + 1 \right) + \| \tilde{g} \|_{L^1(\T^d)} + \| \rho_{\initial} \|_{\BV(\T^d)} \Bigg).
 \end{split}
 \end{align}
 Thus
 \begin{equation}
  \limsup_{t \to \infty} \frac{1}{t} \| \rho_t \|_{\dot{H}_{\log}}^2 \leq C(d) \int_{\T^d} \log(\eps + |\apD (X_{1}^{-1})(x)|) \, dx + \eps^{\frac{p-1}{p}} \left( \| g \|_{L^p(\T^d)}^p + 1 \right).
 \end{equation}
 Since $\eps$ is arbitrary, we find
 \begin{equation}\label{eq:AsympRegProofRLFConclusionTimeOne}
  \limsup_{t \to \infty} \frac{1}{t} \| \rho_t \|_{\dot{H}_{\log}}^2 \leq C(d) \int_{\T^d} \log |\apD (X_{1}^{-1})(x)| \, dx.
 \end{equation}
 To finish the proof, we rescale time. Replacing $b$ by $(t,x) \mapsto N b(Nt, x)$, the time-one flow becomes the time-$N$ flow of $b$ i.e. $X_N$ and the solution to \eqref{eq:TransportEquation} becomes $(t,x) \mapsto \rho_{Nt}(x)$. Thus by \eqref{eq:AsympRegProofRLFConclusionTimeOne},
 \begin{equation}
 N \limsup_{t \to \infty} \frac{1}{t} \| \rho_t \|_{\dot{H}_{\log}}^2= \limsup_{t \to \infty} \frac{1}{t} \| \rho_{Nt} \|_{\dot{H}_{\log}}^2 \leq C(d) \int_{\T^d} \log |\apD (X_{N}^{-1})(x)| \, dx.
 \end{equation}
 Dividing by $N$ and letting $N \to \infty$
 \begin{equation}
     \limsup_{t \to \infty} \frac{1}{t} \| \rho_t \|_{\dot{H}_{\log}}^2 \leq C(d) \limsup_{N \to \infty}  \int_{\T^d} \dfrac{1}{N} \log |\apD (X_{N}^{-1})(x)| \, dx = C(d) \int_{\T^d} \lambda_{\max}(x) dx.
 \end{equation}
 The last inequality can be obtained by dominated convergence exploiting the maximal ergodic function together with Proposition~\ref{prop:ApproxDiffPlusBoundPlusSmoothCovering} or Proposition~\ref{prop:ChainRuleForMPS} and Kingman's sub-additive ergodic theorem \cite[Theorem 3.3.3]{MV16}.

\subsection{Proof of Theorem~\ref{thm:AsymptoticMixingMPS} and Theorem~\ref{thm:RLFAsymptoticMixingThm}}\label{subsec:AsympMixingGeneralAsymptoticMixingThmProof}

The key ingredient leading to Equation~\eqref{eq:FunctionalMixingInThmForMaps} is the following interpolation inequality, which follows from \cite[Corollary 3.7]{EBQHN21}. Although the result is stated there in $\R^d$, the proof carries over identically to the torus $\T^d$.

\begin{lemma}\label{lemma:MixingBoundByIntegral}
For all $f \in L^2(\T^d)$, it holds that
 \begin{equation}
  \log \left( 2 + \dfrac{\| f \|_{L^2(\T^d)}}{\| f \|_{\dot{H}^{-1}(\T^d)}} \right) \| f \|_{L^2(\T^d)}^2 \leq C(d) \| f \|_{\dot H_{\rm log}}^2.
 \end{equation}
\end{lemma}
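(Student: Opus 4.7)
The plan is Fourier-analytic and reduces the lemma to a pointwise multiplier bound plus a Jensen-type inequality. By Plancherel on $\T^d$,
\begin{equation*}
\|f\|_{\dot H_{\log}}^2 = \sum_{k \in \Z^d} |\hat f(k)|^2\, m(k), \qquad m(k) := \int_{B_{1/5}(0)} \frac{|e^{2\pi i k\cdot h}-1|^2}{|h|^d}\, dh,
\end{equation*}
so it suffices to establish the pointwise multiplier estimate
\begin{equation*}
m(k) \;\ge\; c(d)\log(2+|k|) \qquad \text{for every } k \in \Z^d\setminus\{0\},
\end{equation*}
and combine it with Jensen's inequality applied to the $\dot H^{-1}$ sum.

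For the multiplier bound, I would change variables $h = y/|k|$ to rewrite $m(k) = \int_{B_{|k|/5}(0)} |y|^{-d}|e^{2\pi i \omega\cdot y}-1|^2\, dy$ with $\omega := k/|k|$. For $|k|\ge 5$, restricting to the annulus $\{1 \le |y|\le |k|/5\}$ and passing to polar coordinates reduces matters to a uniform-in-$r\ge 1$ lower bound on the spherical average $A(r) := \int_{S^{d-1}}\sin^2(\pi r\,\omega\cdot\theta)\, d\sigma(\theta)$. Such a bound holds because the complementary integral $B(r) := \int_{S^{d-1}}\cos(2\pi r\,\omega\cdot\theta)\, d\sigma(\theta)$ is continuous in $r$, strictly less than $|S^{d-1}|$ for every $r>0$ (a.e. $\theta\mapsto \cos(2\pi r\omega\cdot\theta)$ is not identically $1$), and decays to $0$ as $r\to\infty$ by standard Bessel/stationary-phase asymptotics; hence $\sup_{r\ge 1} B(r) < |S^{d-1}|$ and $\inf_{r\ge 1} A(r) > 0$. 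Integrating the resulting $c_d/r$ from $1$ to $|k|/5$ yields $m(k) \ge c(d)\log|k|$, and for $1\le |k|\le 5$ the bound is immediate from continuity of $m$ on $\R^d$ and its strict positivity on $\Z^d\setminus\{0\}$.

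Both $\|f\|_{\dot H_{\log}}$ and $\|f\|_{\dot H^{-1}}$ ignore the mean of $f$, so the lemma is substantive only for mean-free $f$, in which case both sides are $2$-homogeneous in $f$. I therefore reduce to $\hat f(0)=0$ and $\|f\|_{L^2}=1$. Setting $a_k := |\hat f(k)|^2$, so that $(a_k)_{k\ne 0}$ is a probability measure on $\Z^d\setminus\{0\}$, Jensen's inequality for the concave function $\log$ gives
\begin{equation*}
\log\|f\|_{\dot H^{-1}}^2 = \log \sum_{k\neq 0} a_k |k|^{-2} \;\ge\; -2\sum_{k\neq 0} a_k \log|k|,
\end{equation*}
and therefore
\begin{equation*}
-\log\|f\|_{\dot H^{-1}} \;\le\; \sum_{k\neq 0} a_k \log(2+|k|) \;\le\; C(d)\,\|f\|_{\dot H_{\log}}^2
\end{equation*}
by the multiplier bound. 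Since $\|f\|_{\dot H^{-1}}\le\|f\|_{L^2}=1$, one has $\log\bigl(2 + 1/\|f\|_{\dot H^{-1}}\bigr) \le \log 3 - \log\|f\|_{\dot H^{-1}}$, and the additive $\log 3$ is absorbed into the right-hand side using $\|f\|_{\dot H_{\log}}^2 \ge c(d)\log 3$, itself an immediate consequence of the multiplier bound and $\sum_{k\neq 0} a_k = 1$. Undoing the normalization proves the claim.

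The only genuinely delicate point is the multiplier estimate, which requires a careful but standard computation of the angular integral (the continuity/decay of $B(r)$); everything else is just Jensen applied to the $\dot H^{-1}$ expansion.
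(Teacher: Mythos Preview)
Your proof is correct. The paper does not supply its own argument for this lemma: it simply cites \cite[Corollary 3.7]{EBQHN21} and remarks that the proof there (stated in $\R^d$) carries over to the torus. Your Fourier-analytic route---Plancherel to reduce to the multiplier lower bound $m(k)\ge c(d)\log(2+|k|)$, then Jensen applied to the probability weights $a_k=|\hat f(k)|^2$ in the $\dot H^{-1}$ expansion---is clean, self-contained, and arguably the most natural approach on $\T^d$.

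Two small points are worth making explicit. First, your observation that the lemma is only meaningful for mean-free $f$ is in fact necessary rather than a convenience: for a nonzero constant the left-hand side is infinite while $\|f\|_{\dot H_{\log}}=0$, so the inequality as literally stated fails without that proviso (the paper only ever applies it to mean-free densities). Second, the spherical lower bound $\inf_{r\ge 1}A(r)>0$ genuinely requires $d\ge 2$: in $d=1$ one has $A(r)=1-\cos(2\pi r)$, which vanishes at integers. This is harmless here since the paper assumes $d\ge 2$ throughout, but it is worth flagging.
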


The key ingredient leading to Equation~\eqref{eq:GeometricMixingInThmForMaps} is the following lemma:

\begin{lemma}\label{lemma:GeometricMixingScaleLogRegularity}
 Let $\rho \in L^{\infty} \cap \dot{H}_{\log}(\T^d)$ with $\rho \in \{ -1, 1\}$ a.e. If $\mix_{\kappa}(\rho) \leq 1 / 5$ then
 \[
  \mix_{\kappa}(\rho) \geq C \exp \left( - C(d, \kappa) \| \rho \|_{\dot{H}_{\log}(\T^d)}^2 \right).
 \]
\end{lemma}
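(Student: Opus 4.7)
The plan is to apply Lemma~\ref{lemma:MixingBoundByIntegral} to a high-frequency Fourier truncation of $\rho$, after converting the geometric mixing assumption into a low-frequency Fourier estimate. The key point is that $\rho \in \{-1,1\}$ a.e.\ forces $\|\rho\|_{L^2(\T^d)}^2 = 1$, so once the mixing hypothesis controls the low-frequency part of $\rho$, a substantial high-frequency piece is left to which the logarithmic interpolation inequality applies sharply.

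Set $\eps := \mix_\kappa(\rho) \le 1/5$ and $\varphi_r := \chi_{B_r}/|B_r|$. The assumption gives $\|\rho \ast \varphi_r\|_{L^\infty(\T^d)} \le \kappa$ for every $r > \eps$, and Plancherel then yields $\sum_k |\hat\rho(k)|^2 |\hat\varphi_r(k)|^2 \le \kappa^2$. Since $B_r$ is symmetric, a direct Taylor estimate gives $\hat\varphi_r(k) \ge 1 - 2\pi^2 |k|^2 r^2$, so after choosing $\delta := (1-\kappa)/2$, $c_\delta := \sqrt{\delta/(2\pi^2)}$, and setting $\eta := 1 - \kappa^2/(1-\delta)^2 > 0$ (a quantity depending only on $\kappa$), letting $r \downarrow \eps$ gives
\begin{equation*}
\sum_{|k| \le c_\delta/\eps} |\hat\rho(k)|^2 \le 1-\eta, \qquad \sum_{|k| > c_\delta/\eps} |\hat\rho(k)|^2 \ge \eta,
\end{equation*}
the second inequality using $\|\rho\|_{L^2}^2 = 1$.

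Define the high-frequency truncation $f$ of $\rho$ by $\hat f(k) := \hat\rho(k)$ if $|k| > c_\delta/\eps$ and $\hat f(k) := 0$ otherwise. Then $\|f\|_{L^2}^2 \ge \eta$; using $|k|^{-2} \le (\eps/c_\delta)^2$ on the support of $\hat f$, we get $\|f\|_{\dot H^{-1}} \le \eps/c_\delta$; and from the Fourier representation of $\dot H_{\log}$ combined with $|\hat f(k)| \le |\hat\rho(k)|$, also $\|f\|_{\dot H_{\log}} \le \|\rho\|_{\dot H_{\log}}$. Applying Lemma~\ref{lemma:MixingBoundByIntegral} to $f$ then yields
\begin{equation*}
\eta \log\!\left( \frac{c_\delta \sqrt{\eta}}{\eps} \right) \le \|f\|_{L^2}^2 \log\!\left( 2 + \frac{\|f\|_{L^2}}{\|f\|_{\dot H^{-1}}} \right) \le C(d)\,\|f\|_{\dot H_{\log}}^2 \le C(d)\,\|\rho\|_{\dot H_{\log}}^2,
\end{equation*}
and rearranging produces $\eps \ge c_\delta\sqrt{\eta}\,\exp(-(C(d)/\eta)\|\rho\|_{\dot H_{\log}}^2)$ in the regime $\eps \le c_\delta\sqrt\eta$; the complementary regime $\eps > c_\delta\sqrt\eta$ gives the conclusion trivially after enlarging $C(d,\kappa)$.

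The computation is an essentially elementary Fourier argument once the Taylor bound on $\hat\varphi_r$ is in hand; the only delicate point is calibrating $\delta$ against $\kappa$ so that $\eta > 0$. This forces $\delta < 1-\kappa$, and hence $c_\delta \to 0$, $\eta \to 0$ as $\kappa \uparrow 1$, explaining the expected blowup of $C(d,\kappa)$ in that regime.
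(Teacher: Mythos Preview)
Your argument is correct and takes a genuinely different route from the paper. The paper works entirely in physical space: from $\mix_\kappa(\rho)=\eps$ it deduces $1-\kappa\le \fint_{B_r(x)}|\rho(z)-\rho(x)|\,dz$ for every $r>\eps$, integrates this in $r$ over $[\eps,1/5]$ with weight $r^{-1}$ to produce $\log(1/(5\eps))\le C(d,\kappa)\int_{B_{1/5}(x)}\frac{|\rho(z)-\rho(x)|}{|z-x|^d}\,dz$, and then invokes the identity $|\rho(z)-\rho(x)|^2=2|\rho(z)-\rho(x)|$ (valid because $\rho\in\{-1,1\}$) to turn the right-hand side into $\|\rho\|_{\dot H_{\log}}^2$ after integrating in $x$. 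Your approach instead passes through Fourier space and recycles Lemma~\ref{lemma:MixingBoundByIntegral}: the mixing hypothesis caps the low-frequency mass of $\rho$, the constraint $\|\rho\|_{L^2}=1$ forces a fixed amount of high-frequency mass, and applying the interpolation inequality to the high-frequency projection yields the bound. The paper's argument is self-contained in physical space and uses the two-valuedness of $\rho$ through the pointwise identity $|\rho(z)-\rho(x)|^2=2|\rho(z)-\rho(x)|$; your argument uses the two-valuedness only through $\|\rho\|_{L^2}=1$ and has the virtue of exhibiting the geometric mixing bound as a formal consequence of the functional one (Lemma~\ref{lemma:MixingBoundByIntegral}), at the cost of needing the Fourier multiplier description of $\dot H_{\log}$ to compare $\|f\|_{\dot H_{\log}}$ with $\|\rho\|_{\dot H_{\log}}$. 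One cosmetic point: after letting $r\downarrow\eps$ your low-frequency estimate is really $\sum_{|k|<c_\delta/\eps}|\hat\rho(k)|^2\le 1-\eta$, so the high-frequency truncation should be over $|k|\ge c_\delta/\eps$; this changes nothing in the subsequent estimates.
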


\begin{proof}
 Denote $\eps = \mix_{\kappa}(\rho)$ and observe that for all $r > \eps$ we have
 \begin{equation}
  1 - \kappa \leq \fint_{B_r(x)} |\rho(z) - \rho(x)| \, dz \quad \forall x \in \T^d.
 \end{equation}
 Hence, for any $x \in \T^d$, we have
 \begin{equation}
  C(d) (1 - \kappa) r^{-1} \leq \frac{1}{r^{d+1}} \int_{B_r(x)} |\rho(z) - \rho(x)| \, dz \quad \forall r > \eps.
 \end{equation}
 Integrating from $\eps$ to $1 / 5$ with respect to $r$, we find
 \begin{equation}\label{eq:InProofOfGeometricMixingScaleLogRegularity-A}
  C(d, \kappa) (\log(1/5) - \log(\eps)) \leq \int_{\eps}^{1/ 5} \frac{1}{r^{d+1}} \int_{B_r(x)} |\rho(z) - \rho(x)| \, dz \, dr.
 \end{equation}
 The right-hand side of Equation~\eqref{eq:InProofOfGeometricMixingScaleLogRegularity-A} equals
 \begin{equation}
  \underbrace{\int_{\eps}^{1/ 5} \frac{1}{r^{d+1}} \int_{B_{\eps}(x)} |\rho(z) - \rho(x)| \, dz \, dr}_{=: I} + \underbrace{\int_{\eps}^{1/ 5} \frac{1}{r^{d+1}} \int_{\eps}^{r} \int_{\partial B_{s}(x)} |\rho(z) - \rho(x)| \, d\Haus^{d-1}(z) \, ds \, dr}_{=: II}.
 \end{equation}
 For these two terms, we have
 \begin{align*}
 I &\leq C(d) \int_{B_{\eps}(x)} \frac{|\rho(z) - \rho(x)|}{\eps^d} \, dz \leq C(d) \int_{B_{\eps}(x)} \frac{|\rho(z) - \rho(x)|}{|z-x|^d} \, dz; \\
 II &= \int_{\eps}^{1 / 5} \int_{s}^{1/5} \frac{1}{r^{d+1}} \int_{\partial B_{s}(x)} |\rho(z) - \rho(x)| \, d\Haus^{d-1}(z) \, dr \, ds \leq C(d) \int_{\eps}^{1 / 5} \int_{\partial B_{s}(x)} \frac{|\rho(z) - \rho(x)|}{s^d} \, d\Haus^{d-1}(z) \, ds.
 \end{align*}
 Thus, by Equation~\eqref{eq:InProofOfGeometricMixingScaleLogRegularity-A} 
 \begin{equation}\label{eq:InProofOfGeometricMixingScaleLogRegularity-B}
  \log \left( \frac{1}{5 \eps} \right) \leq C(d, \kappa) (I + II) \leq C(d, \kappa) \int_{B_{1/5}(x)} \frac{|\rho(z) - \rho(x)|}{|z-x|^d} \, dz.
 \end{equation}
 Since $\rho \in \{ -1, 1\}$ a.e., we have $|\rho(z) - \rho(x)|^2 = 2 |\rho(z) - \rho(x)|$ $\Leb^{2d}$-a.e. and thus integrating Equation~\eqref{eq:InProofOfGeometricMixingScaleLogRegularity-B} with respect to $x$ yields
  \begin{equation}
  \log \left( \frac{1}{5 \eps} \right) \leq C(d, \kappa) \| \rho \|_{\dot{H}_{\log}}^2.
 \end{equation}
 Taking the exponential finishes the proof.
\end{proof}

\begin{proof}[Proof of Theorem~\ref{thm:AsymptoticMixingMPS}]
From Lemma \ref{lemma:MixingBoundByIntegral} and Theorem \ref{thm:AsymptoticRegularityMPS}, we get 
\begin{equation}
\begin{split}
\limsup_{n \to \infty} \frac{1}{n} \log \left( 2 + \dfrac{\| \rho \circ T^n \|_{L^2(\T^d)}}{\| \rho \circ T^n \|_{\dot{H}^{-1}(\T^d)}} \right) \| &\rho \circ T^n \|_{L^2(\T^d)}^2 
 \leq C(d) \limsup_{n \to \infty} \frac{1}{n} \| \rho \circ T^n\|_{\dot{H}_{\log}}^2
\\& \le C(d) \| \rho \|_{L^\infty}^2 \int_{\T^d} \lambda_{\rm max}(x) dx.
\end{split}
\end{equation}
Since $\|\rho\circ T^n \|_{L^2} = \| \rho \|_{L^2}$,  
Equation~\eqref{eq:FunctionalMixingInThmForMaps} follows.
Lemma~\ref{lemma:GeometricMixingScaleLogRegularity} implies
\begin{equation}
 \log \left( \dfrac{ \mix_{\kappa}(\rho \circ T^n)}{C} \right) \geq - C(d, \kappa) \| \rho \circ T^n \|_{\dot{H}_{\log}(\T^d)}^2
\end{equation}
Dividing by $n$ and taking the $\liminf$ as $n \to \infty$, we deduce Equation~\eqref{eq:GeometricMixingInThmForMaps} by Theorem~\ref{thm:AsymptoticRegularityMPS}.
\end{proof}

A completely analogous argument applies also to Theorem~\ref{thm:RLFAsymptoticMixingThm}.

\subsection{Proof of Proposition~\ref{prop:KeyProposition}}\label{subsec:AsympMixingAsymptoticMixingThmPropositionProof}
We will use the following result from \cite[Proposition 2.16]{EBQHN21}. Although it is stated for $\R^d$ in \cite{EBQHN21}, the proof carries over identically to the torus $\T^d$.

\begin{proposition}\label{prop:LusinLipBoundGivesBoungOnIntegral}
 Let $f \in L^1\cap L^{\infty}(\T^d)$ and $g \in L^1(\T^d)$ satisfy
 \begin{equation}
  |f(x) - f(y)| \leq \e^{g(x) + g(y)} d(x,y) \quad \forall x,y \in \T^d.
 \end{equation}
 Then 
 \begin{equation}
     \int_{\mathbb{T}^d} \left( \int_{B_{1/5}(0)} \dfrac{1 \wedge |f(x+h) - f(x)|^2}{|h|^d} dh\right) dx
     \le C(d)\left( \| g \|_{L^1(\T^d)} + \| f \|_{L^1(\T^d)} \right).
 \end{equation}
 In particular, if $\| f \|_{L^\infty} \le 1$, then
 $\| f \|_{\dot{H}_{\log}}^2 \leq C(d) \left( \| g \|_{L^1(\T^d)} + \| f \|_{L^1(\T^d)} \right)$.
\end{proposition}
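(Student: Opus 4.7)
The plan is to prove Proposition~\ref{prop:LusinLipBoundGivesBoungOnIntegral} via a layer-cake decomposition of $1\wedge a^2$, combined with two complementary pointwise bounds on $|f(x+h)-f(x)|$. First, I would apply the identity $1\wedge a^2 = 2\int_0^1 s\,\mathbb{1}_{a>s}\,ds$ together with Fubini to rewrite
\begin{equation*}
I := \int_{\T^d}\int_{B_{1/5}(0)} \frac{1\wedge |f(x+h)-f(x)|^2}{|h|^d}\,dh\,dx = 2\int_0^1 s\,J(s)\,ds,
\end{equation*}
where $J(s) := \int_{\T^d}\int_{B_{1/5}(0)} |h|^{-d}\,\mathbb{1}_{|f(x+h)-f(x)|>s}\,dh\,dx$. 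Next I would bound the indicator in two complementary ways: the Lusin--Lipschitz hypothesis combined with the elementary $\mathbb{1}_{a+b\ge c}\le \mathbb{1}_{a\ge c/2}+\mathbb{1}_{b\ge c/2}$ (valid for $a,b\ge 0$) yields $\mathbb{1}_{|f(x+h)-f(x)|>s} \le \mathbb{1}_{g(x)\ge \frac{1}{2}\log(s/|h|)} + \mathbb{1}_{g(x+h)\ge \frac{1}{2}\log(s/|h|)}$, while the triangle inequality yields $\mathbb{1}_{|f(x+h)-f(x)|>s} \le \mathbb{1}_{|f(x)|\ge s/2} + \mathbb{1}_{|f(x+h)|\ge s/2}$.

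For $s \in (0, 1/5)$ I would split the inner $h$-integral at the scale $|h|=s$, applying the Lusin--Lipschitz-based bound on $\{|h|<s\}$ and the triangle-inequality bound on $\{s\le |h| < 1/5\}$. On the first region, the $g(x)$-indicator forces $|h|\in(se^{-2g(x)}, s)$, so the radial integration $\int_{se^{-2g(x)}}^s r^{-1}\,dr = 2g(x)$ contributes (after multiplication by the surface area $c(d)$ and integration over $x$) at most $2c(d)\|g\|_{L^1}$; the symmetric $g(x+h)$-contribution is handled by the translation-invariance change of variable $y=x+h$, yielding a further $2c(d)\|g\|_{L^1}$. On the second region, the $h$-integration gives $\int_s^{1/5} r^{-1}\,dr = \log(1/(5s))$, and Chebyshev's inequality provides $|\{|f|\ge s/2\}|\le 2\|f\|_{L^1}/s$, so this piece contributes $\le C(d)\,s^{-1}\log(1/(5s))\,\|f\|_{L^1}$. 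For $s \in (1/5, 1)$, the inclusion $B_{1/5}(0)\subseteq \{|h|<s\}$ makes the triangle-inequality bound useless (its radial integral diverges), so I would apply only the Lusin--Lipschitz bound on the full $B_{1/5}(0)$; the analogous radial integration $\int_{se^{-2g(x)}}^{1/5} r^{-1}\,dr = 2g(x)-\log(5s)$ (valid when $g(x) > \log(5s)/2$) and the crude bound $(2g(x)-\log(5s))_+ \le 2g(x)$ produce $J(s) \le C(d)\|g\|_{L^1}$.

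Finally, combining these pieces, $I = 2\int_0^1 s\,J(s)\,ds$ splits into contributions controlled by the elementary integrals $\int_0^{1/5} s\,ds = 1/50$, $\int_0^{1/5}\log(1/(5s))\,ds = 1/5$, and $\int_{1/5}^1 s\,ds = 12/25$, all of which are finite, producing the desired $I \le C(d)(\|g\|_{L^1} + \|f\|_{L^1})$. The ``in particular'' clause then follows: if $\|f\|_{L^\infty} \le 1$, one verifies case-by-case that $|f(x+h)-f(x)|^2 \le 4(1\wedge |f(x+h)-f(x)|^2)$, hence $\|f\|_{\dot{H}_{\log}}^2 \le 4I$. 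The main technical subtlety I expect is the matching of the two bounds at the crossover scale $|h|\sim s$: the Lusin--Lipschitz bound alone fails to extract an $\|f\|_{L^1}$ contribution (and gives nothing when $g\equiv 0$ even though $I$ can be positive), while the triangle bound alone produces a divergent $\int |h|^{-d}\,dh$ near $h=0$, so both decompositions are genuinely needed and the cut at $|h|=s$ must be calibrated precisely so that the logarithmic weight $\log(1/(5s))/s$ is integrable against the layer-cake weight $s$.
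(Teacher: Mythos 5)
Your argument is correct. Note first that the paper does not actually prove this proposition: it is quoted from \cite[Proposition 2.16]{EBQHN21} with the remark that the proof carries over from $\R^d$ to $\T^d$, so what you have produced is a self-contained substitute for that citation. Your layer-cake decomposition $1\wedge a^2 = 2\int_0^1 s\,\mathbb{1}_{a>s}\,ds$, combined with the two complementary indicator bounds and the cut of the $h$-integral at $|h|=s$, closes correctly: the radial integral $\int_{se^{-2g(x)}}^{s} r^{-1}\,dr = 2g(x)$ delivers the $\|g\|_{L^1}$ term, Chebyshev plus $\int_0^{1/5}\log(1/(5s))\,ds = 1/5$ delivers the $\|f\|_{L^1}$ term, and the regime $s\in(1/5,1)$ is handled by the crude bound $(2g(x)-\log(5s))_+\le 2g(x)$. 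Two small points you should make explicit. First, you may and should assume $g\ge 0$ without loss of generality (replace $g$ by $g^+$; the hypothesis is preserved and $\|g^+\|_{L^1}\le\|g\|_{L^1}$), since otherwise the lower limit $se^{-2g(x)}$ of your radial integral exceeds $s$ and the identity $\int_{se^{-2g(x)}}^{s}r^{-1}\,dr=2g(x)$ is meaningless (though in that case the indicator set is empty and the contribution is zero, so nothing breaks). Second, you implicitly use $\metric(x,x+h)\le|h|$ to pass from the geodesic-distance hypothesis to the Euclidean $|h|$ appearing in the integrand; this is true for $|h|<1/5$ on the flat torus and is the tacit lift to the universal cover that the paper mentions in its notation section, but it is worth one line. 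With these two remarks added, the proof is complete and the computation of all the elementary integrals is accurate, including the verification $a^2\le 4(1\wedge a^2)$ for $a\in[0,2]$ in the final clause.
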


Let $\eps > 0$ be arbitrary as in the statement of Proposition~\ref{prop:KeyProposition}. 
By Lemma~\ref{lemma:UsefulLemmaAboutTheGradientBeingAGoodApprox}, there exists a set $A \subseteq \T^d$ such that $\Leb^d (\T^d \setminus  A) < \eps$ and for all $x,y \in A$ with $|x-y| < \delta$, we have
\begin{equation}
 |T(y) - T(x) - \apD T(x) (y-x)| < \eps |x-y|.
\end{equation}
Since $\rho \in \BV(\T^d)$, there exists $h \in L^1(\T^d)$ with $\| h \|_{L^1(\T^d)} \leq C(d) \| \rho \|_{\BV(\T^d)} + 1$ such that $|\rho(x) - \rho(y)| \leq \e^{h(x) + h(y)} d(x,y)$ for all $x,y \in \T^d$. 
Thus,
\begin{equation}\label{eq:AsympMixingPropClaimFirstBoundUsingBV}
 |(\rho \circ T^n \circ S)(x) - (\rho \circ T^n \circ S)(y)| \leq \e^{h(T^n(S(x))) + h(T^n(S(y)))} \metric((T^n \circ S)(x) , (T^n \circ S)(y)) \quad \forall x,y \in \T^d.
\end{equation}
We now claim the following: 
\\
\textbf{Claim: }
For all $n \geq 1$, there exists $\ell_n \in L^1(\T^d)$ with 
\begin{equation}\label{eq:AsympMixingPropClaimEquationOne}
 \| \ell_n \|_{L^1(\T^d)} \leq \log(\sfrac{d}{\delta}) + n\eps + n \int_{\T^d} \log |\apD T(x)| \, dx + \eps^{\frac{p-1}{p}} n \left( \| g \|_{L^p(\T^d)}^p + 1 \right)
\end{equation}
such that
\begin{equation}\label{eq:AsympMixingPropClaimEquationTwo}
 \metric(T^n (x) , T^n (y)) \leq \e^{\ell_n(x) + \ell_n(y)} \metric(x,y) \quad \forall x,y \in \T^d.
\end{equation}
The proof of the claim is postponed.
From the claim and \eqref{eq:AsympMixingPropClaimFirstBoundUsingBV}, we deduce
\begin{align*}
 &|(\rho \circ T^n \circ S)(x) - (\rho \circ T^n \circ S)(y)| \\
 &\qquad \leq \exp( h(T^n(S(x))) + h(T^n(S(y))) + \ell_n(S(x)) + \ell_n(S(y)) + \tilde{g}(x) + \tilde{g}(y)) \metric(x,y) \quad \forall x,y \in \T^d.
\end{align*}
From Proposition~\ref{prop:LusinLipBoundGivesBoungOnIntegral}, it follows that
\begin{align*}
 \| \rho \circ T^n \circ S \|_{\dot{H}_{\log}}^2 
 &\leq C(d) \| h \|_{L^1(\T^d)} + \| \ell_n \|_{L^1(\T^d)} + \| \tilde{g} \|_{L^1(\T^d)} + \| \rho \|_{L^1(\T^d)} \\
 &\leq C(d) \| \rho \|_{\BV(\T^d)} + \| \ell_n \|_{L^1(\T^d)} + \| \tilde{g} \|_{L^1(\T^d)} + C(d).
\end{align*}
Plugging in Equation \eqref{eq:AsympMixingPropClaimEquationOne} gives \eqref{eq:AsymptoticMixingThmPropositionEquation} and hence the proof is finished.
It only remains to prove the claim. 
\begin{proof}[Proof of Claim.]
Define $\Theta \colon \T^d \times \T^d \to \R$ by
 \begin{equation}
  \Theta(x,y)
  =
  \left\{
  \begin{array}{ll}
   \eps + |\apD T(x)| & \text{if $x,y \in A$;} \\
   \e^{g(x) + g(y)} & \text{otherwise.}
  \end{array}
  \right.
 \end{equation}
 We note that $d(T(x) , T(y)) \leq \Theta(x,y) d(x,y)$ for all $x,y \in \T^d$ with $|x-y| < \delta$.
 For any integer $n \geq 1$, we define $\Theta_n \colon \T^d \times \T^d \to \R$ as
 \begin{equation}
  \Theta_n(x,y) = \prod_{j = 0}^{n-1} \Theta(T^j(x), T^j(y)).
 \end{equation}
 Note that for fixed $x,y \in \T^d$, $ \Theta_n(x,y)$ is increasing in $n$.
 We observe that if $\metric(T^j(x) , T^j(y)) < \delta$ for all $j = 0, \ldots, n-1$, then
 \begin{equation}\label{eq:AsympMixingPropClaimBoundOnClosePoints}
  \metric(T^n(x) , T^n(y)) \leq \Theta_n(x,y) \metric(x,y) \quad \forall x,y \in \T^d.
 \end{equation}
 Motivated by this observation, we define a function $r_n \colon \T^d \times \T^d \to \R$ by $r_n(x,y) = \frac{\delta}{ \Theta_n(x,y)}$. From the observation above, we deduce that if $\metric(x,y) < r_n(x,y)$, then $\metric(T^n(x) , T^n(y)) \leq \Theta_n(x,y) \metric(x,y)$. Moreover, if $d(x,y) > r_n(x,y)$ then
 \begin{equation}\label{eq:AsympMixingPropClaimBoundOnDistantPoints}
  \metric(T^n(x) , T^n(y)) \leq d \leq d \frac{\metric(x,y)}{r_n(x,y)} \leq \frac{d}{\delta} \Theta_n(x,y) \metric(x,y).
 \end{equation}
 Hence, combining \eqref{eq:AsympMixingPropClaimBoundOnClosePoints} and \eqref{eq:AsympMixingPropClaimBoundOnDistantPoints}, we find
 \begin{equation}
   \metric(T^n(x) , T^n(y)) \leq \frac{d}{\delta} \Theta_n(x,y) \metric(x,y) \quad \forall x,y \in \T^d.
 \end{equation}
 In order to find a function $\ell_n$ as in Equation~\eqref{eq:AsympMixingPropClaimEquationTwo}, we now provide an upper bound for $\log \Theta_n(x,y)$ with a function of the form $\ell_n(x)+ \ell_n(y)$ for all $x,y \in \T^d$.
 For any $x \in \T^d$, we define the set $S_n(x)$ as 
 \begin{equation}
  S_n(x) = \{ 0 \leq j \leq n-1 : T^j(x) \in A \} \subseteq \N_{\geq 0}.
 \end{equation}
 Then,
 \begin{align}
 \begin{split}\label{eq:AsympMixingPropClaimBoundOnLogTheta}
  \log \Theta_n(x,y) &= \sum_{j = 0}^{n-1} \log \Theta (T^j(x), T^j(y)) \\
   &= \sum_{j \in S_n(x) \cap S_n(y)} \log \Theta (T^j(x), T^j(y)) + \sum_{j \in S_n(x)^c \cup S_n(y)^c} \log \Theta (T^j(x), T^j(y))  \\
   &= \sum_{j \in S_n(x) \cap S_n(y)} \log (\eps + |\apD T(T^j (x))|) + \sum_{j \in S_n(x)^c \cup S_n(y)^c} \left( g(T^j (x)) + g(T^j (y)) \right) \\
 &\leq \sum_{j = 0}^{n-1} \log (\eps + |\apD T(T^j (x))|) + \sum_{j = 0}^{n-1} \left( g(T^j (x)) + g(T^j (y)) \right) \left( \mathbbm{1}_{A^c}(T^j (x)) + \mathbbm{1}_{A^c}(T^j (y)) \right).
 \end{split}
 \end{align}
 For the each term in the last sum, by Young's inequality we have for all $j=0,...,n-1$
 \begin{align*}
     &\left( g(T^j (x)) + g(T^j (y)) \right) \left( \mathbbm{1}_{A^c}(T^j (x)) + \mathbbm{1}_{A^c}(T^j (y)) \right) \\
     &\qquad \leq \eps^{\beta p} \left( g(T^j (x)) + g(T^j (y)) \right)^p + \eps^{-\beta p^{\prime}} \left( \mathbbm{1}_{A^c}(T^j (x)) + \mathbbm{1}_{A^c}(T^j (y)) \right)^{p^{\prime}} \\
     &\qquad \leq C(p) \Big( \eps^{\beta p} g^p(T^j (x)) + \eps^{\beta p} g^p(T^j (y)) + \eps^{-\beta p^{\prime}} \mathbbm{1}_{A^c}(T^j (x)) + \eps^{-\beta p^{\prime}} \mathbbm{1}_{A^c}(T^j (y)) \Big).
 \end{align*}
where $\beta \in \R$ is still to be selected (depending on $p$).
 Hence, with $\ell_n \colon \T^d \to \R$ defined by
 \begin{equation}
  \ell_n(x) = \log(d/\delta) + \sum_{j = 0}^{n-1} \Big( \log (\eps + |\apD T(T^j (x))|) + C(p)\eps^{\beta p} g^p(T^j (x)) + C(p)\eps^{- \beta p^{\prime}}  \mathbbm{1}_{A^c}(T^j (x)) \Big)
 \end{equation}
 we have by the previous two inequalities
 \[
 \log \Theta_n(x,y) \leq \ell_n(x) + \ell_n(y).
 \]
 By \eqref{eq:AsympMixingPropClaimBoundOnClosePoints} and the previous inequality we get that $\metric(T^n(x) , T^n(y)) \leq \e^{\ell_n(x) + \ell_n(y)} \metric(x,y)$ for all $x,y \in \T^d$, namely. 
 It only remains to estimate the $L^1$ norm of $\ell_n$. We have
 \begin{align*}
  \| \ell_n \|_{L^1} 
 &\leq \log(\sfrac{d}{\delta}) + n \int_{\T^d} \log(\eps + |\apD T(x)|) \, dx  + C(p) \sum_{j = 0}^{n-1} \Big( \eps^{\beta p} \left\| g^p \circ T^j \right\|_{L^1} + \eps^{- \beta p^{\prime}} \left\| \mathbbm{1}_{A^c} \circ T^j \right\|_{L^1} \Big).
 \end{align*}
We estimate  the second addend in the right-hand side using the fact that $\log(x + y) \leq x + \log y$ whenever $x \geq 0$ and $y \geq 1$. For the two last addends, we exploit that $T$ is measure-preserving so that by selecting $\beta = \frac{p-1}{p^2}$, we obtain
 \begin{equation}
  \| \ell_n \|_{L^1(\T^d)} \leq \log(\sfrac{d}{\delta}) + n \eps +n \int_{\T^d} \log(|\apD T(x)|) \, dx + \eps^{\frac{p-1}{p}} n \left( \| g \|_{L^p(\T^d)}^p + 1 \right).
  \end{equation}

\end{proof}

\bibliographystyle{plain}
\bibliography{biblio}
 
\end{document}